\newtheorem{theorem}{Theorem}[section]
\newtheorem{lemma}[theorem]{Lemma}
\newtheorem{proposition}[theorem]{Proposition}
\theoremstyle{definition}
\newtheorem{definition}[theorem]{Definition}
\theoremstyle{remark}
\newtheorem{remark}[theorem]{}
\theoremstyle{definition}
\newtheorem*{convention}{Convention}
\newcommand{\C}{\mathcal C}
\newcommand{\F}{\mathcal F}
\renewcommand{\H}{\mathcal H}
\renewcommand{\P}{\mathcal P}
\newcommand{\Q}{\mathcal Q}
\newcommand{\R}{\mathcal R}
\newcommand{\U}{\mathcal U}
\newcommand{\W}{\mathcal W}
\newcommand{\X}{\mathcal X}
\newcommand{\Y}{\mathcal Y}
\newcommand{\Z}{\mathcal Z}
\newcommand{\BB}{\mathbb B}
\newcommand{\CC}{\mathbb C}
\newcommand{\NN}{\mathbb N}
\newcommand{\QQ}{\mathbb Q}
\newcommand{\RR}{\mathbb R}
\newcommand{\CCC}{\mathbf C}
\newcommand{\<}{\langle}
\renewcommand{\>}{\rangle}
\renewcommand{\:}{\colon}
\newcommand{\To}{\rightarrow}
\newcommand{\inv}{^{-1}}
\newcommand{\subsetof}{\subseteq}
\newcommand{\suchthat}{\,|\,}
\newcommand{\oast}{\circledast}
\newcommand{\tensor}{\otimes}
\newcommand{\iso}{\cong}
\newcommand{\Yields}{\;\Rightarrow\;}
\newcommand{\Equivalent}{\;\Leftrightarrow\;}
\newcommand{\Union}{\bigcup}
\newcommand{\union}{\cup}
\newcommand{\comp}{\text{$\sim$}}
\newcommand{\atomof}{\text{\raisebox{1.2pt}{\smaller\,$\propto$\,}}}
\DeclareMathOperator*{\bigast}{\scalerel*{\ast}{\sum}}
\DeclareMathOperator*{\bigoast}{\scalerel*{\circledast}{\bigoplus}}
\newcommand{\Hom}{\mathrm{Hom}}
\newcommand{\Tr}{\mathrm{Tr}}
\newcommand{\At}{\mathrm{At}}
\newcommand{\Set}{\mathbf{Set}}
\newcommand{\qSet}{\mathbf{qSet}}
\newcommand{\qPar}{\mathbf{qPar}}
\newcommand{\Rel}{\mathbf{Rel}}
\newcommand{\qRel}{\mathbf{qRel}}
\newcommand{\FdHilb}{\mathbf{FdHilb}}
\newcommand{\Mstar}[1]{\mathbf{M}^*_{#1}}
\begin{document}

\title{Quantum sets}
\author{Andre Kornell}
\address{Department of Mathematics, University of California, Davis, Davis, California 95616}
\email{kornell@math.ucdavis.edu}

\begin{abstract}
A quantum set is defined to be simply a set of nonzero finite-dimensional Hilbert spaces. Together with binary relations, essentially the quantum relations of Weaver, quantum sets form a dagger compact category. Functions between quantum sets are certain binary relations that can be characterized in terms of this dagger compact structure, and the resulting category of quantum sets and functions generalizes the category of ordinary sets and functions in the manner of noncommutative mathematics. In particular, this category is dual to a subcategory of von Neumann algebras. The basic properties of quantum sets are presented thoroughly, with the noncommutative dictionary in mind, and with an eye to convenient application. As a motivating example, a notion of quantum graph coloring is derived within this framework, and it is shown to be equivalent to the notion that appears in the quantum information theory literature.
\end{abstract}

\maketitle

\section{Introduction}\label{section 1}

This paper concerns the quantum generalization of sets, in the sense of noncommutative geometry. It does not concern the quantization of sets as a primitive mathematical notion, nor the quantization of the familiar cumulative hierarchy described by Zermelo-Fraenkel set theory, as in \cite{Takeuti81} \cite{Ozawa07}. Rather, it concerns the quantization of sets as trivial structures, that is, as structures without structure. The category of quantum sets and functions described here generalizes the category of ordinary sets and functions in the same sense that the opposite of the category of unital C*-algebras and unital $*$-homomorphisms generalizes the category of ordinary compact Hausdorff spaces and continuous maps \cite{GarciaBondiaVarillyFigueroa}. 

Ordinary sets can be identified with discrete topological spaces, with atomic measure spaces, and indeed, with many elementary examples from familiar classes of structures. The identification of sets with discrete topological spaces is particularly significant to noncommutative mathematics, because the quantum generalization of locally compact Hausdorff topological spaces is well established; it is the class of C*-algebras. Likewise, the identification of sets with atomic measure spaces is also particularly significant, because von Neumann algebras are a quantum generalization of suitably well-behaved measure spaces \cite{Fremlin}*{343B}. To simplify the discussion, we consider measure spaces up to equivalence of measure, and up to subsets of measure zero, so that indeed, atomic measure spaces correspond exactly to sets. Thus, up to isomorphism, we expect quantum sets to be in canonical bijective correspondence with a class of von Neumann algebras, whose commutative members are exactly the von Neumann algebras of ordinary atomic measure spaces. Likewise for C*-algebras and ordinary discrete topological spaces.

The class of atomic, or synonymously, fully atomic von Neumann algebras is the obvious candidate for a canonical quantum generalization of atomic measure spaces, but it is not the quantum generalization used in this paper. Recall that a von Neumann algebra is said to be atomic if and only if every nonzero projection is above a minimal nonzero projection. It is well known that a von Neumann algebra is atomic if and only if it is an $\ell^\infty$-direct sum of type I factors \cite{Blackadar17}*{IV.2.2.1}. As a quantum generalization of atomic measure spaces, this class of von Neumann algebras suffers from a number of related problems, each of which takes some time to expound. One such problem is that an ultraweakly closed unital $*$-subalgebra of an atomic von Neumann algebra need not itself be atomic, whereas the quotient of an atomic measure space must be atomic. In the appropriate variant of Gelfand duality, ultraweakly closed unital $*$-subalgebras correspond to quotient measure spaces. The existence of nondiagonalizable self-adjoint operators is another such problem (Theorem \ref{Q}). The quantum generalization of atomic measure spaces that is used in this paper definitionally requires that every ultraweakly closed $*$-subalgebra be atomic. Such a von Neumann algebra is an $\ell^\infty$-direct sum of \emph{finite} type I factors, i.e., of simple matrix algebras.

Essentially the same class of operator algebras arises as the quantum generalization of discrete topological spaces in the theory of locally compact quantum groups. The C*-algebras are the established quantum generalization of locally compact Hausdorff spaces, and locally compact quantum groups are defined to be C*-algebras with additional structure, which specifies a group operation on the Gelfand spectrum of the C*-algebra whenever the C*-algebra is commutative \cite{KustermansVaes00}. Pontryagin duality is a fundamental duality among \emph{abelian} locally compact groups, and research into compact quantum groups has been substantially guided by the goal of generalizing Pontryagin duality to include nonabelian groups. In the context of this generalized duality, the dual of a \emph{nonabelian} locally compact group is a locally compact \emph{quantum} group that is abelian in a suitable sense. It is a basic fact that the Pontryagin dual of an abelian compact group is discrete, and vice versa, and this fact generalizes to the noncommutative setting, naturally yielding a notion of discrete quantum group, and therefore a natural notion of discrete quantum space \cite{DeCommerKasprzakSkalskiSoltan16}. Consequently, a discrete quantum space is defined to be a $c_0$-direct sum of simple matrix algebras.

The same picture of quantum sets emerges in both settings. The {$\ell^\infty$-direct} sum generalizes the disjoint union of well-behaved measure spaces, and the $c_0$-direct sum generalizes the disjoint union of locally compact Hausdorff spaces. Thus, a quantum set apparently consists of atoms, each possessing a positive integer internal dimension. We define a quantum set to be a structure that is fully determined by an ordinary set $\At(\X)$ of nonzero finite-dimensional complex Hilbert spaces. The atomic quantum measure space constructed from $\X$ is the $\ell^\infty$-direct sum 
$\bigoplus_{X \in \At(\X)} L(X)$, and the discrete quantum topological space constructed from $\X$ is the $c_0$-direct sum 
$\bigoplus_{X \in \At(\X)} L(X)$, where $L(X)$ denotes the algebra of all linear operators on a finite-dimensional Hilbert space $X$, which are automatically bounded. We refrain from identifying $\X$ with $\At(\X)$ to avoid ambiguity when we generalize familiar notions from ordinary sets to quantum sets, and to protect the intuition that ordinary sets are a subclass of quantum sets, rather than vice versa. Each ordinary set $S$ can be regarded as a quantum set $`S$ by replacing each of its elements with a one-dimensional atom. 

One of the broader ambitions of this article is to provide a cohesive foundation for noncommutative discrete mathematics, and establishing a uniform system of terminology is a significant part of that goal. Noncommutative mathematics considers many notions that generalize familiar notions to the noncommutative setting, and they all need names. There is a sound and firmly established convention that these names should refer to the notions being generalized, as an aid to intuition and to memory. The modifier ``quantum'' also appears naturally in this context as a way of distinguishing the generalization from the original notion, but often its appearance does more harm than good.

As an example, consider the generalization of the notion ``is equinumerous to''. We briefly suppose that the appropriate generalization of this notion to quantum sets is to equate $\X$ and $\Y$ iff $\sum_{X \in \At(\X)} (\mathrm{dim}(X))^2 = \sum_{Y \in \At(\Y)} (\mathrm{dim}(Y))^2$. In fact, this supposition is well-founded, but because this article is focused on the categorical properties of quantum sets, we cannot give it a worthy defense here, beyond observing that this invariant behaves appropriately for injective and surjective morphisms in the category $\qSet$. However, this proposed generalization illustrates the desirability of two conventions in notation and terminology.

If the equation in the above paragraph holds, shall we say that $\X$ is equinumerous to $\Y$, or that $\X$ is quantum equinumerous to $\Y$? All else being equal, it is preferable to limit the proliferation of the word ``quantum'' in text and speech already saturated with the word. In fact, there is another reason to prefer the unqualified term ``equinumerous'': it implicitly carries the assurance that two ordinary sets are equinumerous in this generalized sense if and only if they are equinumerous in the familiar sense. It is not immediately clear from the terminology that $`\NN$ is not ``quantum equinumerous'' to $`\RR$. After all, there are nonisomorphic simple graphs that are quantum isomorphic. Once we have chosen to use ``equinumerous'' instead of ``quantum equinumerous'', it becomes absolutely necessary to distinguish each quantum set $\X$ from the ordinary set $\At(\X)$ of its atoms. Indeed, quantum sets $\X$ and $\Y$ can fail to be equinumerous even if $\At(\X)$ and $\At(\Y)$ are equinumerous, and vice versa.

\begin{convention}
Use the modifier ``quantum'' when generalizing a class of structures, e.g., quantum sets, quantum groups, quantum graphs, etc. Do not use the modifier ``quantum'' when generalizing a notion that agrees with the ordinary notion on ordinary structures, e.g., equinumerous quantum sets, a homomorphism between quantum groups, a connected quantum graph, etc.
\end{convention}

We will also use the modifier ``quantum'' for variants of ordinary notions that allow some variable to range over a class of quantum structures. For example, the quantum chromatic number of an ordinary graph is the least number of colors for which there exists a quantum family of proper graph colorings (Definition \ref{pseudotelepathy}). 

\subsection{connection to quantum information theory}\label{game} Together with \cite{MustoReutterVerdon}, our category of quantum sets occupies a point of contact between noncommutative mathematics and quantum information theory. One branch of research in quantum information theory considers cooperative games where the two players must coordinate their responses to a referee without communicating with one another. If the two players are allowed to make measurements on quantum systems that are entangled with each other, then they may have a winning strategy even if no winning strategy exists without this aid. This phenomenon is termed ``quantum pseudotelepathy''; its investigation has been surveyed by Brassard, Broadbent, and Tapp \cite{BrassardBroadbentTapp}. They essentially credit Heywood and Redhead with discovering the first such game \cite{HeywoodRedhead}, obtained from the proof of the Kochen-Specker theorem \cite{KochenSpecker}.

We highlight one line of research leading from quantum pseudotelepathy to our quantum sets. It was discovered that one example in quantum communication complexity can be understood as quantum pseudotelepathy for a graph coloring game \cite{BrassardCleveTapp}*{theorem 4} \cite{GalliardWolf} \cite{GalliardTappWolf} \cite{CleveHoyerTonerWatrous}*{4.1}. Such a game is played with a fixed finite simple graph and a fixed finite set of colors, which may or may not suffice to properly color the graph. There are two cooperating players, traditionally named Alice and Bob, and a referee. The referee separates the players, and then queries each player about a vertex on the graph. Both players must respond with a color, and neither player hears the question for, or the answer from, the other player. If the vertices are equal, the players win iff they respond with the same color; if the vertices are adjacent, the players win iff they respond with different colors; if the vertices are neither equal nor adjacent, the players win automatically. These victory conditions are chosen so that a deterministic classical winning strategy for the two players is the same thing as a proper graph coloring. However, if the two players possess quantum systems entangled with each other, then a winning strategy may be available even when no proper graph coloring exists. To implement such a strategy, each player performs a measurement on their quantum system before responding to the referee.

An ordinary graph coloring is a function from vertices to colors, and when a quantum strategy is available, then one such strategy can be described by a family of functions indexed by a quantum set (Proposition \ref{pseudotelepathy}). The first step toward this connection is the result of Cameron, Montanaro, Newman, Severini, and Winter \cite{CameronMontanaroNewmanSeveriniWinter}*{proposition 1} that when a quantum winning strategy is available, there must be a quantum winning strategy that uses \emph{projective} measurement. In our framework, each projective measurement corresponds to a function from a quantum set describing the quantum system, to the classical set of possible outcomes, so in retrospect, this proposition might be viewed as the first appearance of quantum sets in this context. From the perspective of noncommutative mathematics, this result is the quantum analog of the proposition that when a probabilistic classical winning strategy exists, so does a deterministic such strategy.

The next step toward this connection between pseudotelepathy and our quantum sets is taken in the paper of Man\v cinska and Roberson on quantum homomorphisms between ordinary graphs \cite{MancinskaRoberson}. A proper graph coloring can be characterized as a graph homomorphism to the complete simple graph of colors, and the graph coloring game can be generalized to a game for which the deterministic classical strategies are exactly the graph homomorphisms from one given simple graph to another. Man\v cinska and Roberson begin their paper by generalizing the proposition of Cameron et al. to quantum homomorphism games. In doing so, they all but define a quantum homomorphism to be a quantum winning strategy that uses projective measurement \cite{MancinskaRoberson}*{corollary 2.2}. Such a definition is made explicitly by Abramsky, Barbosa, de Silva, and Zapata \cite{AbramskyBarbosaDeSilvaZapata}*{theorem 7}. They define a quantum homomorphism of finite relational structures in terms of projective measurements; their definition is nontrivially compatible \cite{AbramskyBarbosaDeSilvaZapata}*{theorem 16} with the definition implicit in \cite{MancinskaRoberson}*{corollary 2.2}.

The work of Musto, Reutter, and Verdon \cite{MustoReutterVerdon} \cite{MustoReutterVerdon18} cements the analogy. They introduce a category of quantum sets and quantum functions, and define quantum homomorphisms to be functions preserving adjacency \cite{MustoReutterVerdon}*{definition 5.4}. Their quantum sets and their quantum functions are not strictly speaking the same as our quantum sets and functions, but the notions are very closely related. Their quantum sets are essentially our \textit{finite} quantum sets, and our functions are essentially their \textit{one-dimensional} quantum functions. Formally, our category of finite quantum sets and functions is equivalent to their category of quantum sets and one-dimensional quantum functions. 

In fact, even the quantum functions of dimension other than one appear naturally in our framework. A quantum function is essentially a quantum family of functions indexed by an atomic quantum set, which determines the dimension of that quantum function. We describe this bijective correspondence in detail toward the end of this introductory section, after demonstrating the relevance of our framework to the graph coloring game in \mbox{subsection \ref{subsection}}.

\subsection{summary of contents}\label{section 1.2} We begin the development by defining a category $\qRel$ of quantum sets and their binary relations, which is a dagger compact category with biproducts, or synonymously, a strongly compact closed category with biproducts \cite{AbramskyCoecke08}. The category $\mathbf{Rel}$ of ordinary sets and ordinary binary relations can be canonically included into $\qRel$ as a full subcategory. We notate this inclusion $S \mapsto `S$. The usual partial order on binary relations generalizes to binary relations between quantum sets. 

We define a subcategory $\qSet$ of $\qRel$, by directly applying the usual characterization of functions as binary relations satisfying a pair of conditions expressed in terms of the dagger compact structure on $\mathbf{Rel}$. There is a contravariant equivalence between the category of quantum sets and functions, and the category of hereditarily atomic von Neumann algebras and unital normal $*$-homomorphisms; we define a von Neumann algebra to be hereditarily atomic just in case every von Neumann subalgebra is atomic. This contravariant equivalence generalizes the familiar equivalence between ordinary sets and commutative atomic von Neumann algebras, which is defined by $S \mapsto \ell^\infty(S)$.

Our quantum sets can be characterized as sets of nonzero finite-dimensional Hilbert spaces, or equivalently, as hereditarily atomic von Neumann algebras. Analogies between sets and Hilbert spaces, or equivalently, type I factors, are well established in the literature \cite{GilesKummer71} \cite{Schlesinger99} \cite{Weaver01}. The distinguishing feature of our definition is essentially that we allow infinite $\ell^\infty$-direct sums of factors, but restrict to finite type I factors. Our immediate purpose is to obtain a category of quantum sets and binary relations that is dagger compact, and a category of quantum sets and functions that is complete, cocomplete, and closed.

Researchers working with quantum groups have settled on the same notion of discreteness. Specifically, the discrete quantum groups, those locally compact quantum groups that are dual to compact quantum groups, can also be characterized as locally compact quantum groups whose C*-algebras are $c_0$-direct sums of matrix C*-algebras \cite{PodlesWoronowicz90} \cite{EffrosRuan94} \cite{VanDaele96}. A recent paper investigates the action of compact quantum groups on ``discrete quantum spaces'' \cite{DeCommerKasprzakSkalskiSoltan16}, essentially the quantum sets of the present paper. I thank Piotr So\l tan for bringing this exciting connection to my attention.

Our definition of binary relations is essentially that of Weaver \cite{Weaver10}; it arose from his work with Kuperberg on quantum metrics \cite{KuperbergWeaver10}, itself motivated in part by quantum information theory. It is straightforward that the category $\qRel$ of quantum sets and binary relations is equivalent to the category of hereditarily atomic von Neumann algebras and quantum relations in Weaver's sense. The category of all von Neumann algebras and quantum relations is sadly not dagger compact.

We denote the monoidal structure on $\qRel$ and $\qSet$ by $- \times -$, and its monoidal unit by $\mathbf 1$. A monoidal product $\X_1 \times \X_2$ of two quantum sets is generally not their product in the category theory sense, but this monoidal structure does retain a number of important properties of the usual Cartesian product of ordinary sets. First, $`S_1 \times `S_2$ is naturally isomorphic to $`(S_1 \times S_2)$, as functors in ordinary sets $S_1$ and $S_2$. Second, this monoidal structure has a terminal monoidal unit in $\qSet$, which allows us to define canonical projection morphisms $P_1\:\X_1 \times \X_2 \To \X_1$ and $P_2 \: \X_1 \times \X_2 \To \X_2$. Third, the monoidal product $\X_1 \times \X_2$ satisfies the uniqueness property of the category-theoretic product.

Explicitly, the symmetric monoidal category $(\qSet, \times, \mathbf 1)$
\begin{enumerate}
\item is finitely complete,
\item is finitely cocomplete,
\item is closed,
\item has a terminal monoidal unit $\mathbf 1$,
\item has, for each pair of morphisms  $F_1\: \Y \To \X_1$ and $F_2\: \Y \To \X_2$, \emph{at most} one morphism making the following diagram commute,
$$
\begin{tikzcd}
&
\Y \arrow{ld}[swap]{F_1} \arrow{rd}{F_2}
\arrow{d}[swap]{!}
&
\\
\X_1
&
\X_1 \times \X_2 \arrow{l}{P_1} \arrow{r}[swap]{P_2}
&
\X_2
\end{tikzcd}
$$
\item and has, for every monic $\Z \rightarrowtail \X$, a unique ``classical'' function from $\X$ to the coproduct $\mathbf 1 \uplus \mathbf 1$ making the following diagram into a pullback square:
$$
\begin{tikzcd}
\Z \arrow{r} \arrow[tail]{d}
&
\mathbf 1 \arrow[hook]{d}{J_2}
\\
\X \arrow[dotted]{r}{!}
&
\mathbf 1 \uplus \mathbf 1
\end{tikzcd}$$
\end{enumerate}
The function $J_2\: \mathbf 1 \hookrightarrow  \mathbf 1 \uplus \mathbf 1$ is the inclusion of $\mathbf 1$ into the second summand of the coproduct. In any symmetric monoidal category with terminal unit, we define a morphism $F\: \X \To \Y$ to be ``classical'' just in case there is a morphism $\X \To \Y \times \X$ whose first component is $F$ and whose second component is the identity on $\X$.

The monoidal structure $\times$ may be viewed as formalizing the compatibility of observables. If a quantum system is described by some quantum set $\X$, then its observables correspond to functions $\X \To `\RR$. Two observables $F_1$ and $F_2$ are then compatible in the familiar physical sense if and only if there is a function $\X \To `\RR \times `\RR$ whose first and second components are $F_1$ and $F_2$ respectively. A classical observable is then an observable that is compatible with every observable. More generally, a classical function between quantum sets is essentially a deterministic quantum operation that is compatible with every observable. Such functions are characterized by Proposition \ref{12.8}.

Each topos may be regarded as a symmetric monoidal category by equipping it with its category-theoretic product. Adjusting the definition of topoi in this way, we find that a symmetric monoidal category satisfying properties (1) -- (6) above is a topos if and only if its monoidal product coincides with its category-theoretic product, in other words, if and only if it is Cartesian monoidal. This observation suggests two tantalizing notions: first, that there is a direct quantum generalization of topos theory, and second, that monoidal categories generalize Cartesian categories like quantum objects generalize their classical counterparts in noncommutative mathematics.

Thus, the category of quantum sets and functions retains a number of familiar properties of the category of ordinary sets and functions. However, the incompatibility of morphisms that is formalized by the discrepancy between the monoidal product $\times$, and the category-theoretic product $\ast$, is both essential to the quantum setting, and a significant departure from the behavior of ordinary sets. Furthermore, the terminal object is not a generating object, and this fact complicates the direct comparison of our category to standard category-theoretic accounts of set theory, such as Lawvere's elementary theory of the category of sets \cite{Lawvere64}. However, we mention that the axiom of choice fails in its standard categorical formulations, because state preparation is a nondeterministic quantum operation, i.e., it is not described by a function. Thus, the measurement of a qubit is a surjective function $\H_2 \To `\{0,1\}$ that has no right inverse $`\{0,1\} \To \H_2$. Here, $\H_2$ is a quantum set which consists of a single two-dimensional atom, formalizing the qubit.

The opposite of the category of von Neumann algebras and unital normal $*$-homomorphisms with the spatial tensor product is also a symmetric monoidal category satisfying (1) -- (6) \cite{Kornell17}, and it likewise arises from the category of von Neumann algebras and quantum relations in Weaver's sense \cite{Kornell11}. The arguments are essentially the same; their adaptibility to quantum sets rests on the fact that $\ell^\infty$-direct sums of finite type I factors form a hereditary class of von Neumann algebras, in other words, that the von Neumann subalgebras of any von Neumann algebra in this class are themselves in this class. So\l tan's quantum families of maps are an even earlier instance of the same construction of exponential \mbox{operator algebras \cite{Soltan09}}.

Effectus theory \cite{ChoJacobsWesterbaanWesterbaan15} offers another generalization of topoi to the quantum setting. It is a much broader generalization than the one suggested by properties (1) -- (6), and in particular, it is directly applicable to categories formalizing probabilistic quantum operations by completely positive maps. The morphisms of $\qSet$ are deterministic in the sense of \cite{Kornell17}, but it is possible to reason indirectly about probabilistic processes by constructing a quantum set of subprobability distributions $\F(\X)$ for each quantum set $\X$ \cite{WesterbaanThesis}*{4.3.4}, following the same approach as for the categories of von Neumann algebras in \cite{Westerbaan14}. Furthermore, I expect that quantum sets form an adequate model of the quantum lambda calculus, as in \cite{ChoWesterbaan16}.

In noncommutative geometry, operator algebras are viewed as algebras of complex-valued functions on a quantum object. Our definition of quantum sets incarnates this intuition. A quantum set $\X$ is a concrete object distinct from and simpler than the corresponding operator algebra $\ell(\X)$, and furthermore, up to equivalence of categories, this operator algebra literally consists of functions from the quantum set $\X$ to a canonical quantum set $\C$, which has the structure of a ring, internal to the category $\qSet$. Formally, we show that the functors $\ell(\,-\,)$ and $\mathrm{Fun}(\,-\,;\C)$ are naturally isomorphic, as contravariant functors to the category of $*$-algebras over $\CC$.

In the appendix, we also show that the functors $\mathrm{Proj}(\ell(\,-\,))$, $\mathrm{Fun}(\,-\,; `\{0,1\})$, and $\mathrm{Rel}(\,-\, ; \mathbf 1)$ are naturally isomorphic. The binary relations from a quantum set $\X$ to the terminal quantum set $\mathbf 1$ correspond to the predicates on $\X$, which we naively expect to be quantum sets in the their own right. We exhibit a technical modification to the definition of quantum sets that incorporates this intuition, without undermining preceding arguments.

In some places, the presentation is more explicit and verbose than strictly necessary. I have included calculation and discussion where a citation or a terse remark might do. A minimalist approach would be at odds with the secondary, expository goals of this article. The purpose of this article is to provide a convenient foundation for noncommutative discrete mathematics, a growing area that includes both researchers in noncommutative mathematics, and researchers in other academic disciplines. For researchers in the latter category, this article might serve as an introduction to noncommutative mathematics.

\subsection{quantum families of graph colorings}\label{subsection}
We now demonstrate the connection to quantum pseudotelepathy with an example. The purpose of this example is not only to show that the notion of quantum graph coloring implicit in \cite{CameronMontanaroNewmanSeveriniWinter} can be formalized in this framework, but furthermore to show that it can be motivated within noncommutative mathematics, entirely apart from the graph coloring game. This example applies several defined notions, so I suggest that the interested reader look over the example to get a general sense of what is done, and then come back to it once they are more familiar with the notions being applied.

Let $G$ be a finite simple graph, and $T$ a finite set, intuitively of colors. A proper graph coloring of $G$ is a function $f\: G \To T$ satisfying the familiar condition that the values of $f$ on any two adjacent vertices must be distinct. Similarly, we define a family of proper graph colorings indexed by a set $X$ to be a function $f\: G \times X \To T$ such that $f(g_1, x) \neq f(g_2, x)$ for all pairs of adjacent vertices $g_1$ and $g_2$, and all indices $x$. Equivalently, we can ask that the inverse image of each color for the function $f(\,\cdot\,, x)$ be an independent subset of $G$, for each index $x$, or that the inverse images of each color for the functions $f(g_1, \, \cdot \,)$ and $f(g_2, \, \cdot \,)$ be disjoint subsets of $X$, for each adjacent pair of vertices $g_1$ and $g_2$.

Thus, we want to define a quantum family of proper graph colorings, indexed by a quantum set $\X$, to be a function $F\: `G \times \X \To `T$ such that the inverse images of each color under the functions $F(g_1, \,\cdot \,)$ and $F(g_2, \, \cdot\,)$ are disjoint, for each adjacent pair of vertices $g_1$ and $g_2$. We explain what this should mean. The quantum set $`G$ is the quantum set canonically identified with the ordinary set $G$; it has a one-dimensional atom for each element of $G$ (section \ref{section 2}). The notation $`G \times \X$ refers to the Cartesian product of quantum sets (Definition \ref{B}). The morphism $F$ should be a function (Definition \ref{J}, section \ref{section 3}). The notation $F(g_1, \, \cdot \,)$ refers to the function $F_{g_1} = F \circ ( `g_1 \times I_\X) \: \X \To `T$ (Definition \ref{E}), where the symbol $g_1$ is also being used to denote the ordinary function $ \{\ast\} \To G$ with value $g_1$; likewise for the notation $F(g_2, \, \cdot \,)$. The quantum set $`\{\ast\}$ is a unit for the monoidal product $\times$.

For each color $t \in T$, let us write $\{t\}_T$ for that singleton considered as a predicate on $T$, that is, as a subset of $T$. It is canonically identified with a predicate $`\{t\}_T$ on the quantum set $`T$ (Definition \ref{qur}). Taking inverse images, we obtain two predicates $F_{g_1}^\star(`\{t\}_T)$ and $F_{g_2}^\star(`\{t\}_T)$ on $\X$ (Definition \ref{image}). We ask that these two predicates be disjoint (Definition \ref{qur}).

\begin{definition}\label{pseudotelepathy}
A \emph{quantum family of proper graph colorings} indexed by a quantum set $\X$ is a function $F\: `G \times \X \To `T$ such that the predicates $F_{g_1}^\star(`\{t\}_T)$ and $F_{g_2}^\star(`\{t\}_T)$ are disjoint for each color $t \in T$, whenever $g_1, g_2 \in G$ are adjacent vertices.
\end{definition}

\begin{proposition}
The following are equivalent:
\begin{enumerate}[(1)]
\item There is a winning strategy for the graph coloring game using quantum entanglement.
\item There exists a quantum family of proper graph colorings indexed by a quantum set $\X \neq `\emptyset$.
\item There exists a quantum family of proper graph colorings indexed by a quantum set $\H$, with $\At(\H) = \{H\}$ for some nonzero finite-dimensional Hilbert space $H$.
\end{enumerate}
\end{proposition}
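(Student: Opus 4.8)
The plan is to establish $(3)\Rightarrow(2)\Rightarrow(3)$ and then the equivalence $(3)\Leftrightarrow(1)$, with the latter carrying essentially all of the content. The implication $(3)\Rightarrow(2)$ is immediate: a quantum set $\H$ with $\At(\H)=\{H\}$ and $H\neq 0$ satisfies $\H\neq`\emptyset$. For $(2)\Rightarrow(3)$ I would pick any atom $H\in\At(\X)$, which exists because $\X\neq`\emptyset$, and precompose the given family $F\:`G\times\X\To`T$ with $I_{`G}\times J$, where $J\:\H\rightarrowtail\X$ is the monic inclusion of the single-atom quantum set on $H$. The restricted functions satisfy $F'_{g}=F_{g}\circ J$, so their inverse images are $(F'_{g})^\star(`\{t\}_T)=J^\star\bigl(F_{g}^\star(`\{t\}_T)\bigr)$; since $J^\star$ preserves disjointness of predicates (being given by a pullback, it preserves meets and the bottom predicate), disjointness of the original predicates is inherited by their restrictions. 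This step needs only that routine fact about $J^\star$.

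The heart of the proposition is $(3)\Leftrightarrow(1)$, and I would reduce both sides to the same concrete linear-algebraic data. On the quantum-set side I would first unwind a quantum family indexed by $\H=\{H\}$. Because $`G$ is the coproduct $\biguplus_{g\in G}\mathbf 1$ and the monoidal product is closed, hence distributes over coproducts, we have $`G\times\H\iso\biguplus_{g\in G}\H$; consequently a function $F\:`G\times\H\To`T$ is the same datum as a $G$-indexed family of functions $F_{g}\:\H\To`T$. Dually, under the equivalence with hereditarily atomic von Neumann algebras, a function $\H\To`T$ corresponds to a unital normal $*$-homomorphism $\ell^\infty(T)\To L(H)$, that is, exactly to a projection-valued measure $(E_{g,t})_{t\in T}$ on $H$ with $\sum_{t}E_{g,t}=I_{H}$. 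Tracing through definition \ref{qur} and definition \ref{image}, the predicate $`\{t\}_T$ pulls back along $F_{g}$ to the predicate on $\H$ corresponding to the projection $E_{g,t}$, and two predicates on the single atom $H$ are disjoint precisely when the corresponding projections are orthogonal, i.e. $E_{g_1,t}E_{g_2,t}=0$. Thus $(3)$ is equivalent to the existence of a nonzero finite-dimensional $H$ and projections $(E_{g,t})$ with $\sum_{t}E_{g,t}=I_{H}$ for every vertex $g$ and with $E_{g_1,t}E_{g_2,t}=0$ whenever $g_1$ and $g_2$ are adjacent.

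It remains to match this projection data with quantum winning strategies, which is exactly the content of the reduction of Cameron, Montanaro, Newman, Severini, and Winter. For $(3)\Rightarrow(1)$ I would construct an explicit strategy: let Alice and Bob share the maximally entangled state on $H\otimes H$, have Alice measure vertex $g$ with $(E_{g,t})_{t}$ and Bob with the transposes $(E_{g,t}^{\mathrm T})_{t}$; synchronicity of the state guarantees equal answers on equal vertices, while $\Tr(E_{g_1,t}E_{g_2,t})=0$ for adjacent vertices guarantees distinct answers, so the strategy wins with certainty. For $(1)\Rightarrow(3)$ I would invoke \cite{CameronMontanaroNewmanSeveriniWinter}*{proposition 1} to replace an arbitrary winning quantum strategy by one using projective measurement; the perfect synchronous structure of the coloring game then forces the standard form in which Bob's projections are the transposes of Alice's and the winning conditions become $\sum_{t}E_{g,t}=I_{H}$ together with $\Tr(E_{g_1,t}E_{g_2,t})=0$ for adjacent $g_1,g_2$, and the latter upgrades to $E_{g_1,t}E_{g_2,t}=0$ because the trace of a product of two positive semidefinite operators vanishes only if the product itself vanishes. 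I expect the main obstacle to be bookkeeping rather than conceptual: one must verify carefully that the quantum-set notions of a function out of $`G\times\H$, of the inverse image of a predicate, and of disjointness translate on the nose into the projection-valued measure, the projection $E_{g,t}$, and orthogonality, so that the condition of definition \ref{pseudotelepathy} coincides exactly with the operator condition characterizing a perfect quantum coloring.
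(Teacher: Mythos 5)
Your proposal is correct and follows essentially the same route as the paper: decompose $`G\times\H$ as a coproduct of copies of $\H$, pass through the duality with hereditarily atomic von Neumann algebras to reduce condition (3) to a family of projections $p_{gt}$ with $\sum_t p_{gt}=1_H$ and $p_{g_1t}\cdot p_{g_2t}=0$ for adjacent vertices, invoke Cameron--Montanaro--Newman--Severini--Winter for the equivalence with (1), and prove $(2)\Rightarrow(3)$ by restricting along the inclusion of a single atom and using that inverse images preserve disjointness. The only difference is that you spell out the entangled-state strategy and the reduction to projective measurements explicitly, where the paper delegates that step entirely to the citation.
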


This proposition refers to the graph coloring game investigated in \cite{CameronMontanaroNewmanSeveriniWinter}, also described in subsection \ref{game}. The quantum family of proper graph colorings in (3) is essentially a function of dimension $\mathrm{dim}(H)$ from $G$ to $T$ in the sense of \cite{MustoReutterVerdon}. The empty quantum set $`\emptyset$ has no atoms at all (Definition \ref{B}).

\begin{proof} Let $H$ be a nonzero finite-dimensional Hilbert space, and let $\H$ be the quantum set whose only atom is $H$. The quantum set $`G \times \H$ is a coproduct of copies of $\H$, one for each element of $G$ (section \ref{section 8}). The functions $`g \times I_\H \: \H \To `G \times \H$ are the injections for this coproduct. Thus, a function $F\: `G \times \H \To `T$ is uniquely determined by its restrictions $F_g\: \H \To `T$, for $g \in G$. 

Under the duality between quantum sets and hereditarily atomic von Neumann algebras, the functions $F_g\: \H \To `T$ correspond to unital normal $*$-homomorphisms $F^\star_g\: \ell^\infty(T) \To L(H)$ (Definition \ref{P}, Theorem \ref{Q10}). The expression $L(H)$ denotes the von Neumann algebra of all bounded linear operators on $H$, which are essentially matrices. Each unital normal $*$-homomorphism $F_g^\star\: \ell^\infty(T) \To L(H)$ is uniquely determined by the projections $p_{gt} =F_g^\star(\delta_t)$, for $t \in T$, where $\delta_t \in \ell^\infty(T)$ is the function that takes value $1$ at $t$, and vanishes otherwise. Thus, we have a bijection between functions $F\: `G \times \H \To `T$, and indexed families of projections $({p_{gt} \in L(H)}\suchthat {g \in G,} \, {t \in T})$ satisfying $\sum_{t\in T} p_{gt} = 1_H$ for each $g \in G$.

Let $g_1$ and $g_2$ be adjacent vertices. The condition that the predicate $F_{g_1}^\star(`\{t\}_T)$ be disjoint from the predicate $F_{g_2}^\star(`\{t\}_T)$ is equivalent to the condition that the projection $F_{g_1}^\star(\delta_t)$ be orthogonal to the projection $F_{g_2}^\star(\delta_t)$ (Theorem \ref{4functors}). Therefore, we have a bijection between quantum families of proper graph colorings indexed by $\H$, and indexed families of projections $(p_{gt} \in L(H)\suchthat g \in G, \, t \in T)$ satisfying $\sum_{t\in T} p_{gt} = 1_H$ for each vertex $g$, and satisfying $p_{g_1 t} \cdot p_{g_2 t} = 0$ for each adjacent pair of vertices $g_1$ and $g_2$, and each color $t$. Appealing to \cite{CameronMontanaroNewmanSeveriniWinter}*{section II}, we establish $(1) \Leftrightarrow (3)$.

The implication $(3) \Rightarrow (2)$ is trivial. We assume (2), that there exists a quantum family of proper graph colorings $F\: `G \times \X \To `T$ for some nonempty quantum set $\X$. Let $H$ be any atom of $\X$, and let $\H$ be the quantum set whose only atom is $H$. Let $J\: \H \hookrightarrow \X$ be the inclusion function (Definition \ref {Y1}). We claim that the function $\tilde F = F \circ (` \mathrm{id}_G \times J)\: `G \times \H \To `T$ is a quantum family of proper graph colorings. We compute that for all vertices $g$,
\begin{equation*}
\tilde F_g = F \circ (` \mathrm{id}_G \times J) \circ( `g  \times I_{\H}) = 
F \circ ( `g  \times I_\X) \circ J = F_g \circ J.
\end{equation*}
Let $g_1$ and $g_2$ be adjacent vertices, and let $t$ be a color. By assumption, $F_{g_1}^\star(`\{t\}_T)$ and $F_{g_2}^\star(`\{t\}_T)$ are disjoint. The inverse images of disjoint predicates are disjoint, so $\tilde F_{g_1}^\star(`\{t\}_T) = J^\star(F_{g_1}^\star(`\{t\}_T))$ and $\tilde F_{g_2}^\star(`\{t\}_T) = J^\star(F_{g_2}^\star(`\{t\}_T))$ are disjoint (Theorem \ref{4functors}). Therefore, $\tilde F$ is a quantum family of proper graph colorings, indexed by $\H$.
\end{proof}

We have described a class of quantum strategies as being equivalently quantum families of functions. The  quantum functions of Musto, Reutter, and Verdon \cite{MustoReutterVerdon}*{definition 3.11} are likewise equivalently quantum families of functions. Their category $\mathrm{QSet}$ \cite{MustoReutterVerdon}*{definition 3.18} is weakly equivalent as a 1-category to the category $\CCC$ whose objects are finite quantum sets, and whose nonzero morphisms are families of functions indexed by atomic quantum sets. More precisely, an object of $\CCC$ is a finite quantum set in our sense, and a morphism of $\CCC$ from $\X$ to $\Y$ is a function $\X \times \H \To \Y$, with $\H$ empty or atomic. Composition is defined in the expected way. The weak equivalence from $\CCC$ to $\mathrm{QSet}$ is closely related to the equivalence in Theorem \ref{Q10}, but it is covariant. Each object $\X$ in $\CCC$ is taken to the finite-dimensional von Neumann algebra $\ell^\infty(\X)$, which is also uniquely a special symmetric dagger Frobenius algebra \cite{Vicary}*{theorem 4.6}, i.e., a quantum set in the sense of Musto, Reutter, and Verdon. Each morphism $F\: \X \times \H \To \Y$ from $\X$ to $\Y$ in $\CCC$ is taken to the linear map obtained from the coalgebra homomorphism $(F^\star)^\dagger$ by putting it through the following natural homomorphisms:
\begin{equation*}
L(\ell^\infty(\X) \tensor L(H), \ell^\infty(\Y) ) \;\iso \; L(\ell^\infty(\X) \tensor H \tensor H^*, \ell^\infty(\Y)) \; \iso \; L(\ell^\infty(\X) \tensor H, H \tensor \ell^\infty(\X))
\end{equation*}
Here, $H$ is the unique atom of $\H$, if $\H$ is nonempty, and otherwise $H = 0$.
The result is a function in the sense of Musto, Reutter, and Verdon \cite{MustoReutterVerdon}*{definition 3.11}; the dimension of $H$ is the dimension of the function. We do not prove the claimed equivalence of categories, which is barely outside the scope of this article.

\subsection{notation and terminology} For the benefit of researchers working in physics and computer science, the development is initially framed in terms of Hilbert spaces, rather than operator algebras. Our binary relations correspond to the quantum relations of Weaver \cite{Weaver10}, but this Hilbert space framing avoids operator topologies. The Hermitian adjoint is rendered $\dagger$, and the symbol $\ast$ is reserved for the Banach space adjoint. Nevertheless, we retain the stock term ``$*$-homomorphism'', to mean an algebra homomorphism that preserves the Hermitian adjoint $\dagger$. We write $H \leq K$ when $H$ is a subspace of $K$. We write $L(H,K)$ for the space of all bounded linear operators from $H$ to $K$. If $V \leq L(H_1, H_2)$ and $W \leq L(H_2, H_3)$, we write $W \cdot V$ for the span of the set $\{ wv \suchthat v \in V,\, w \in W \}$. We also use the dot as a visual separator in products. Homomorphisms are not assumed to be unital, but representations are assumed to be nondegenerate. Von Neumann algebras are assumed to be concrete and to contain the identity operator. A von Neumann subalgebra of $A$ is an ultraweakly closed $*$-subalgebra that need not contain the identity operator, but the Hilbert space shrinks correspondingly. An ortholattice is a bounded lattice, not necessarily distributive, equipped with an orthocomplementation operation. For quantum sets $\X$ and $\Y$, we write $\mathrm{Fun}(\X;\Y)$ for the set of functions from $\X$ to $\Y$, we write $\mathrm{Par}(\X;\Y)$ for the set of partial functions from $\X$ to $\Y$, and we write $\mathrm{Rel}(\X;\Y)$ for the set of binary relations from $\X$ to $\Y$. An ordinary set is just a set in the ordinary sense, as opposed to a quantum set, and an ordinary function is just a function in the ordinary sense, as opposed to a function between quantum sets.

For reference, I suggest \textit{Operator Algebras} \cite{Blackadar17} and\textit{Categorical Quantum Mechanics} \cite{AbramskyCoecke08}; the term ``strongly compact closed'' is used synonymously with ``dagger compact''.

\section{Quantum sets}\label{section 2}

\begin{definition}\label{A}
A \emph{quantum set} $\X$ is completely determined by a set $\At(\X)$ of nonzero finite-dimensional Hilbert spaces, called the \emph{atoms} of $\X$.
\end{definition}

Formally $\X = \At(\X)$, so each quantum set is also an ordinary set, i.e., a set in the ordinary sense. Of course, in the standard formalization of mathematics in set theory, the same is true of each real number. Formalized as a Dedekind cut, the real number $\pi$ is equal to the set $\QQ \cap (-\infty, \pi)$, but it is good mathematical practice to draw a distinction between the two objects. We similarly draw a distinction between $\X$ and $\At(\X)$. We write $X \atomof \X$ to mean that $X$ is an atom of $\X$, that is, as an abbreviation for $X \in \At(\X)$. The symbol $\atomof$ is intended to suggest the word ``atom''.

Thus, the atoms of $\X$ are not the elements of $\X$, except on a purely formal level. The intuition is that the genuine elements of $\X$ may be mathematically fictional, like the points of a quantum compact Hausdorff space. They may be inextricably mixed together, having no individual identity. The atoms of $\X$ merely correspond to those subsets of $\X$ that are indecomposable with respect to the union operation, which we now define.

\begin{definition}\label{B}\label{C} \quad 
\begin{enumerate}%[\qquad(a)]
\item A quantum set $\X$ is \emph{empty} iff $\At(\X) = \emptyset$.
\item A quantum set $\X$ is \emph{finite} iff $\At(\X)$ is finite.
\item A quantum set $\X$ is a \emph{subset} of a quantum set $\Y$, written $\X \subsetof \Y$, iff $\At(\X) \subsetof \At(\Y)$.
\item The \emph{union} $\X \union \Y$ of quantum sets $\X$ and $\Y$ is defined by
\begin{equation*}
\At(\X \union \Y) = \At(\X) \union \At(\Y).
\end{equation*}
\item The \emph{Cartesian product} $\X \times \Y$ of quantum sets $\X$ and $\Y$ is defined by
\begin{equation*}
\At(\X \times \Y) = \{X \tensor Y \suchthat \mathrm X \atomof \X \text{ and } Y \atomof \Y\}.
\end{equation*}
\end{enumerate}
\end{definition}

Each of the five notions just defined are generalizations of the corresponding notions for ordinary sets, in the following precise sense. We identify each ordinary set $S$ with a quantum set $`S$, which has a one-dimensional atom for each element of $S$. Formally, $`S$ is defined by $\At(`S) = \{\CC_s \suchthat s \in S\}$, where $\CC_s = \ell^2(\{s\})$, with the understanding that $\CC_s \neq \CC_{s'}$ whenever $s$ and $s'$ are distinct elements of $S$.

For all ordinary sets $S$ and $T$, we now have that $`S$ is empty if and only if $S$ is empty in the ordinary sense, that $`S$ is finite if and only if $S$ is finite in the ordinary sense, and that $`S$ is a subset of $`T$ if and only if $S$ is a subset of $T$ in the ordinary sense. Furthermore, for all ordinary sets $S$ and $T$, we have that $`S \union `T = `(S \union T)$. However, the equation $`S \times `T = `(S \times T)$ is only true up to isomorphism. The equality becomes strict if the dagger compact category $\mathbf{FdHilb}$ of finite-dimensional Hilbert spaces and linear operators is tailored appropriately (appendix \ref{section 14}).

We say that quantum sets $\X$ and $\Y$ are isomorphic iff there exists an ordinary bijection $\At(\X) \To \At(\Y)$ that preserves the dimensions of atoms. This condition is equivalent to the existence of an invertible morphism from $\X$ to $\Y$ in the category $\qSet$ of quantum sets and functions (Proposition \ref{L2}).

This pattern of generalization does strongly suggest a notion of membership for quantum sets. It is certainly natural to say that an object $x$ is an element of a quantum set $\X$ iff $`\{x\} \subsetof \X$, or equivalently, iff $\CC_x \in \At(\X)$. One-dimensional atoms not of the form $\CC_x$ also intuitively correspond to individual elements, albeit not to specific objects in the mathematical universe. As with the generalized Cartesian product, this flaw can be repaired (appendix \ref{section 14}), but it is inconsequential to the structural approach taken in this article. 

Thus, we define a quantum set $\X$ to be a \emph{singleton} iff it has exactly one atom, and that atom is one-dimensional. More inclusively, we define a quantum set $\X$ to be \emph{atomic} iff it simply has exactly one atom. The two notions coincide for ordinary sets, considered as quantum sets in the manner described above. We use the established term for the former notion, because it formalizes the intuition that $\X$ has exactly one element, and the latter notion does not. For example, any atomic quantum set that is not a singleton admits a surjection onto $`\{1,2\}$ (Proposition \ref{Y}).

Our overall approach is to use established terms in an unqualified way only when that term retains its familiar meaning whenever all the relevant quantum sets are of the form $`S$. The notation is chosen to blur the distinction between $S$ and $`S$, and to emphasize the distinction between $\X$ and $\At(\X)$. Nevertheless, since we will occasionally wish to define a quantum set by specifying its atoms, we introduce the following notation.

\begin{definition}\label{C2}
Let $M$ be an ordinary set of finite-dimensional Hilbert spaces. We write $\Q M$ for the unique quantum set such that $\At(\Q M) = \{ H \in M \suchthat \dim(H) \neq 0 \}$. For example, we write $\mathbf 1 = \Q\{\CC\}$ for the quantum set whose only atom is $\CC$.
\end{definition}

The definition of quantum sets (Definition \ref{A}) might more naturally be given to include zero-dimensional Hilbert spaces. Chris Heunen observed that this yields equivalent categories $\qRel$ and $\qSet$; adding zero-dimensional atoms to a quantum set does not affect its isomorphism class. We exclude zero-dimensional atoms so that the isomorphism of quantum sets in $\qSet$ is equivalent to their isomorphism in the naive sense. Nevertheless, sometimes the most natural definition of a given quantum set includes Hilbert spaces that may be zero-dimensional, and in those cases, the convention in Definition \ref{C2} is useful and appropriate.

\section{Binary relations between quantum sets}\label{section 3}

We write $L(H, K)$ for the set of all bounded linear operators from $H$ to $K$.

\begin{definition}\label{D}
A \emph{binary relation} $R$ from a quantum set $\X$ to a quantum set $\Y$ is a function that assigns to each pair of atoms, $X$ of $\X$ and $Y$ of $\Y$, a subspace ${R(X,Y) \leq L(X, Y)}$. We write $\mathrm{Rel}(\X;\Y)$ for the set of all binary relations from $\X$ to $\Y$.
\end{definition}

If $\X$ and $\Y$ are finite, then we can visualize each binary relation $R$ from $\X$ to $\Y$ as a matrix. Indexing the atoms of both quantum sets by initial segments of the natural numbers, we picture $R$ as a matrix whose $(i,j)$ entry is the subspace $R(X_j, Y_i) \leq L(X_j, Y_i)$. This visual intuition is useful for understanding the operations on these binary relations, and for constructing examples, even when $\X$ or $\Y$ is infinite. In the countably infinite case, we picture $R$ as an infinite matrix.

For ordinary sets $S$ and $T$, the binary relations from $`S$ to $`T$ are in canonical bijective correspondence with the ordinary binary relations from $S$ to $T$, because for all one-dimensional Hilbert spaces $X$ and $Y$, the vector space $L(X,Y)$ is itself one-dimensional. If $r$ is an ordinary binary relation from $S$ to $T$, we write $`r$ for the corresponding binary relation from $`S$ to $`T$.

\begin{definition}\label{E}
Let $\X$, $\Y$, and $\Z$ be quantum sets. If $R$ is a binary relation from $\X$ to $\Y$, and $S$ is a binary relation from $\Y$ to $\Z$, then their \emph{composition} is the relation from $\X$ to $\Z$ defined by
\begin{equation*}
(S \circ R)(X, Z) = \mathrm{span} \{ s r \suchthat \exists Y \atomof \Y\: s \in S(Y,Z) \mathbin{\&} r \in R(X, Y)\}.
\end{equation*}
\end{definition}

Visualizing $R$ and $S$ as matrices, their composition $S \circ R$ is essentially obtained by matrix multiplication. The entries of the matrices are subspaces of linear operators; we add and multiply these subspaces in the naive way. Thus, quantum sets and their binary relations form a category $\qRel$. The identity relation $I_\X$ on a quantum set $\X$ is defined as follows: for all $X,X'\atomof \X$, the subspace $I_\X (X, X)$ is spanned by the identity operator on $X$, and $I_\X(X, X') = 0$ if $X \neq X'$. Visually, it is a diagonal matrix with spans of identity operators on its diagonal. 

Identifying each ordinary set $S$ with its quantum set counterpart $`S$, the category $\mathbf{Rel}$ of ordinary sets and ordinary binary relations is a full subcategory of $\qRel$. More precisely, the functor $S \mapsto `S$ is an equivalence of categories from $\mathbf{Rel}$ to the full subcategory of quantum sets whose atoms are all one-dimensional. Furthermore, this is an equivalence of dagger monoidal categories for the canonical dagger monoidal structure on $\qRel$, which we proceed to describe.

\begin{definition}\label{E2}
The \emph{Cartesian product} of binary relations $R_1$ and $R_2$ is the binary relation $R_1 \times R_2$ from $\X_1 \times \X_2$ to $\Y_1 \times \Y_2$ defined by
\begin{equation*}
(R_1 \times R_2)(X_1 \tensor X_2, Y_1 \tensor Y_2) = \mathrm{span}\{r_1 \tensor r_2 \suchthat r_1 \in R_1(X_1, Y_1),\, r_2 \in R_2(X_2, Y_2)\},
\end{equation*}
where $R_1$ is a binary relation from a quantum set $\X_1$ to a quantum set $\Y_1$, and $R_2$ is a binary relation from a quantum set $\X_2$ to a quantum set $\Y_2$.
\end{definition}

In simple examples, it is often possible to visualize the Cartesian product $R_1 \times R_2$ as a matrix of matrices, just as we do for the tensor product of ordinary matrices.

\begin{definition}\label{F}
For each finite-dimensional Hilbert space $H$, write $H^* = L(H, \CC)$ for the dual Hilbert space. For each linear operator $v \in L(H, K)$, write $v^* \in L(K^*, H^*)$ for the transpose of $v$, defined by $v^*(\varphi) = \varphi \circ v$. For each subspace $V \leq L(H, K)$, write $V^* = \{v^* \suchthat v \in V\} \leq L(K^*, H^*)$. The \emph{dual} of a quantum set $\X$ is the quantum set $\X^* = \Q\{X^*\suchthat X \atomof \X\}$. The \emph{transpose} of a binary relation $R$ from $\X$ to $\Y$ is the binary relation $R^*$ from $\Y^*$ to $\X^*$ defined by $R^*(Y^*,X^*) = R(X, Y)^*$, for $X \atomof \X$ and $Y \atomof \Y$.
\end{definition}

\begin{definition}\label{G}
Let $H$ and $K$ be finite-dimensional Hilbert spaces. For each linear operator $v \in L(H, K)$, write $v^\dagger \in L(K, H)$ for the Hermitian adjoint of $v$, defined by $\<v^\dagger k |h \> = \< k|vh\>$. For each subspace $V \leq L(H, K)$, write $V^\dagger = \{v^\dagger\suchthat v \in V\} \leq L(K,H)$. The \emph{adjoint} of a binary relation $R$ from $\X$ to $\Y$ is the binary relation $R^\dagger$ from $\Y$ to $\X$ defined by $R^\dagger(Y, X) = R(X, Y)^\dagger$.
\end{definition}

\begin{theorem}\label{dagger}
The structure $(\qRel,(\,-\,\times \,-\,), \mathbf 1, (\,-\,)^*, (\,-\,)^\dagger)$ is a dagger compact category.
\end{theorem}

The complete proof of this theorem is straightforward, but very tedious, so it is omitted. It is well known that the category $\FdHilb$ of finite-dimensional Hilbert spaces and linear operators, equipped with the tensor product and the Hermitian adjoint, is a dagger compact category, and the proof of Theorem \ref{dagger} proceeds by lifting these properties of $\FdHilb$ to $\qRel$, one commutative diagram at a time. Therefore, in lieu of providing a proof of Theorem \ref{dagger}, we exhibit the natural transformations that form the implicit structure of the dagger compact category $\qRel$, and we detail the properties of this structure that Theorem \ref{dagger} so tersely claims.

First, Theorem \ref{dagger} claims that $(\qRel, (\,-\,\times \,-\,), \mathbf 1)$ is a symmetric monoidal category. The monoidal product is the functor $(\,-\,\times \,-\,) \:\qRel \times \qRel \To \qRel$ given by Definition \ref{B}(5) and Definition \ref{E2}. The monoidal unit is the quantum set $\mathbf 1= \Q\{\CC\}$. The category is further equipped with natural isomorphisms called the associator, the left unitor, the right unitor, and the braiding. Each component of the associator in $\qRel$ is a binary relation $(\X \times \Y) \times \Z \To \X \times (\Y \times \Z)$ that takes pairs of the form $((X \tensor Y) \tensor Z, X \tensor (Y \tensor Z))$ to the span of the corresponding component of the associator in $\FdHilb$, and vanishes otherwise. Each component of the left unitor in $\qRel$ is a binary relation $\mathbf 1 \times \X \To \X$ that takes pairs of the form $(\CC \tensor X, X)$ to the span of the corresponding component of the left unitor in $\FdHilb$, and vanishes otherwise. The right unitor in $\qRel$ is defined similarly. Each component of the braiding in $\qRel$ is a binary relation $\X \times \Y \to \Y \times \X$ that takes pairs of the form $(X \tensor Y, Y \tensor X)$ to the span of the corresponding component of the braiding in $\FdHilb$, and vanishes otherwise. These natural isomorphisms satisfy three identities, typically expressed by commutative diagrams. The first identity disambiguates the structural natural isomorphism $((\W \times \X) \times \Y) \times \Z \To \W \times (\X \times (\Y \times \Z))$. The second identity disambiguates the structural natural isomorphism $\X \times \Y \To \Y \times \X$. The third identity disambiguates the structural natural isomorphism $\X \times \mathbf 1  \To \X$. 

Second, Theorem \ref{dagger} claims that $(\qRel, (\,-\,\times \,-\,), \mathbf 1)$ is a compact closed category, whose duals are given by the functor $(\,-\,)^*$. The contravariant functor $(\,-\,)^*\: \qRel \To \qRel$ is given by Definition \ref{F}. The category is further equipped with natural transformations called the unit and the counit. Each component of the unit in $\qRel$ is a binary relation $H_\X \:\mathbf 1 \to \X^* \times \X$ that takes pairs of the form $(\CC, X^* \tensor X)$ to the span of the corresponding component of the unit in $\FdHilb$, and vanishes otherwise. The $X$-component of the unit in $\FdHilb$ is the linear operator from $\CC$ to $X \tensor X^*$ that takes the complex number $1$ to the ``identity matrix". Each component of the counit in $\qRel$ is a binary relation $E_\X \:\X \times \X^* \to \mathbf 1$ that takes pairs of the form $(X \tensor X^*, \CC)$ to the corresponding component of the counit in $\FdHilb$, and vanishes otherwise. The $X$-component of the counit in $\mathbf{FdHilb}$ is the evaluation operator $X \tensor X^* \To \CC$. These natural transformations satisfy two identities. The first identity expresses that $(E_\X \times I_\Y) \circ (I _\X \times R^* \times I_\Y)\circ (I_\X \times H_\Y) = R$ for each binary relation $R$ from a quantum set $\X$ to a quantum set $\Y$. Dually, the second identity expresses that $ (I_{\X^*} \times E_\Y)  \circ (I_{\X^*} \times R \times I_{\Y^*})   \circ (H_\X \times I_{\Y^*}) = R^*$ for each binary relation $R$ from a quantum set $\X$ to a quantum set $\Y$.

Third, Theorem \ref{dagger} claims that $\qRel$ is a dagger category, compatibly with its compact closed structure. The contravariant functor $(\,-\,)^\dagger \: \qRel \To \qRel$ is given by Definition \ref{G}. It is identity on quantum sets, and for each binary relation $R$, it satisfies $(R^\dagger)^\dagger = R$. The functor $\dagger$ is compatible with the symmetric monoidal structure on $\qRel$ in the sense that
$(R \times S)^\dagger = R^\dagger \times S^\dagger$, for all binary relations $R$ and $S$, and each component of the associator, the left unitor, the right unitor and the braiding is unitary. Recall that a morphism $R$ in a dagger category is said to be unitary if it is invertible and satisfies $R\inv = R^\dagger$. Finally, the functor $\dagger$ is compatible with the compact closed structure on $\qRel$ in the sense that $E_\X^\dagger =  S_{\X^*,\X} \circ H_\X$ for all quantum sets $\X$, where $H_\X$, $E_\X$, and $S_{\X^*, \X}$ are components of the unit, the counit, and the braiding, respectively.

\smallskip

The significance of the dagger compact structure on $\qRel$ is two-fold. First, any dagger compact category supports a robust graphical calculus, in which morphisms are depicted as boxes, and objects are depicted as strings \cite{AbramskyCoecke08}. This graphical calculus renders the structural morphisms intuitive and unobtrusive, sometimes dramatically easing calculation.

Second, the category $\Rel$ of ordinary sets and binary relations is well known to be dagger compact, and it is straightforward to check that the inclusion $S \mapsto `S$ is a full and faithful functor that respects this dagger compact structure. Using the alternative definitions given in appendix \ref{section 14}, it does so exactly (Proposition \ref{Z5}). Many classes of ordinary structures can be axiomatized in terms of the dagger compact structure on $\Rel$, so the dagger compact structure on $\qRel$ yields a method of quantization. For example, in this article we use this method to quantize the structures that consist of two sets with a function between them (Definition \ref{J}). One of our main results is that in this instance, this quantization method agrees with the standard quantization method of noncommutative geometry (Theorem \ref{Q10}).

This method of quantization is essentially due to Weaver \cite{Weaver10}, who distilled it from his work with Kuperberg on the quantization of metric spaces \cite{KuperbergWeaver10}. Weaver's original approach is more general in the sense that he defines his quantum relations on arbitrary von Neumann algebras, and not just on the hereditarily atomic von Neumann algebras implicitly considered here (Proposition \ref{Q}). Thus, it generalizes discrete structures simultaneously to the quantum-mathematical and the measure-theoretic settings. However, the category of all von Neumann algebras and all quantum relations is not dagger compact, so some structures can be can be directly generalized to the quantum-mathematical, but not the measure-theoretic setting.

For example, consider the quantum generalization of simple graphs. An ordinary simple graph is just a set $S$ equipped with a symmetric, irreflexive binary relation $r$, so we can define a quantum graph to be a quantum set $\X$ equipped with a binary relation $R$ that is symmetric in the sense that $R^\dagger = R$, and irreflexive in the sense that $\mathrm{Tr}(R) = 0$. In any dagger compact category, we have a trace on the endomorphisms of each object $\X$, which is defined by $\Tr(R) = E_\X \circ (R \times I_{\X^*}) \circ E_\X^\dagger$. It is easy to check that a binary relation $r$ in $\Rel$ is irreflexive if and only if $\Tr(r) = 0$.

This definition of a quantum graph does not generalize directly to arbitrary von Neumann algebras. In that context, a quantum graph is defined to be equipped with a symmetric, \emph{reflexive} quantum relation. This resolution of the difficulty is well motivated, and it is adequate for many purposes, but not for all purposes. For example, we can define a matching in a quantum graph $(\X,R)$ to be a symmetric, irreflexive binary relation $S$ on $\X$ such that $S \leq R$ and $S\circ S = I$. For another example, we can define a graph homomorphism from a quantum graph $(\X_1,R_1)$ to a quantum graph $(\X_2, R_2)$ to be a function $F$ from $\X_1$ to $\X_2$ such that $F \circ R_1 \leq R_2 \circ F$. No easy fix is apparent in either example if instead of working with quantum sets, we work with arbitrary von Neumann algebras.

\begin{remark}
The category $\qRel$ also has coproducts of arbitrary families of quantum sets, even infinite families.
The coproduct of two quantum sets $\X$ and $\Y$ in the category $\qRel$ is their disjoint union $\X \uplus \Y$, which is defined up to isomorphism by $\At(\X \uplus \Y) = \At(\X) \uplus \At(\Y)$. Assuming that $\At(\X)$ and $\At(\Y)$ are disjoint, the inclusions $J\: \X \hookrightarrow \X \uplus \Y$ and $K\: \Y \hookrightarrow \X \uplus \Y$ are then defined by $J(X,X) = \CC\cdot 1_X$ for $X \atomof \X$, and $K(Y,Y) = \CC \cdot 1_Y$ for $Y \atomof \Y$, with the other components vanishing. If $R$ is a binary relation from $\X$ to a quantum set $\Z$, and $S$ is a binary relation from $\Y$ to $\Z$, then we can define a binary relation $[R,S]$ from $\X \uplus \Y$ to $\Z$,  by $[R,S](X,Z) = R(X,Z)$ and $[R,S](Y,Z) = S(Y,Z)$, for $X \atomof \X$, $Y \atomof \Y$, and $Z \atomof \Z$. It is straightforward to check that $[R,S] \circ J = R$, that $[R,S]\circ K = S$, and that together these two equations uniquely determine $[R,S]$.
\end{remark}

It follows by general arguments that $\qRel$ has biproducts and is enriched over commutative monoids  \cite{AbramskyCoecke08}*{5.2}. One may check that the monoidal product of this enrichment is the disjunction $\vee$, defined below. In fact, it is easy to see directly from Definition \ref{D} that composition respects infinitary disjunction. Altogether, the set of binary relations from a quantum set $\X$ to a quantum set $\Y$ carries the structure of a complete orthomodular lattice, but composition does not generally respect the rest of this structure.

\begin{definition}\label{I}
Let $\X$ and $\Y$ be quantum sets. The set $\mathrm{Rel}(\X; \Y)$ of binary relations from $\X$ to $\Y$ is canonically a complete orthomodular lattice, with operations defined atomwise. Explicitly, for $R,S\in \mathrm{Rel}(\X; \Y)$, we define:
\begin{enumerate}
\item $R \vee S = (R(X, Y) \vee S(X,Y) \suchthat X \atomof \X,\, Y \atomof \Y)$
\item $R \wedge S =( R(X, Y) \wedge S(X,Y)\suchthat X \atomof \X,\, Y \atomof \Y)$
\item $\neg R = (R(X,Y)^\perp\suchthat X \atomof \X,\, Y \atomof \Y)$
\end{enumerate}
We extend (1) and (2) to arbitrary families in $\mathrm{Rel}(\X;\Y)$, in the obvious way. We also write:
\begin{enumerate}
\setcounter{enumi}{3}
\item $R \leq S \;\Leftrightarrow \; \forall X \atomof \X\: \forall Y \atomof\Y\: R(X,Y) \leq S(X,Y)$
\item $R \perp S \;\Leftrightarrow \; \forall X \atomof \X\: \forall Y \atomof\Y\: R(X,Y) \perp S(X,Y)$
\end{enumerate}

\end{definition}

\section{Functions between quantum sets}\label{section 4}

\begin{definition}\label{J}
A \emph{function} from a quantum set $\X$ to a quantum set $\Y$ is a binary relation $F$ from $\X$ to $\Y$ such that $F^\dagger \circ F \geq I_\X$ and $F \circ F^\dagger \leq I_\Y$. We write $\mathrm{Fun}(\X;\Y)$ for the set of all functions from $\X$ to $\Y$.
\end{definition}

The identity binary relation $I_\X$ on a quantum set $\X$ satisfies $I_\X^\dagger = I_\X$ and $I_\X \circ I_\X = I_\X$, so it is a function. Furthermore, if $F$ is a function from $\X$ to a quantum set $\Y$, and $G$ is a function from $\Y$ to a quantum set $\Z$, then $G \circ F$ is also a function; we display one of the two relevant computations:
\begin{equation*}
( G \circ F)^\dagger \circ (G \circ F) = F^\dagger \circ G^\dagger \circ G \circ F \geq F^\dagger \circ I_\Y \circ F = F^\dagger \circ F \geq I_\X
\end{equation*}
Thus, quantum sets and functions form a subcategory $\qSet$ of $\qRel$.

\begin{proposition}\label{K}
Let $F$ be a function from a quantum set $\X$ to a quantum set $\Y$. If $F$ is invertible in $\qRel$, then $F\inv = F^\dagger$.
\end{proposition}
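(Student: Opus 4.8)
The plan is to exploit the two defining inequalities of a function together with the defining equations of the inverse and the monotonicity of composition, squeezing the inverse between $F^\dagger$ and itself. Write $G = F\inv$, so that by definition of invertibility in $\qRel$ we have $G \circ F = I_\X$ and $F \circ G = I_\Y$. The goal is to show $G = F^\dagger$.

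First I would record the single fact the argument rests on: composition in $\qRel$ is monotone with respect to the order $\leq$ of definition \ref{I}. This is immediate from definition \ref{E}, since enlarging a subspace $R(X,Y)$ can only enlarge each span appearing in a composite; hence $R \leq R'$ implies both $S \circ R \leq S \circ R'$ and $R \circ T \leq R' \circ T$. Associativity of composition is available because $\qRel$ is a category (theorem \ref{rel}).

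Next I would run the two squeezing computations. Starting from the first defining inequality $F^\dagger \circ F \geq I_\X$, I right-compose with $G$ and substitute $F \circ G = I_\Y$, obtaining $F^\dagger = F^\dagger \circ I_\Y = (F^\dagger \circ F) \circ G \geq I_\X \circ G = G$. Symmetrically, starting from the second defining inequality $F \circ F^\dagger \leq I_\Y$, I left-compose with $G$ and substitute $G \circ F = I_\X$, obtaining $F^\dagger = I_\X \circ F^\dagger = G \circ (F \circ F^\dagger) \leq G \circ I_\Y = G$. Combining the two inequalities gives $F^\dagger = G = F\inv$.

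I expect no genuine obstacle: the argument is purely formal and uses only that $\qRel$ is a category whose hom-posets make composition monotone in each variable. The one point demanding a moment's care is bookkeeping — choosing the correct direction of each inequality and the correct side on which to compose, so that the identity equations $G \circ F = I_\X$ and $F \circ G = I_\Y$ can be applied to collapse each composite down to exactly $F^\dagger$ on one side and $G$ on the other.
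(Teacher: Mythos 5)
Your proof is correct and is essentially identical to the paper's: the same two squeezing computations, using $F^\dagger \circ F \geq I_\X$ composed with $F\inv$ on the right and $F \circ F^\dagger \leq I_\Y$ composed with $F\inv$ on the left, with monotonicity of composition (which the paper uses implicitly and you make explicit) doing the work.
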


\begin{proof}

\begin{equation*}
F^\dagger = F^\dagger \circ F \circ F\inv \geq I_\X \circ F \inv = F\inv
\end{equation*}

\begin{equation*}
F^\dagger = F\inv \circ F \circ F^\dagger \leq F\inv \circ I_\Y = F \inv %\qedhere
\end{equation*}
\end{proof}

\begin{definition}\label{L}
Let $R$ be a binary relation from a quantum set $\X$ to a quantum set $\Y$. We say that $R$ is
\begin{enumerate}
\item \emph{injective} iff $R^\dagger \circ R \leq I_\X$, and
\item \emph{surjective} iff $R \circ R^\dagger \geq I_\Y$.
\end{enumerate}
\end{definition}

As a consequence of Proposition \ref{K}, an invertible function must be both injective and surjective. Conversely, a function $F\: \X \To \Y$ that is both injective and surjective satisfies $F^\dagger \circ F = I_\X$ and $F \circ F^\dagger = I_\Y$, so it is invertible. Evidently, if $F$ is invertible in $\qRel$, then its inverse $F\inv = F^\dagger$ is also a function.

\begin{proposition}\label{L2}
Let $F$ be an invertible function from a quantum set $\X$ to a quantum set $\Y$. Then there exists an ordinary invertible function $f$ from $\At(\X)$ to $\At(\Y)$, and a family of unitaries $(u_X\: X \To f(X) \suchthat X \atomof \X)$ such that $F(X,f(X)) = \CC \cdot u_X$ for all $X \atomof \X$, with $F(X,Y) = 0$ whenever $Y \neq f(X)$. Conversely, every relation from $\X$ to $\Y$ that is of this form is an invertible function.
\end{proposition}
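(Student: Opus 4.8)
The plan is to work atomwise throughout, translating the two relational equations $F^\dagger \circ F = I_\X$ and $F \circ F^\dagger = I_\Y$ into statements about the operator subspaces $F(X,Y) \leq L(X,Y)$. These equations are available because $F$ is invertible: by Proposition \ref{K} its inverse is $F^\dagger$, so composing $F$ with $F^\dagger$ on either side yields the relevant identity relation. Unwinding Definitions \ref{E} and \ref{G}, I record that for atoms $X, X' \atomof \X$ and $Y, Y' \atomof \Y$,
$$(F^\dagger \circ F)(X, X') = \mathrm{span}\{\, t^\dagger r \suchthat \exists Y \atomof \Y,\ t \in F(X', Y),\ r \in F(X, Y)\,\},$$
$$(F \circ F^\dagger)(Y, Y') = \mathrm{span}\{\, s t^\dagger \suchthat \exists X \atomof \X,\ s \in F(X, Y'),\ t \in F(X, Y)\,\}.$$
The first equation asserts that the top span is $\CC \cdot 1_X$ when $X = X'$ and $0$ when $X \neq X'$; the second makes the symmetric assertion for the bottom span.

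First I would analyze a single nonzero subspace $F(X,Y)$. Taking $t = r \in F(X,Y)$ in the diagonal ($X=X'$) case of the first equation gives $r^\dagger r \in \CC \cdot 1_X$, and the diagonal ($Y=Y'$) case of the second gives $r r^\dagger \in \CC \cdot 1_Y$; for $r \neq 0$ both scalars are positive, so $r$ is simultaneously a scalar multiple of an isometry and of a co-isometry. This forces $\dim X = \dim Y$ and shows every nonzero $r \in F(X,Y)$ to be a scalar multiple of a unitary $X \To Y$. Then for any two elements $r_1, r_2 \in F(X,Y)$, the relation $r_1^\dagger r_2 \in \CC \cdot 1_X$ combined with $r_1$ being a scalar multiple of a unitary forces $r_2 \in \CC \cdot r_1$, so $F(X,Y)$ is at most one-dimensional and, when nonzero, equals $\CC \cdot u$ for a unitary $u \colon X \To Y$. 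I expect this to be the crux of the argument, since it is where the two conditions must be played against each other.

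Next I would pin down the combinatorial skeleton. The diagonal case of the first equation gives $\CC \cdot 1_X \neq 0$, so at least one atom $Y$ has $F(X,Y) \neq 0$; and if $F(X, Y_1)$ and $F(X, Y_2)$ were both nonzero with $Y_1 \neq Y_2$, the off-diagonal case of the second equation would make the product of the corresponding unitaries vanish, which is impossible. Hence there is exactly one such atom, defining a function $f \colon \At(\X) \To \At(\Y)$ and unitaries $u_X$ with the stated properties. Injectivity of $f$ follows by the same device from the off-diagonal case $X \neq X'$ of the first equation, and surjectivity from the diagonal case of the second, so $f$ is a bijection.

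Finally, the converse is a direct verification. Given such $f$ and $(u_X)$, define $F$ as in the statement; then $F^\dagger(Y, X) = \CC \cdot u_X^\dagger$ when $Y = f(X)$ and vanishes otherwise. Computing the two composites atomwise, using $u_X^\dagger u_X = 1_X$, $u_X u_X^\dagger = 1_{f(X)}$, and the bijectivity of $f$ to control which summands survive, one obtains $F^\dagger \circ F = I_\X$ and $F \circ F^\dagger = I_\Y$. Thus $F$ is a function whose two-sided inverse is $F^\dagger$, hence an invertible function, completing the proof.
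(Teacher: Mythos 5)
Your proposal is correct and follows essentially the same route as the paper's proof: derive the atomwise forms of $F^\dagger\circ F=I_\X$ and $F\circ F^\dagger=I_\Y$, deduce that each nonzero $F(X,Y)$ is spanned by a single unitary, use the off-diagonal/diagonal cases to extract the bijection $f$, and verify the converse directly. Your treatment is slightly more explicit in two places (the one-dimensionality of $F(X,Y)$ and the injectivity of $f$, which the paper leaves implicit), but the argument is the same.
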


\begin{proof}
As we have seen, the invertibility of a function is equivalent to the system of equations $F^\dagger \circ F = I_\X$ and $F \circ F^\dagger = I_\Y$. Equivalently,
\begin{equation*}
\bigvee_{Y \atomof \Y} F(X,Y)^\dagger \cdot F(X',Y) = I_\X(X',X)
 \qquad \text{and} \qquad 
\bigvee_{X \atomof \X} F(X,Y) \cdot F(X,Y')^\dagger = I_\Y(Y',Y),
\end{equation*}
for all $X , X' \atomof \X$, and for all $Y, Y' \atomof \Y$, respectively.
These equations imply that for all $X \atomof \X$ and $Y \atomof \Y$, we have $F(X,Y)^\dagger \cdot F(X,Y) \leq \CC \cdot 1_X$ and $F(X,Y) \cdot F(X,Y)^\dagger \leq \CC \cdot 1_Y$, so if $F(X,Y)$ is nonzero, then it is spanned by a single unitary operator.

Fix $X \atomof \X$. At least one of the spaces $F(X,Y)$ must be nonzero. Furthermore, if $F(X,Y)$ and $F(X,Y')$ are both nonzero, then both are spanned by a unitary, and we have $I_\Y(Y', Y) \geq F (X,Y) \cdot F(X, Y')^\dagger \neq 0$, so $Y' = Y$. Thus, there is a unique atom $f(X) \atomof \Y$ such that $F(X,f(X)) \neq 0$. Choose a unitary $u_X$ that spans $F(X, f(X))$. As we vary $X$, we obtain a function $f\: \At(\X) \To \At(\Y)$, with each space $F(X, f(X))$ spanned by a unitary $u_X$, and all other spaces $F(X,Y)$ vanishing.

The function $f$ is an injection by similar reasoning. To show that $f$ is a surjection, assume that there exists an atom $Y_0 \atomof \Y$ that is not in the range of $f$. For this atom, we have $F(X,Y_0) = 0$ for all $X \atomof \X$, so we obtain a contradiction: $1_Y \in I_\Y(Y_0,Y_0) =0$. Therefore, $f$ is a bijection.

It is easy to see that any function from $\X$ to $\Y$ of the given form is invertible, simply by verifying the two equations given at the beginning of this proof. In each join, at most one of the terms is nonzero, and if there is such a term, then it is spanned by the product of a unitary operator with its adjoint.
\end{proof}

Both injectivity and surjectivity have natural dual notions.

\begin{definition}\label{L3}
Let $R$ be a binary relation from a quantum set $\X$ to a quantum set $\Y$. We say that $R$ is
\begin{enumerate}
\item \emph{coinjective} iff $R \circ R^\dagger \leq I_\Y$, and
\item \emph{cosurjective} iff $ R^\dagger \circ R \geq I_\X$.
\end{enumerate}
\end{definition}

Thus, a function is a binary relation that is both coinjective and cosurjective.

\begin{definition}\label{M}
A \emph{partial function} from a quantum set $\X$ to a quantum set $\Y$ is a coinjective binary relation from $\X$ to $\Y$.
\end{definition}

Quantum sets and partial functions form a subcategory $\qPar$ of $\qRel$, and $\qSet$ is a subcategory of $\qPar$.

\begin{proposition}\label{N}
Let $F$ be a partial function from a quantum set $\X$ to a quantum set $\Y$. If $F$ is invertible in $\qRel$, then $F\inv = F^\dagger$, so $F$ is a function.
\end{proposition}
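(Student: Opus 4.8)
The plan is to reduce everything to the single claim that $F \circ F^\dagger = I_\Y$; once this is established, the identity $F\inv = F^\dagger$ drops out by a one-line computation parallel to Proposition \ref{K}. Indeed, granting $F \circ F^\dagger = I_\Y$ and using that $F$ is invertible,
$$ F^\dagger = (F\inv \circ F) \circ F^\dagger = F\inv \circ (F \circ F^\dagger) = F\inv \circ I_\Y = F\inv. $$
Moreover $F$ is then a function, since $F \circ F^\dagger = I_\Y \leq I_\Y$ is coinjectivity and $F^\dagger \circ F = F\inv \circ F = I_\X \geq I_\X$ is cosurjectivity (Definition \ref{J}). Thus all of the content sits in proving $F \circ F^\dagger = I_\Y$.

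It is worth recording why a purely diagrammatic manipulation, in the style of Proposition \ref{K}, cannot finish the job on its own. Coinjectivity $F \circ F^\dagger \leq I_\Y$ together with invertibility yields only $F^\dagger = F\inv \circ F \circ F^\dagger \leq F\inv \circ I_\Y = F\inv$, that is, the single inequality $F^\dagger \leq F\inv$; composing this relation with $F$ in either slot merely reproduces inequalities pointing the same way. The reverse inequality $F\inv \leq F^\dagger$ must therefore be extracted from the atomwise structure of the composition rather than from the compact-closed formalism alone.

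Accordingly, the main step is an atomwise argument for $F \circ F^\dagger = I_\Y$. Coinjectivity gives $(F \circ F^\dagger)(Y',Y) = 0$ for $Y' \neq Y$ and $(F \circ F^\dagger)(Y,Y) = \sum_{X \atomof \X} F(X,Y)\cdot F(X,Y)^\dagger \leq \CC \cdot 1_Y$, so each diagonal block, being a subspace of the one-dimensional space $\CC \cdot 1_Y$, is either $0$ or all of $\CC \cdot 1_Y$. I would rule out the zero case using invertibility: if $(F \circ F^\dagger)(Y_0,Y_0) = 0$ for some atom $Y_0$, then each summand $F(X,Y_0)\cdot F(X,Y_0)^\dagger$ vanishes, whence $F(X,Y_0) = 0$ for every $X \atomof \X$ (since $ff^\dagger \neq 0$ whenever $f \neq 0$); but then the $(Y_0,Y_0)$-block of $F \circ F\inv$ vanishes as well, contradicting $F \circ F\inv = I_\Y$, whose $(Y_0,Y_0)$-block is $\CC \cdot 1_{Y_0} \neq 0$. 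Hence every diagonal block equals $\CC \cdot 1_Y$, and together with the vanishing off-diagonal blocks this is exactly $F \circ F^\dagger = I_\Y$. The only delicate point, and the main obstacle, is this identification of $(F \circ F^\dagger)(Y,Y)$ with $\sum_X F(X,Y)\cdot F(X,Y)^\dagger$ and the elementary fact that such a sum of subspaces vanishes only when each $F(X,Y)$ does; the remainder is bookkeeping with Definitions \ref{E} and \ref{G} and the identity relation, after which the displayed computation above closes the proof.
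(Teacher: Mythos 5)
Your proof is correct and follows essentially the same route as the paper's: both hinge on upgrading the coinjectivity inequality $F \circ F^\dagger \leq I_\Y$ to the equality $F \circ F^\dagger = I_\Y$ using invertibility, and then finishing with a short algebraic computation. The only differences are cosmetic --- the paper asserts that $F \circ F^\dagger$ is an invertible subrelation of $I_\Y$ and hence equals $I_\Y$, whereas you unfold that assertion atomwise (via $F \circ F\inv = I_\Y$ and the fact that $ff^\dagger \neq 0$ for $f \neq 0$), and your endgame $F^\dagger = F\inv \circ (F \circ F^\dagger) = F\inv$ is marginally more direct than the paper's second sandwich computation establishing $F^\dagger \circ F \leq I_\X$.
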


\begin{proof}
The relation $F \circ F^\dagger$ is an invertible subrelation of $I_\Y$; therefore $F \circ F^\dagger = I_\Y$. Similarly, we can prove that $F^\dagger\circ F =I_\X$, if we can show that $F^\dagger \circ F \leq I_\X$. We can:
\begin{equation*}
F^\dagger \circ F = F\inv \circ F \circ F^\dagger \circ F \leq F \inv \circ I_\Y \circ F = I_\X %\qedhere
\end{equation*}
\end{proof}

Not every invertible relation is a function. Let $a \in L(\CC^2, \CC^2)$ be an invertible matrix that isn't a scalar multiple of a unitary matrix. The relation $R$ from $\Q\{\CC^2\}$ to $\Q\{\CC^2\}$ defined by $R(\CC^2, \CC^2) = \CC \cdot a $ is evidently invertible, but not coinjective.

\section{Hereditarily atomic von Neumann algebras}\label{section 5}

\begin{definition}\label{O}
Let $\X$ be a quantum set. Define:
\begin{equation*}
\ell(\X) = \prod_{X \atomof \X} L(X)
\end{equation*}
This has the structure of a $\ast$-algebra over $\CC$, equipped with the product topology.
\end{definition}

For any ordinary set $S$, the $\ast$-algebra $\ell(`S)$ is canonically isomorphic to $\ell(S) = \CC^S$. We will later show that the self-adjoint elements of $\ell(\X)$ are in canonical bijective correspondence with functions from $\X$ to $`\RR$ (Proposition \ref{FH}). Similarly, the normal elements of $\ell(\X)$ are in canonical bijective correspondence with functions from $\X$ to $`\CC$. Arbitrary elements of $\ell(\X)$ are in bijective correspondence with functions from $\X$ to a canonical quantum set $\C$ (Definition \ref{quantum c}), which intuitively consists of complex numbers whose real and imaginary parts need not be simultaneously observable.

\begin{definition}\label{P}
\begin{equation*}
\ell^\infty(\X) = \left\{ a \in \ell(\X) \, \middle|\,\sup_{X \atomof \X} \|a(X)\|_\infty <  \infty \right\}
\end{equation*}
\begin{equation*}
c_0(\X) =\left \{ a \in \ell(\X) \, \middle|\, \lim_{X \To \infty} \|a(X)\|_\infty =0 \right\}
\end{equation*}
\end{definition}

The limit above is in the sense of one-point compactification; in other words, for each $\epsilon>0$, we should have $\|a(X)\|_\infty < \epsilon$ for all but finitely many $X \atomof \X$. The $*$-algebras $\ell^\infty(\X)$ and $c_0(\X)$ are canonically represented on the $\ell^2$-direct sum of the Hilbert spaces in $\At(\X)$, isometrically for the operator norm, and the norm
\begin{equation*}
\|a \|= \sup_{X \atomof \X} \|a(X)\|_\infty.
\end{equation*}
Represented in this way, $c_0(\X)$ is a concrete C*-algebra, and $\ell^\infty(\X)$ is a von Neumann algebra. In the context of noncommutative mathematics, the C*-algebra $c_0(\X)$ is the operator algebra associated to $\X$ considered as a discrete quantum topological space, and the von Neumann algebra $\ell^\infty(\X)$ is the operator algebra associated to $\X$ considered as an atomic quantum measure space. We might venture to say that $\ell(\X)$ is the operator algebra associated to $\X$ considered as a quantum set.

\begin{definition}
A von Neumann algebra $A$ is \emph{hereditarily atomic} just in case every von Neumann subalgebra of $A$ is atomic.
\end{definition}

Recall that a von Neumann algebra is said to be \emph{atomic}, or sometimes fully atomic, if every nonzero projection is above a minimal projection. Equivalently, a von Neumann algebra is atomic if and only if every projection is the sum of some family of pairwise orthogonal minimal projections.

\begin{proposition}\label{Q}
Let $A$ be a von Neumann algebra. The following are equivalent:
\begin{enumerate}
\item $A$ is hereditarily atomic
\item $A$ is isomorphic to $\ell^\infty(\X)$ for some quantum set $\X$
\item every self-adjoint operator $a$ in $A$ is diagonalizable
\end{enumerate}
\end{proposition}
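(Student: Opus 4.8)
The plan is to establish the two equivalences (1)$\Leftrightarrow$(3) and (2)$\Leftrightarrow$(3), treating condition (3) as the hub; the only substantial implication is (3)$\Rightarrow$(2), and everything else is a one-line observation or a formal consequence. Throughout I would lean on the elementary criterion that a self-adjoint operator $a$ is diagonalizable if and only if the abelian von Neumann algebra $W^*(a)$ it generates is atomic: if $a = \sum_\lambda \lambda\, E(\{\lambda\})$ with the eigenprojections summing to $1$, then $W^*(a) \cong \ell^\infty$ of the eigenvalues, and conversely the minimal projections of an atomic $W^*(a)$ are exactly the spectral atoms $E(\{\lambda\})$, which must then sum to $1$, so the eigenvectors span. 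This criterion is intrinsic, so diagonalizability of $a$ depends only on the pair $(A,a)$ up to $*$-isomorphism.

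For (1)$\Rightarrow$(3): given self-adjoint $a \in A$, the algebra $W^*(a)$ is an abelian von Neumann subalgebra, hence atomic by hereditary atomicity, so $a$ is diagonalizable by the criterion. For (2)$\Rightarrow$(3): I would pass to the concrete model $A = \ell^\infty(\X)$ acting on $\bigoplus_{X \atomof \X} X$, where a self-adjoint element is a bounded family $(a(X))$ of self-adjoint operators on the finite-dimensional atoms $X$; each is diagonalized by an orthonormal basis of $X$, and the union of these bases over $X \atomof \X$ is an orthonormal eigenbasis for $a$. (The isomorphism invariance from the previous paragraph is what makes the concrete model legitimate here.)

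The heart is (3)$\Rightarrow$(2). First I would record the commutative case: an abelian von Neumann algebra in which every self-adjoint element is diagonalizable is atomic, since a diffuse part $L^\infty(\Omega,\mu)$ with $\mu$ atomless supports, by Maharam's theorem, a real function of atomless distribution, i.e.\ a self-adjoint element with no eigenvalues. Granting this, I would show $A$ is atomic: for a nonzero projection $p \in A$, take a maximal abelian von Neumann subalgebra $C$ of $pAp$; all its self-adjoint elements lie in $A$ and so are diagonalizable, whence $C$ is atomic and has a minimal projection $q \leq p$. Since $Cq$ is maximal abelian in $qAq$ and equals $\CC q$, we get $qAq = \CC q$, so $q$ is minimal in $A$; thus every nonzero projection dominates a minimal one, and $A$ is atomic. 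By the structure of atomic von Neumann algebras \cite{Blackadar17}*{IV.2.2.1}, $A \cong \bigoplus_i L(H_i)$, an $\ell^\infty$-direct sum of type I factors; were some $H_i$ infinite-dimensional, the corresponding central summand $L(H_i) \subseteq A$ would contain a self-adjoint operator with no eigenvalues, hence non-diagonalizable, contradicting (3). So every $H_i$ is finite-dimensional and $A \cong \ell^\infty(\X)$ with $\At(\X) = \{H_i\}$. Finally (3)$\Rightarrow$(1) is then formal: condition (3) is inherited by every von Neumann subalgebra $B$ (its self-adjoint elements are self-adjoint in $A$, hence diagonalizable), so by (3)$\Rightarrow$(2) each $B \cong \ell^\infty(\Y)$ is atomic, i.e.\ $A$ is hereditarily atomic.

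I expect the main obstacle to be the commutative lemma together with the reduction of $A$ to atomicity. The delicate points are that a diffuse abelian algebra genuinely carries a continuous-spectrum operator even when the underlying Hilbert space is nonseparable, where one leans on Maharam's classification of measure algebras, and the corner bookkeeping $C \rightsquigarrow Cq$ that upgrades a minimal projection of a maximal abelian subalgebra to a minimal projection of $A$. Once atomicity is in hand, excluding infinite-dimensional factors is routine, since those factors are exactly the source of non-diagonalizable self-adjoint operators.
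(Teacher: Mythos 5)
Your proof is correct, and its technical core coincides with the paper's: both arguments ultimately rest on (i) the fact that a diffuse abelian von Neumann algebra carries a non-diagonalizable self-adjoint element, (ii) passage to a maximal abelian subalgebra $C$ of a corner $pAp$ and the observation that a minimal projection of $C$ is minimal in the corner, and (iii) the structure theorem for atomic von Neumann algebras together with the exclusion of infinite type I factors. What differs is the logical organization. The paper proves the cycle $(1)\Rightarrow(2)\Rightarrow(3)\Rightarrow(1)$, putting the structure theory into $(1)\Rightarrow(2)$ and the maximal-abelian-corner argument into $(3)\Rightarrow(1)$, which it runs directly inside an arbitrary von Neumann subalgebra $B$. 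You instead use $(3)$ as a hub: your criterion that $a$ is diagonalizable iff $W^*(a)$ is atomic makes $(1)\Rightarrow(3)$ a one-liner (the paper obtains that implication only by composing $(1)\Rightarrow(2)\Rightarrow(3)$), your $(3)\Rightarrow(2)$ concatenates the paper's two substantive steps applied to $A$ itself, and $(3)\Rightarrow(1)$ then reduces to the formal remark that condition $(3)$ is inherited by subalgebras --- a genuine tidying, since the paper must rerun the corner argument inside each $B$. The one place you pay is the commutative lemma: you invoke Maharam's theorem to manufacture a self-adjoint element with atomless distribution in a diffuse abelian algebra, whereas the paper gets by with the more elementary fact that a non-atomic abelian von Neumann algebra contains a copy of $L^\infty([0,1],dt)$, whose identity function is already non-diagonalizable; both suffice, but yours is a heavier hammer for the same nail.
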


We call a self-adjoint bounded operator $a$ \emph{diagonalizable} just in case there is a family of pairwise orthogonal projections $(p_\alpha \suchthat \alpha \in \RR)$ such that $a = \sum_\alpha \alpha p_\alpha$, with convergence in the ultraweak topology. Observe that if $a$ is diagonalizable, then the family $(p_\alpha)$ is unique, and each projection $p_\alpha$ is a spectral projection of $a$. As a consequence, $a$ is diagonalizable in $A$ if and only if it is diagonalizable in any given von Neumann subalgebra $B$ of $A$ that contains $a$.

\begin{proof}
$(1) \Yields (2)$. Let $A$ be a hereditarily atomic von Neumann algebra. The center of $A$ is atomic, so $A$ is an $\ell^\infty$-direct sum of factors. Every factor that is not finite type I has a von Neumann subalgebra isomorphic to $L^\infty([0,1],dt)$, which is not atomic, so $A$ must be a direct sum of finite type I factors. Choosing an irreducible representation for each factor, we obtain a quantum set $\X$ such that $\ell^\infty(\X) \iso A$.

$(2) \Yields (3)$. Let $A$ be isomorphic to $\ell^\infty(\X)$, and let $a \in A$ be self adjoint. Without loss of generality, we assume $A =\ell^\infty(\X)$. Each self-adjoint operator $a(X)$ can be diagonalized in $L(X)$ by the spectral theorem for self-adjoint matrices. Altogether, we have a diagonalization of $a$; a bounded net converges ultraweakly in an $\ell^\infty$-direct sum of von Neumann algebras if and only if it coverges ultraweakly in each summand.

$(3) \Yields (1)$. Assume that every self-adjoint operator in $A$ is diagonalizable. Let $B$ be a von Neumann subalgebra of $A$, and let $p$ be a nonzero projection in $B$. Choose a maximal abelian von Neumann subalgebra $C$ of the von Neumann algebra $pBp$. If $C$ has a von Neumann subalgebra isomorphic to $L^\infty([0,1],dt)$, then $C$ contains a nondiagonalizable self-adjoint operator, contradicting our assumption on $A$; indeed, the identity function is a nondiagonalizable element of $L^\infty([0,1],dt)$, and diagonalizability in a von Neumann subalgebra is equivalent to diagonalizability in the larger von Neumann algebra, as we observed just above the proof. Therefore, $C$ is atomic, and in particular, it contains a minimal projection $q$. The projection $q$ is also a minimal projection in $pBp$, because $C$ is maximal abelian. So, $q$ is a minimal projection in $B$ that is below $p$. We conclude that $B$ is atomic, and more generally, that $A$ is hereditarily atomic.
\end{proof}

\begin{definition}
We write $\Mstar0$ for the category of hereditarily atomic von Neumann algebras and normal $*$-homomorphisms. We write $\Mstar1$ for the category of hereditarily atomic von Neumann algebras and \emph{unital} normal $*$-homomorphisms.
\end{definition}

The letter ``M'' is intended to suggest matrix algebras.

\section{Three perspectives on a function}\label{section 6}

We now show that each partial function $F$ from a quantum set $\X$ to a quantum set $\Y$ induces a $*$-homomorphism $F^\star$ from $\ell(\Y)$ to $\ell(\X)$. The immediate significance of this construction is that it yields a contravariant equivalence of categories, from the category $\qSet$ of quantum sets and functions, to the category $\Mstar1$ of hereditarily atomic von Neumann algebras and unital normal $*$-homomorphisms (Theorem \ref{Q10}), placing our objects and morphisms within the usual framework of noncommutative mathematics, and enabling the use of operator-algebraic techniques.

If $\X$ and $\Y$ are quantum sets whose atoms are all one-dimensional, then a function $F$ from $\X$ to $\Y$ is effectively an ordinary function between ordinary sets, and $F^\star$ is effectively just precomposition by $F$. The unital $*$-homomorphism $F^\star$ can be viewed as precomposition by $F$ in the general case too (Theorem \ref{opalgebra}).

Let $\X$ and $\Y$ be arbitrary quantum sets. Let $F$ be an arbitrary partial function from $\X$ to $\Y$, and consider a pair of atoms $X \atomof \X$ and $Y \atomof \Y$. By the definition of partial function, $F \circ F^\dagger \leq I_\Y$, so
$F(X,Y) \cdot F(X,Y)^\dagger \leq \CC \cdot 1_Y$. We now recall a result classifying operator spaces of this kind \cite{SinhaGoswami07}*{4.2.7}.

\begin{lemma}\label{Q1}
Let $X$ and $Y$ be finite-dimensional Hilbert spaces. There are canonical bijective correspondences between
\begin{enumerate}
\item linear spaces $V$ of operators from $X$ to $Y$, satisfying $V \cdot V^\dagger \leq \CC \cdot 1_Y$,
\item coisometries $w\: X \To  Y \tensor H$, with $H$ a Hilbert space up to unitary equivalence, and
\item $*$-homomorphisms $\rho \: L(Y) \To L(X)$.
\end{enumerate}
\end{lemma}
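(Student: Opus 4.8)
The plan is to establish the two bijections $(1)\leftrightarrow(2)$ and $(2)\leftrightarrow(3)$ separately, with the coisometry of (2) serving as the common bridge (a finite-dimensional Stinespring dilation); composing them yields the correspondence $(1)\leftrightarrow(3)$ that the surrounding text actually uses. Throughout I would note that, because $X$ is finite-dimensional and $w\: X \To Y \tensor H$ is a coisometry, hence surjective, the space $H$ is automatically finite-dimensional, so no analytic subtleties arise.

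For $(1)\to(2)$, I would first observe that the hypothesis $V \cdot V^\dagger \leq \CC \cdot 1_Y$ lets us define a scalar $\langle v, v'\rangle$ by the equation $v (v')^\dagger = \langle v, v'\rangle\, 1_Y$. Since $v v^\dagger \geq 0$ with equality only when $v = 0$, and since $(v(v')^\dagger)^\dagger = v'v^\dagger$, this is a genuine inner product, making $V$ a finite-dimensional Hilbert space. Choosing an orthonormal basis $v_1, \dots, v_m$ of $V$ --- so that $v_i v_j^\dagger = \delta_{ij} 1_Y$ --- and letting $e_1, \dots, e_m$ be an orthonormal basis of a Hilbert space $H$, I would define the coisometry by $w(x) = \sum_i v_i(x) \tensor e_i$. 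A direct computation gives $w^\dagger(y \tensor e_j) = v_j^\dagger(y)$ and hence $w w^\dagger = 1_{Y \tensor H}$, so $w$ is a coisometry. Passing to a different orthonormal basis replaces $w$ by $(1_Y \tensor u)w$ for a unitary $u$ on $H$; this is exactly the ambiguity absorbed by the phrase ``up to unitary equivalence,'' and it is also where the unavoidable conjugation in identifying $H$ with $V$ is hidden. For the inverse $(2)\to(1)$, I would send $w$ to the space $V = \{(1_Y \tensor \phi)w \suchthat \phi \in H^*\}$ of its ``slices''; the computation $(1_Y \tensor \phi)w w^\dagger (1_Y \tensor \psi^\dagger) = (\phi\psi^\dagger)\, 1_Y$ shows $V\cdot V^\dagger \leq \CC \cdot 1_Y$, and slicing the explicit $w$ above recovers $\mathrm{span}\{v_i\} = V$, so the two assignments are mutually inverse.

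For $(2)\to(3)$, I would set $\rho(a) = w^\dagger (a \tensor 1_H) w$. The coisometry identity $w w^\dagger = 1_{Y \tensor H}$ immediately gives multiplicativity, $\rho(a)\rho(b) = w^\dagger(a\tensor 1_H) w w^\dagger (b \tensor 1_H) w = \rho(ab)$, while $\dagger$-preservation and linearity are formal; note $\rho(1_Y) = w^\dagger w$ is a projection, so $\rho$ need not be unital, consistent with the conventions of this paper. The inverse $(3)\to(2)$ is the one genuinely structural step, and I expect it to be the main obstacle: I must reconstruct $w$ from an arbitrary, possibly degenerate, $*$-homomorphism $\rho\: L(Y) \To L(X)$. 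Here I would invoke the representation theory of the simple algebra $L(Y) \iso M_{\dim Y}(\CC)$: fixing matrix units $E_{ij}$ for $L(Y)$ adapted to an orthonormal basis $y_1, \dots, y_n$ of $Y$, the images $\rho(E_{ii})$ are pairwise orthogonal projections summing to $\rho(1_Y)$, and the $\rho(E_{ij})$ implement unitary identifications among their ranges. Setting $H = \rho(E_{11})X$ and $X_0 = \rho(1_Y)X$, the assignment $y_i \tensor h \mapsto \rho(E_{i1})h$ is a unitary $Y \tensor H \iso X_0$, and $w$ is the orthogonal projection $X \To X_0$ followed by the inverse of this unitary. One then checks $w$ is a coisometry with $w^\dagger w = \rho(1_Y)$ and $\rho(a) = w^\dagger(a \tensor 1_H) w$, the latter by comparing both sides on $X_0$ via the intertwining relation and observing that both vanish on $X_0^\perp = \ker\rho$.

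Finally I would verify that $(2)\to(3)\to(2)$ and $(3)\to(2)\to(3)$ are identities up to the allowed unitary equivalence of $H$: starting from $w$, the reconstructed nondegenerate subspace is $\rho(1_Y)X = (\ker w)^\perp$, on which $w$ restricts to a unitary onto $Y \tensor H$, so the recovered coisometry agrees with $w$; the reverse round trip is analogous. The only real content beyond bookkeeping is the structure theorem for representations of a matrix algebra and the careful treatment of the degenerate part, where everything must vanish; all remaining verifications are the routine linear-algebra computations indicated above, which I would not belabor.
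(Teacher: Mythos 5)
Your proposal is correct, but it is organized differently from the paper's argument, so a comparison is worth recording. The paper runs around the cycle $(1)\to(2)\to(3)\to(1)$: the first two constructions are the same as yours (the operator-valued inner product on $V$ and the dilation $\rho(b)=w^\dagger(b\tensor 1)w$), but the third arrow is the \emph{intertwiner-space} construction $\rho \mapsto V = \{v \suchthat bv = v\rho(b) \text{ for all } b\}$, for which the bound $V\cdot V^\dagger \leq L(Y)' = \CC\cdot 1_Y$ is a one-line commutant computation. The paper then shows the composite of the three maps is the identity and separately argues that each individual map is surjective (citing ``the representation theory of $L(Y)$'' for the second and exhibiting the slice map as a right inverse for the first), which forces all three to be bijections. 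You instead build explicit two-sided inverses for $(1)\leftrightarrow(2)$ and $(2)\leftrightarrow(3)$; your slice map $(2)\to(1)$ is exactly the paper's right inverse, and your $(3)\to(2)$ step unwinds, via matrix units $E_{ij}$ and the unitary $y_i\tensor h\mapsto\rho(E_{i1})h$ onto $\rho(1_Y)X$, precisely the representation-theoretic fact the paper invokes without proof. What your route buys is an explicit Stinespring-type reconstruction of the coisometry and a self-contained treatment of the degenerate part $(1-\rho(1_Y))X$; what the paper's route buys is that its $(3)\to(1)$ arrow is the intertwiner space, which is the formula actually reused later (in theorem \ref{Q3}) to pass from $*$-homomorphisms back to binary relations. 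The two composites agree, since each slice $(1\tensor\phi)w$ satisfies $b\cdot(1\tensor\phi)w = (1\tensor\phi)w\cdot\rho(b)$, so nothing is lost either way. The only cosmetic slip is your identification $X_0^\perp = \ker\rho$ (the kernel of a $*$-homomorphism is an ideal of $L(Y)$, not a subspace of $X$); you clearly mean the degenerate subspace $(1-\rho(1_Y))X$, and the argument is unaffected.
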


Thus, we identify two coisometries $w\: X \To  Y \tensor H$ and $w'\: X \To Y \tensor H' $ iff there is a unitary operator $u\: H \To H'$ such that $w' = (1 \tensor u )w$.

\begin{proof}
We describe the three constructions. Let $V$ be any linear space of operators from $X$ to $Y$, satisfying $V \cdot V^\dagger \leq \CC \cdot 1_Y$. We have an inner product on $V$ defined by $v {v'}^\dagger = (v'| v) \cdot 1_Y$. Choose an orthonormal basis $(v_1, \ldots, v_n)$ for this inner product. Let $(e_1, \ldots, e_n)$ be the standard basis of $\CC^n$, and define $w\: X \To  Y\tensor\CC^n $ by $w(x) = \sum_{i=1}^n v_i(x)\tensor e_i$, for all $x \in X$. It is easy to check that $w^\dagger(y \tensor e_i ) = v_i^\dagger(y)$, for all $y \in Y$. A short computation shows that $w$ is a coisometry:
\begin{equation*}\< w^\dagger(y \tensor e_i) | w^\dagger(y' \tensor e_j) \> = \< v_i^\dagger y | v_j^\dagger y' \> = \< y | v_i v_j^\dagger y' \> =  \< y | y' \> \cdot \delta_{ij}= \< y \tensor e_i  | y' \tensor  e_j \>
\end{equation*}
Here, $\delta_{ij}$ is the Kronecker delta symbol. Adjusting that computation slightly, we find that if $w'$ is obtained using another choice of orthonormal basis, then $\< w^\dagger(y \tensor e_i) | {w'}^\dagger(y' \tensor e_j) \> =  \< y | y' \> u_{ij}$ for some unitary matrix $(u_{ij})$, so $w {w'}^\dagger = (1 \tensor u)$ for some unitary operator $u$ on $\CC^n$.

Let $w\: X \To Y \tensor H$ be any coisometry. Define $\rho\: L(Y) \To L(X)$ by $\rho(b) = w^\dagger (b \tensor 1) w$. It is easy to check that $\rho$ is a $*$-homomorphism, and that $\rho$ is the same if $H$ is replaced by a unitarily equivalent Hilbert space.

Let $\rho\: L(Y) \To L(X)$ be any $*$-homomorphism. Define $V$ to be the space of linear operators $v$ from $X$ to $Y$ such that $b v = v \rho(b)$ for all $b \in L(Y)$. It follows immediately that $V \cdot V^\dagger \subsetof L(Y)' = \CC \cdot 1_Y$.

Fix $V$, a linear space of operators from $X$ to $Y$ satisfying $V \cdot V^\dagger \leq \CC \cdot 1$. Performing the first two constructions, we obtain a $*$-homomorphism $\rho$ defined by $\rho(b) = \sum_{i=1}^n v_i^\dagger b v_i$, where $(v_1, \ldots, v_n)$ is some orthonormal basis for $V$. It is easy to check that each basis element $v_j$ satisfies $v_j \rho(b) = b v_j$, so $V$ is a subspace of the linear space $\tilde V$ constructed from $\rho$ via the third construction. Conversely, each operator $\tilde v$ in $\tilde V$ is in $V$ because
\begin{equation*}
\tilde v = 1 \cdot \tilde v = \tilde v \cdot \rho(1) = \sum_{i=0}^n \tilde v v_i^\dagger v_i = \sum_{i=0}^n (v_i|\tilde v)v_i.
\end{equation*}
Therefore $\tilde V = V$.

Thus, the three constructions compose to the identity. In particular, the third construction is surjective. The second construction is also surjective, by the representation theory of $L(Y)$. The first construction is surjective, because the construction $w \mapsto \mathrm{span}\{ (1 \tensor \hat e_i	) w\suchthat 1 \leq i \leq n  \}$, where $\hat e_i = \< e_i | \, \cdot\,\>$, is easily seen to be a right inverse. It follows immediately that all three constructions are bijective.
\end{proof}

Lemma \ref{Q1} suggests three different ways of looking at a function between quantum sets: as a relation, as a family of coisometries, and as a $*$-homomorphism. We use the term ``fission'' for the second of these three notions, because a function from a quantum set $\X$ to a quantum set $\Y$ splits each atom of $\X$ into finitely many atoms of $\Y$, and the corresponding family of coisometries makes this splitting explicit.

\begin{definition}\label{Q2}
Let $\X$ and $\Y$ be quantum sets. A \emph{partial fission} $f$ from $\X$ to $\Y$ is a family of finite-dimensional Hilbert spaces $(H_X^Y\suchthat X \atomof \X, Y \atomof \Y)$, together with a family of coisometries $(f_X^Y\: X \To Y \tensor H_X^Y\suchthat X \atomof \X,\, Y \atomof \Y)$ such that $(f_X^{Y_1})\cdot(f_X^{Y_2})^\dagger= 0$ whenever $Y_1 \neq Y_2$. We identify partial fissions $f$ and $f'$ from $\X$ to $\Y$ whenever there is a family of unitary operators $(u_X^Y\: H_X^Y \to{H'}\vphantom{f}_X^Y\suchthat X \atomof \X,\, Y \atomof \Y)$ such that ${f'}\vphantom{f}_X^Y = (1 \tensor u_X^Y) f_X^Y$ for all $X \atomof \X$ and $Y \atomof \Y$.
\end{definition}

The orthogonality condition implies that for each $X \atomof \X$, we have $\mathrm{dim} (H_X^Y) = 0$ for all but finitely many $Y \atomof \Y$. Intuitively, the partial fission splits each atom $X \atomof \X$ into atoms of $\Y$, with each atom $Y \atomof \Y$ occurring $\mathrm{dim}(H_X^Y)$ times. It is possible to formulate a definition that is even closer to this intuition, but it would obscure that, for example, the many orthogonal decompositions of a two-dimensional Hilbert space $X$ into subspaces isomorphic to $\CC$ are equivalent in the sense that they describe the same function from $\Q\{X\}$ to $\Q\{\CC\}$. In general, the support projections of the coisometries $(f_X^Y \suchthat Y \atomof \Y)$ need not sum to the identity on $X$; a part of the atom $X$ may be simply lost. For each $X \atomof \X$, we only have
\begin{equation*}
\dim(X) \geq \sum_{Y \atomof \Y} \dim (Y) \cdot \dim(H_X^Y).
\end{equation*}

\begin{theorem}\label{Q3}
Let $\X$ and $\Y$ be quantum sets. There are canonical bijective correspondences between
\begin{enumerate}
\item partial functions $F$ from $\X$ to $\Y$,
\item partial fissions $f$ from $\X$ to $\Y$, and
\item $*$-homomorphisms $\phi\: \ell(\Y) \To \ell(\X)$, continuous for the product topologies.
\end{enumerate}
Explicitly, the bijections are given by the following constructions:
\begin{itemize}
\item Let $F$ be a partial function from $\X$ to $\Y$. For each $X \atomof \X$, and each $Y \atomof \Y$, choose an orthonormal basis $B_X^Y$ of $F(X,Y)$ for the inner product defined by $(v | v') 1_Y = v' \cdot v^\dagger$. Equip $F(X,Y)^\dagger$ with the corresponding inner product, defined by $(h| h') 1_Y = h^\dagger \cdot h' $.
Then the family $(f_X^Y \: X \To Y \tensor F(X,Y)^\dagger \suchthat X \atomof \X,\, Y \atomof \Y)$ defined by
\begin{equation*}
f_X^Y(x) = \sum_{v \in B_X^Y}  v(x)\tensor v^\dagger
\end{equation*}
is a partial fission from $\X$ to $\Y$.
\item Let $f$ be a partial fission from $\X$ to $\Y$. Then the map $\phi\: \ell(\Y) \To \ell(\X)$ defined by
\begin{equation*}
\phi(b)(X) = \sum_{Y \atomof \Y} (f_X^Y)^\dagger ( b(Y) \tensor 1) (f_X^Y)
\end{equation*}
is a continuous $*$-homomorphism.
\item Let $\phi\: \ell(\Y) \To \ell(\X)$ be a continuous $*$-homomorphism. Then the binary relation $F$ from $\X$ to $\Y$ defined by
\begin{equation*}
F(X,Y) = \{v \in L(X, Y) \suchthat b(Y) \cdot v = v \cdot \phi(b)(X) \text{ for all }b \in \ell(\Y)\}
\end{equation*}
is a partial function from $\X$ to $\Y$.
\end{itemize}
Furthermore, the three constructions compose to the identity in each cyclical order.
\end{theorem}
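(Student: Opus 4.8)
The plan is to assemble all three constructions atomwise out of Lemma \ref{Q1}, and then to control the two genuinely global features that Lemma \ref{Q1} does not see: the orthogonality condition relating different atoms $Y \atomof \Y$ over a fixed $X \atomof \X$, and the continuity of $\phi$ for the product topology. First I would unpack the coinjectivity condition $F \circ F^\dagger \leq I_\Y$ defining a partial function (definitions \ref{M} and \ref{E}). A direct computation with definition \ref{E} gives $(F \circ F^\dagger)(Y_1, Y_2) = \sum_{X \atomof \X} F(X, Y_2) \cdot F(X, Y_1)^\dagger$, so that $F \circ F^\dagger \leq I_\Y$ is equivalent to the conjunction of two conditions: first, $F(X,Y) \cdot F(X,Y)^\dagger \leq \CC \cdot 1_Y$ for all $X \atomof \X$ and $Y \atomof \Y$; and second, $F(X,Y_1)\cdot F(X,Y_2)^\dagger = 0$ for all $X \atomof \X$ and all distinct $Y_1, Y_2 \atomof \Y$. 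The first of these is exactly the hypothesis of Lemma \ref{Q1} applied to the pair of atoms $(X,Y)$; the one-dimensionality of $\CC \cdot 1_Y$ is what allows the sum over $X$ in the diagonal term to collapse to the individual conditions.

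With the first condition in hand, the correspondence between (1) and (2) is Lemma \ref{Q1} applied at each pair of atoms: the explicit formula $f_X^Y(x) = \sum_{v \in B_X^Y} v(x)\otimes v^\dagger$ is precisely the Lemma's construction (1)$\to$(2) with $H_X^Y = F(X,Y)^\dagger$, and the stated inner product is the one making $f_X^Y$ a coisometry. The only extra point is that the second condition above is equivalent to the fission orthogonality $(f_X^{Y_1})(f_X^{Y_2})^\dagger = 0$ of definition \ref{Q2}, which I would verify by evaluating $(f_X^{Y_1})(f_X^{Y_2})^\dagger$ on basis vectors and recognizing its entries as elements of $F(X,Y_1)\cdot F(X,Y_2)^\dagger$. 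For the passage (2)$\to$(3), each summand $b \mapsto (f_X^Y)^\dagger(b(Y)\otimes 1)f_X^Y$ is the Lemma's (2)$\to$(3) homomorphism $\rho_X^Y\colon L(Y)\To L(X)$; the sum over $Y$ is again a $*$-homomorphism because the support projections $p_X^Y = (f_X^Y)^\dagger f_X^Y$ are pairwise orthogonal. This orthogonality follows from the fission condition, since $p_X^{Y_1} p_X^{Y_2} = (f_X^{Y_1})^\dagger\bigl(f_X^{Y_1}(f_X^{Y_2})^\dagger\bigr)f_X^{Y_2}=0$, and it makes the cross terms in $\bigl(\sum_Y \rho_X^Y\bigr)(a)\bigl(\sum_Y \rho_X^Y\bigr)(b)$ vanish. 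The orthogonality also forces $H_X^Y = 0$ for all but finitely many $Y$, as noted after definition \ref{Q2}, so the sum is finite and $\phi$ is continuous: $\phi(b)(X)$ depends on only finitely many coordinates of $b$.

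The step I expect to require the most care is the return (3)$\to$(1), where I must run the continuity argument in reverse. Given a continuous $*$-homomorphism $\phi$, write $\phi_X(b)=\phi(b)(X)$, let $e_Y \in \ell(\Y)$ be the coordinate idempotent supported at $Y$, and let $\iota_Y\colon L(Y)\To\ell(\Y)$ be the corresponding inclusion. Since the $e_Y$ are pairwise orthogonal projections, the $q_Y := \phi_X(e_Y)$ are pairwise orthogonal projections in the finite-dimensional algebra $L(X)$, so only finitely many are nonzero. Continuity of $\phi_X$ then yields the finite orthogonal decomposition $\phi_X(b)=\sum_Y \phi_X(\iota_Y(b(Y)))$, because the finite truncations of $b$ converge to $b$ in the product topology. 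I would then show that the defining condition $b(Y)v = v\,\phi(b)(X)$ for all $b$, restricted to $b=\iota_Y(a)$, reads $av = v\,(\phi_X\circ\iota_Y)(a)$ for all $a\in L(Y)$, and that the full condition is in fact equivalent to this restricted one: taking $a=1_Y$ gives $v=vq_Y$, and then $vq_{Y'}=0$ for $Y'\neq Y$ together with the left-support identity $\phi_X(\iota_{Y'}(a'))=q_{Y'}\phi_X(\iota_{Y'}(a'))$ kills the off-diagonal contributions. Hence $F(X,Y)$ is exactly the space produced by the Lemma's construction (3)$\to$(1) from the $*$-homomorphism $\phi_X\circ\iota_Y\colon L(Y)\To L(X)$, so it satisfies $F(X,Y)F(X,Y)^\dagger\leq \CC\cdot 1_Y$; and the relation $q_{Y_1}q_{Y_2}=0$ gives $F(X,Y_1)F(X,Y_2)^\dagger=0$, so $F$ is indeed a partial function.

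Finally, because Lemma \ref{Q1} already establishes that its three atomwise constructions compose to the identity in each cyclic order, and because the three global constructions of the theorem are, pair of atoms by pair of atoms, literally these atomwise constructions --- with the extra global data, namely the $Y$-indexing and the continuity, matched up by the orthogonality and finite-support observations above --- the global constructions likewise compose to the identity cyclically. The residual work is bookkeeping inherited directly from Lemma \ref{Q1}: that the designated inner product on $H_X^Y=F(X,Y)^\dagger$ is the coisometry inner product, and that all identifications are independent of the chosen orthonormal bases $B_X^Y$.
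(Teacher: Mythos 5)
Your proposal is correct and follows essentially the same route as the paper: apply Lemma \ref{Q1} atomwise, show the coinjectivity of $F$, the orthogonality of the coisometries $f_X^{Y}$, and the orthogonality of the ranges of the component homomorphisms are equivalent conditions, and then globalize over $Y \atomof \Y$ using the finiteness forced by orthogonality. The only difference is that you re-derive inline (via the coordinate idempotents $e_Y$ and the support projections $q_Y$) what the paper factors out as Lemma \ref{Q3.1}, namely the bijection between continuous $*$-homomorphisms $\ell(\Y)\To\ell(\X)$ and orthogonal families of component homomorphisms.
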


\begin{proof}
Lemma \ref{Q1} yields canonical bijective correspondences between
\begin{enumerate}
\item families of subspaces $(F(X,Y) \leq L(X,Y) \suchthat X \atomof \X, Y \atomof \Y)$ satisfying $F(X,Y) \cdot F(X,Y)^\dagger \leq \CC \cdot 1_Y$,
\item families of coisometries $(f_{X}^{Y}\: X \To Y \tensor H_{X}^{Y} \suchthat X \atomof \X,\, Y \atomof \Y)$, up to unitary equivalence of the coefficient Hilbert spaces $H_X^Y$, and
\item families of $*$-homomorphisms $(\phi_X^Y\: L(Y) \To L(X) \suchthat X \atomof \X, Y \atomof \Y)$.
\end{enumerate}
We have three equivalent orthogonality conditions under these correspondences:
\begin{enumerate}
\item $F(X, Y_1) \cdot F(X, Y_2)^\dagger = 0$ whenever $Y_1 \neq Y_2$
\item $(f_X^{Y_1})\cdot(f_X^{Y_2})^\dagger= 0$ whenever $Y_1 \neq Y_2$
\item $\phi_X^{Y_1}(L(Y_1)) \cdot \phi_X^{Y_2}(L(Y_2)) = 0 $ whenever $Y_1 \neq Y_2$
\end{enumerate}
The implications $(1) \Rightarrow (2)$ and $(2)\Rightarrow (3)$ are immediate. Therefore, assume $(3)$. For all $v_1 \in F(X, Y_1)$ and $v_2 \in F(X,Y_2)$, we find that $v_1 \cdot v_2^\dagger = 1 \cdot v_1 \cdot v_2^\dagger \cdot 1 = v_1 \cdot \phi_X^{Y_1}(1) \cdot \phi_X^{Y_2}(1) \cdot v_2^\dagger = 0$, whenever $Y_1 \neq Y_2$. Thus, $(3) \Rightarrow (1)$.

The families of subspaces satisfying condition (1) are exactly the partial functions from $\X$ to $\Y$, essentially by definition. The families of coisometries satisfying condition (2) are exactly the partial fissions from $\X$ to $\Y$, by definition. The families of $*$-homomorphisms satisfying condition (3) correspond bijectively to the continuous $*$-homomorphisms $\phi:\ell(\Y) \To \ell(\X)$ via the equation
$\phi(b)(X) = \sum_{Y \atomof \Y} \phi_X^Y(b(Y))$
for $X \atomof \X$, by Lemma \ref{Q3.1}.
\end{proof}

\begin{lemma}\label{Q8}
Let $\X$ and $\Y$ be quantum sets, and let $F$ be a partial function from $\X$ to $\Y$. Let $f$ be the corresponding partial fission, and let $\phi$ be the corresponding $*$-homomorphism, in the sense of Theorem \ref{Q3}.
Then, the following are equivalent
\begin{enumerate}
\item $F^\dagger \circ F \geq I$
\item For all $X \atomof \X$, we have $\sum_{Y \atomof \Y} {f_X^{Y}}^\dagger f_X^Y = 1$.
\item $\phi(1) = 1$ 
\end{enumerate}
\end{lemma}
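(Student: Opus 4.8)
The plan is to route everything through the explicit formulas of theorem~\ref{Q3}, observing that conditions (2) and (3) are literally the same statement once the fission is computed, so that the only real content is their equivalence with (1).

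First I would record the shape of each condition. Unwinding definition~\ref{E} of composition together with $F^\dagger(Y,X)=F(X,Y)^\dagger$, one gets $(F^\dagger\circ F)(X,X)=\mathrm{span}\{v^\dagger w\suchthat \exists Y\atomof\Y\: v,w\in F(X,Y)\}$; since $I_\X(X,X')=0$ off the diagonal, the inequality $F^\dagger\circ F\geq I_\X$ holds automatically off the diagonal, and on the diagonal it says precisely that $1_X$ lies in this span for every $X\atomof\X$. That is condition (1). Next, from the fission formula $f_X^Y(x)=\sum_{v\in B_X^Y}v(x)\tensor v^\dagger$ and the adjoint convention fixed in lemma~\ref{Q1} (so that $(f_X^Y)^\dagger(y\tensor v^\dagger)=v^\dagger(y)$), a one-line computation gives $(f_X^Y)^\dagger f_X^Y=\sum_{v\in B_X^Y}v^\dagger v$. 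Writing $P_X=\sum_{Y\atomof\Y}(f_X^Y)^\dagger f_X^Y=\sum_{Y}\sum_{v\in B_X^Y}v^\dagger v$, condition (2) is the assertion $P_X=1_X$ for all $X$. Because $\phi(1)(X)=\sum_Y(f_X^Y)^\dagger(1_Y\tensor 1)f_X^Y=\sum_Y(f_X^Y)^\dagger f_X^Y=P_X$, condition (3) is the identical assertion, so $(2)\Leftrightarrow(3)$ is immediate.

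It remains to prove $(1)\Leftrightarrow(2)$. The direction $(2)\Rightarrow(1)$ is trivial: each summand $v^\dagger v$ already lies in the span describing condition (1), hence so does the finite sum $P_X$, and $P_X=1_X$ then places $1_X$ in that span. For $(1)\Rightarrow(2)$ the key identity is that $P_X v^\dagger=v^\dagger$ for every $v\in F(X,Y)$ and every $Y\atomof\Y$. To establish it I split $P_X v^\dagger=\sum_{Y'}\bigl(\sum_{u\in B_X^{Y'}}u^\dagger u\bigr)v^\dagger$ and treat the two cases separately. For $Y'\neq Y$, the orthogonality condition $F(X,Y')\cdot F(X,Y)^\dagger=0$ (established in the proof of theorem~\ref{Q3} from coinjectivity) gives $u\,v^\dagger=0$, killing that term; for $Y'=Y$, expanding $v$ in $B_X^Y$ and using the orthonormality relation $u\,a^\dagger=\delta_{u,a}\,1_Y$ for $u,a\in B_X^Y$ collapses the remaining sum to $v^\dagger$. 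Thus $P_X$ acts as the identity by left multiplication on each $v^\dagger$, hence on each $v^\dagger w$, hence on the whole span of condition (1). Assuming (1), I write $1_X$ as a finite linear combination of such $v^\dagger w$ and conclude $P_X=P_X\cdot 1_X=1_X$, which is (2).

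The main obstacle is the bookkeeping inside the key identity $P_X v^\dagger=v^\dagger$: one must take the adjoint of the fission coisometry correctly (per the convention of lemma~\ref{Q1}) and then deploy \emph{two different} orthogonality facts in the two cases—the coinjectivity-induced orthogonality $F(X,Y')\cdot F(X,Y)^\dagger=0$ across distinct atoms $Y$, and the orthonormality of $B_X^Y$ within a single atom $Y$. Once that identity is in hand, everything else is a short formal manipulation.
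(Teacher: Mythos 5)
Your proof is correct, and its overall skeleton matches the paper's: both establish $(2)\Leftrightarrow(3)$ by the same one-line computation of $\phi(1)(X)$, and both note that $(2)\Rightarrow(1)$ is immediate since each $v^\dagger v$ lies in $\sum_{Y} F(X,Y)^\dagger\cdot F(X,Y)$. Where you diverge is in the only substantive direction, $(1)\Rightarrow(2)$. The paper first argues that the spaces $F(X,Y)^\dagger\cdot F(X,Y)$ form pairwise-orthogonal $*$-subalgebras of $L(X)$, decomposes $1_X=\sum_Y p_Y$ with $p_Y$ the identity of each subalgebra, and then identifies $p_Y$ with $\sum_{v\in B_X^Y}v^\dagger v$ by expanding $p_Y=\sum_{i,j}\alpha_{ij}v_i^\dagger v_j$ and showing the coefficient matrix is the identity. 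You instead verify directly that $P_X=\sum_Y\sum_{v\in B_X^Y}v^\dagger v$ acts as a left identity on every generator $v^\dagger w$ of the span in condition (1) --- using the cross-atom orthogonality $F(X,Y')\cdot F(X,Y)^\dagger=0$ for $Y'\neq Y$ and the orthonormality $u\,a^\dagger=\delta_{u,a}1_Y$ within one atom --- and then conclude $P_X=P_X\cdot 1_X=1_X$ from the membership $1_X\in\mathrm{span}\{v^\dagger w\}$. Your route is somewhat more economical: it sidesteps the need to justify that $1_X$ splits as a sum of subalgebra identities and the coefficient-matrix computation, replacing both with a single left-multiplication identity; the paper's route, in exchange, makes the structural picture (orthogonal subalgebras with units $p_Y$ summing to $1_X$) explicit, which is reused implicitly elsewhere. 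Both arguments rest on exactly the same two orthogonality facts, so the difference is one of packaging rather than substance.
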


A partial fission satisfying condition (2) is thus reasonably called simply a \emph{fission}.

\begin{proof}
Condition (2) is easily seen to be equivalent to condition (3):
\begin{align*}
\phi(1) = 1
\;&\Leftrightarrow\;
\forall X \atomof \X\: \;\phi(1)(X) = 1_X
\\ &\Leftrightarrow\;
\forall X \atomof \X\: \; \sum_{Y \atomof \Y} f_X^{Y\,\dagger}(1 \tensor 1) f_X^Y =1
\\ &\Leftrightarrow\;
\forall X \atomof \X\: \; \sum_{Y \atomof \Y} f_X^{Y\,\dagger} f_X^Y =1
\end{align*}

To demonstrate the equivalence between condition (1) and condition (2), choose bases $(B_X^Y)$ for the spaces $(F(X,Y))$. Condition (1) is equivalent to $\sum_{Y \atomof \Y} F(X,Y)^\dagger \cdot F(X,Y) \ni 1_X$ for all $X \atomof \X$, and condition (2) is equivalent to the equation 
$\sum_{Y \atomof \Y} \sum_{v \in B_X^Y} v^\dagger v =1_X$ for all $X \atomof \X$. Thus, condition (2) clearly implies condition (1).

So, assume condition (1), and fix $X \atomof \X$. The definition of a function implies that $(F(X,Y)^\dagger \cdot F(X,Y) \suchthat Y \atomof \Y)$ is a family of pairwise-orthogonal $*$-subalgebras of $L(X)$. Thus, the identity operator $1_X$ can be written as a sum of orthogonal projections $\sum_{Y \atomof \Y} p_Y$, with $p_Y$ the identity of $F(X,Y)^\dagger \cdot F(X,Y)$. Fix $Y \atomof \Y$, and for convenience, enumerate the orthonormal basis $B_X^Y = \{v_1, v_2, \ldots, v_n\}$. Since $p_Y  \in F(X,Y)^\dagger \cdot F(X,Y)$, we can write $p_Y = \sum_{i,j=1}^n  \alpha_{ij} v_i^\dagger v_j$ for some doubly indexed family of complex numbers $(\alpha_{ij} \suchthat 1\leq i,j \leq n)$, and the orthonormality of the basis easily implies that $\alpha_{ij}$ is the Dirac delta symbol. Thus, $p_Y = \sum_{i=1}^n v_i^\dagger v_i = \sum_{v \in B_X^Y} v^\dagger v$. We vary $Y \atomof \Y$ to conclude that $1_X = \sum_{Y \atomof \Y} \sum_{v \in B_X^Y} v^\dagger v$, as desired.
\end{proof}

\section{Functoriality}\label{section 7}

We now define the composition of partial fissions, to prove that the bijective correspondences in Theorem \ref{Q3} are functorial.

\begin{definition}\label{Q4}
Let $\X$, $\Y$, and $\Z$ be quantum sets. Let $f$ be a partial fission from $\X$ to $\Y$, and let $g$ be a partial fission from $\Y$ to $\Z$. Explicitly, $f$ has components $f_X^Y \: X \To Y \tensor H_X^Y$, and $g$ has components $g_Y^Z\: Y \To Z \tensor K_Y^Z$. For all $X \atomof \X$ and $Z \atomof \Z$, the Hilbert space $L_X^Z = \bigoplus_{Y \atomof \Y} K_Y^Z \otimes H_X^Y$ is finite-dimensional (Definition \ref{Q2}). The \emph{composition} $g \circ f$ is the partial fission from $\X$ to $\Z$ whose components $(g \circ f)_X^Z \: X \To Z \tensor L_X^Z$ are defined by
\begin{equation*}
(g \circ f)_X^Z = \sum_{Y \atomof \Y} (g^Z_Y \tensor 1)\cdot f_X^Y.
\end{equation*}
Each individual term $(g^Z_Y \tensor 1)\cdot f_X^Y$ is a coisometry from $X$ to $Z \tensor K^Z_Y \tensor H_X^Y$, so the sum has finitely many nonzero terms and is an operator from $X$ to $\bigoplus_{Y \atomof \Y} Z \tensor K^Z_Y \tensor H_X^Y = Z \tensor L_X^Z$. 
For fixed $X \atomof \X$, it is easy to see that $[(g^Z_{Y} \tensor 1)\cdot f_X^{Y}] \cdot [ (g^{Z'}_{Y'} \tensor 1)\cdot f_X^{Y'}]^\dagger = 0$ unless $Y = Y'$ and $Z = Z'$. This implies that $(g \circ f)_X^Z$ is a coisometry for each $Z \atomof \Z$, and furthermore that $[(g \circ f)_X^Z]\cdot [(g \circ f)_X^Z]^\dagger = 0$ unless $Z = Z'$. We vary $X$ to conclude that $g \circ f$ is a partial fission.
\end{definition}

\begin{proposition}\label{Q5}
The bijective correspondences of Theorem \ref{Q3} are functorial. Explicitly: Let $\X$, $\Y$, and $\Z$ be quantum sets. Let $F$ be a partial function from $\X$ to $\Y$ with corresponding partial fission $f$ and homomorphism $\phi$. Similarly, let $G$ be a partial function from $\Y$ to $\Z$ with corresponding partial fission $g$ and homomorphism $\psi$. It follows that the partial function $G \circ F$ has corresponding partial fission $g \circ f$, and corresponding homomorphism $\phi \circ \psi$.
\end{proposition}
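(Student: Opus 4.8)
The plan is to reduce both claims to the explicit formulas of theorem \ref{Q3} and definition \ref{Q4}, and then to invoke the bijectivity in theorem \ref{Q3} so as to avoid doing the same work twice. Fix atoms $X \atomof \X$ and $Z \atomof \Z$, and choose orthonormal bases $B_X^Y$ of each $F(X,Y)$ and $B_Y^Z$ of each $G(Y,Z)$ for the inner products of theorem \ref{Q3}, so that $v v'^\dagger = \delta_{vv'} 1_Y$ for $v,v' \in B_X^Y$ and $w w'^\dagger = \delta_{ww'} 1_Z$ for $w, w' \in B_Y^Z$. The crux of the argument, which I expect to be the main obstacle, is the orthonormality lemma: the products $\{w v \suchthat Y \atomof \Y,\, v \in B_X^Y,\, w \in B_Y^Z\}$ form an orthonormal basis of $(G \circ F)(X,Z)$. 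They span it by the definition of composition, so the content is orthonormality. For $Y = Y'$ one has $(wv)(w'v')^\dagger = w (v v'^\dagger) w'^\dagger = \delta_{vv'} \delta_{ww'} 1_Z$; for $Y \neq Y'$ the inner factor $v v'^\dagger$ lies in $F(X,Y) \cdot F(X,Y')^\dagger$, which vanishes because $F$ is a partial function (theorem \ref{Q3}), so the product is zero. This is exactly where coinjectivity is indispensable: for a general binary relation these products would merely span, and the ensuing dimension count would fail.

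Next I would record two computations of homomorphisms, each producing $\phi \circ \psi \colon \ell(\Z) \To \ell(\X)$. On the fission side, let $\chi$ be the homomorphism attached to $g \circ f$ and expand $\chi(c)(X) = \sum_{Z} ((g \circ f)_X^Z)^\dagger (c(Z) \tensor 1)\, ((g \circ f)_X^Z)$ using $(g \circ f)_X^Z = \sum_Y (g_Y^Z \tensor 1) f_X^Y$. The operator $c(Z) \tensor 1$ respects the orthogonal decomposition $L_X^Z = \bigoplus_Y K_Y^Z \tensor H_X^Y$, so every cross term with distinct values of $Y$ vanishes; the surviving diagonal terms simplify through $(g_Y^Z \tensor 1)^\dagger (c(Z) \tensor 1)(g_Y^Z \tensor 1) = ((g_Y^Z)^\dagger (c(Z) \tensor 1) g_Y^Z) \tensor 1$, and summing over $Z$ gives $\psi(c)(Y) \tensor 1$, leaving $\sum_Y (f_X^Y)^\dagger (\psi(c)(Y) \tensor 1) f_X^Y = \phi(\psi(c))(X)$. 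On the function side, the homomorphism attached to a partial function is $b \mapsto \sum_v v^\dagger\, b\, v$ over an orthonormal basis of each component (lemma \ref{Q1}), and this is independent of the chosen basis; evaluating the homomorphism of $G \circ F$ on the basis $\{wv\}$ of the lemma yields $\sum_{Z} \sum_{Y} \sum_{v \in B_X^Y,\, w \in B_Y^Z} v^\dagger w^\dagger c(Z)\, w v$, which is visibly the same expression $\phi(\psi(c))(X)$.

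The second computation already settles the homomorphism claim: the homomorphism corresponding to $G \circ F$ is $\phi \circ \psi$. The fission claim then costs nothing more. By theorem \ref{Q3} the assignment of a homomorphism to a fission is a bijection, and the two fissions $g \circ f$ and the fission of $G \circ F$ are carried by it to the single homomorphism $\phi \circ \psi$, so they coincide. One may instead argue concretely: the orthonormality lemma shows that $w^\dagger \tensor v^\dagger \mapsto (wv)^\dagger$ sends the orthonormal basis of $\bigoplus_Y G(Y,Z)^\dagger \tensor F(X,Y)^\dagger$ to that of $(G \circ F)(X,Z)^\dagger$, hence extends to a unitary of coefficient spaces intertwining $(g \circ f)_X^Z$ with the fission of $G \circ F$, exhibiting the two fissions as equivalent directly. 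Either way both claims follow, and the only non-formal input is the orthonormality lemma of the first paragraph.
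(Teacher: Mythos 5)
Your proof is correct, and its computational core coincides with the paper's: the same orthogonality fact $F(X,Y)\cdot F(X,Y')^\dagger = 0$ for $Y \neq Y'$ (your ``orthonormality lemma'' is exactly the paper's observation that the composition maps $G(Y,Z) \tensor F(X,Y) \To (G \circ F)(X,Z)$ are isometries with mutually orthogonal ranges), and the same expansion showing that the homomorphism attached to $g \circ f$ is $\phi \circ \psi$, with the cross terms in $Y$ killed by the invariance of the summands of $L_X^Z$ under $c(Z) \tensor 1$. Where you diverge is in how the two claims are closed out: the paper verifies the fission identity concretely, assembling the composition maps into a unitary $\overline u_X^Z$ and matching $(g\circ f)_X^Z$ against the fission of $G \circ F$ basis vector by basis vector, and then gets the homomorphism claim for $G\circ F$ by passing through that fission; you instead compute the homomorphism of $G \circ F$ directly from the orthonormal basis $\{wv\}$ via the formula $b \mapsto \sum v^\dagger b v$, and recover the fission claim for free from the injectivity of the fission-to-homomorphism correspondence in theorem \ref{Q3}. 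Your route trades the explicit unitary identification (which you correctly note is still available, and is essentially the paper's $u_X^Z = \dagger \circ \overline u_X^Z \circ (\dagger \tensor \dagger)$) for one extra homomorphism computation plus an appeal to bijectivity; both are sound, and the appeal to bijectivity is the tidier way to finish once the orthonormality lemma is in hand.
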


\begin{proof}
By definition of the corresponding homomorphisms $\phi$ and $\psi$, given in the statement on Theorem \ref{Q3}, we have that for all $c \in \ell^\infty(\Z)$ and all $X \atomof \X$,
\begin{align*} \phi (\psi(c))(X) & = \sum_{Y \atomof \Y} \sum_{Z \atomof \Z} f_X^{Y \, \dagger} (g_Y^{Z \, \dagger}(c(Z) \tensor 1)g_Y^Z \tensor 1) f_X^Y \\ &=  \sum_{Y \atomof \Y} \sum_{Z \atomof \Z} f_X^{Y \, \dagger} (g_Y^{Z \, \dagger}\tensor 1) (c(Z) \tensor 1 \tensor 1)(g_Y^Z \tensor 1) f_X^Y.
\end{align*}
The second tensor factor in $c(Z) \tensor 1 \tensor 1$ is identity on $K^Z_Y$, and the third is identity on $H_X^Y$. On the other hand, the homomorphism $\theta$ corresponding to $g \circ f$ is defined by
\begin{align*}
\theta(c)(X)
&=
\sum_{Z \atomof \Z} (g \circ f)_X^{Z\, \dagger}(c(z) \tensor 1) (g \circ f)_x^Z
\\ &=
\sum_{Z \atomof \Z} \left[ \left( \sum_{Y \atomof \Y}    (g^Z_Y \tensor 1) f_X^Y \right)^\dagger\cdot (c(Z) \tensor 1) \cdot
\left(\sum_{Y' \atomof \Y} (g^Z_{Y'} \tensor 1) f_X^{Y'}\right)\right]
\\ & =
\sum_{Z \atomof \Z} \sum_{Y \atomof \Y} \sum_{Y' \atomof \Y} f_X^{Y \, \dagger} (g_Y^{Z \, \dagger}\tensor 1) (c(Z) \tensor 1 )(g_{Y'}^Z \tensor 1) f_X^{Y'}.
\end{align*}
The second tensor factor in $c(Z) \tensor 1$ is identity on $L_X^Z = \bigoplus_{Y \atomof \Y} K_Y^Z \otimes H_X^Y$. Terms with $Y \neq Y'$ do not contribute because the ranges of $(g_{Y}^Z \tensor 1) f_X^{Y}$ and $(g_{Y'}^Z \tensor 1) f_X^{Y'}$ are $Z \tensor K_Y^Z \tensor H_X^Y$ and $Z \tensor K_{Y'}^Z \tensor H_X^{Y'}$ respectively, and each is invariant for $c(Z) \tensor 1$. On each summand $K_Y^Z \tensor H_X^Y$, the identity on $L_X^Z$ can also be written as the tensor product of the identity on $K_Y^Z$ and the the identity on $H_X^Y$, which appears in our expression for $\phi(\psi(c))$. Thus, we find that the two expressions are equal; in other words, $\phi \circ \psi = \theta$. We conclude that the correspondence between partial fissions and normal $*$-homomorphisms is functorial, contravariantly.

We now argue that the first correspondence in Theorem \ref{Q3} is also functorial. Fix $X \atomof \X$ and $Z \atomof \Z$. For each $Y \atomof \Y$, the composition map $G(Y,Z) \tensor F(X,Y) \To (G \circ F)(X,Z)$ is an isometry, as a direct consequence of the definition of the inner products on these three spaces. Furthermore, for distinct $Y, Y' \atomof \Y$, the ranges of the corresponding composition maps are orthogonal in $(G \circ F)(X,Z)$, because $F(X,Y) \cdot F(X,Y')^\dagger = 0$, as an immediate consequence of the definition of partial function. Together, the ranges of the composition maps $G(Y,Z) \tensor F(X,Y) \To (G \circ F)(X,Z)$, for $Y \atomof \Y$, span $(G \circ F)(X,Z)$, so they form an orthogonal decomposition of this Hilbert space. In other words, these composition maps together form a unitary operator $\overline u_X^Z$ from $\bigoplus_{Y \atomof \Y} G(Y,Z) \tensor F(X,Y)$ to $(G \circ F)(X,Z)$.

For each $Y \atomof \Y$, choose an orthonormal basis $B_X^Y$ for $F(X,Y)$, and an orthonormal basis $C_Y^Z$ for $G(Y,Z)$. For all $x \in X$, we have
\begin{align*}
(g \circ f)_X^Z(x)
& =
\sum_{Y \atomof \Y} (g_Y^Z \tensor 1) (f_X^Y(x))
\\ & =
\sum_{Y \atomof \Y} \sum_{v \in B_X^Y}
(g^Z_Y \tensor 1) ( v(x) \tensor v^\dagger)
\\ & = 
\sum_{Y \atomof \Y} \sum_{v \in B_X^Y} \sum_{w \in C_Y^Z} w(v(x)) \tensor w^\dagger \tensor v^\dagger
\end{align*}

The set $\{ w \tensor v\suchthat Y \atomof \Y,\, w \in C_Y^Z,\, v \in B_X^Y\}$ is an orthonormal basis of $\bigoplus_{Y \atomof \Y} G(Y,Z) \tensor F(X,Y)$, so the set $\{\overline u_X^Z( w \tensor v) \suchthat Y \atomof \Y,\,w \in C_Y^Z,\, v \in B_X^Y)\}$ is an orthonormal basis of $(G \circ F)(X, Z)$. By definition of $\overline u_X^Z$, we have that $\overline u_X^Z(w \tensor v) = w v$, so the partial fission $h$ corresponding to $G \circ F$ satisfies
\begin{equation*}h_X^Z(x) = \sum_{Y \atomof \Y} \sum_{v \in B_X^Y} \sum_{w \in C_Y^Z} w(v(x)) \tensor (wv)^\dagger.
\end{equation*}

Allowing $X$ and $Z$ to vary, we identify the partial fission $h$ and the partial fission $g \circ f$ according to Definition \ref{Q2}. Explicitly, we can use the family of unitaries defined by $u_X^Z = \dagger \circ \overline u _X^Z \circ (\dagger \tensor \dagger)$.
\end{proof}

\begin{definition}\label{Q11}
Let $\X$ and $\Y$ be quantum sets. Let $F\: \X \To \Y$ be a partial function. Write $F^\star\: \ell(\Y) \To \ell(\X)$ for the continuous $*$-homomorphism $\phi$ that corresponds to $F$ in the sense of Theorem \ref{Q3}.
\end{definition}

We will also write $F^\star$ for the restriction of $F^\star\: \ell(\Y) \To \ell(\X)$ to a normal $*$-homomorphism $\ell^\infty(\Y) \To \ell^\infty(\X)$, which exists by Lemma \ref{Q3.1}.

\begin{theorem}\label{Q10}
There is a contravariant monoidal equivalence from $\qPar$ to $\Mstar0$, which restricts to a contravariant monoidal equivalence from $\qSet$ to $\Mstar1$. It takes each quantum set $\X$ to the von Neumann algebra $\ell^\infty(\X)$, and it takes each function $F\: \X \To \Y$ to the normal $*$-homomorphism $F^\star\: \ell^\infty(\Y) \To \ell^\infty(\X)$.
This a monoidal functor in the strong sense, for the Cartesian product of quantum sets and the spatial tensor product of von Neumann algebras. This is an equivalence of categories in the weak sense that it is a full and faithful functor with the property that every hereditarily atomic von Neumann algebra is isomorphic to something in its image.
\end{theorem}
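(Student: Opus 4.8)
The plan is to take the assignment $\X \mapsto \ell^\infty(\X)$ on objects and $F \mapsto F^\star$ on morphisms (definition \ref{Q11}) as the candidate functor and to verify the required properties one at a time, drawing almost entirely on correspondence results already in hand. First I would check that this is a contravariant functor $\qPar \To \Mstar0$. That $F^\star$ is a well-defined normal $*$-homomorphism $\ell^\infty(\Y) \To \ell^\infty(\X)$ is the content of lemma \ref{Q3.1}, which shows that the continuous $*$-homomorphism of theorem \ref{Q3} restricts bijectively to a normal one on the $\ell^\infty$-subalgebras. The identity function $I_\X$ corresponds to the identity homomorphism, as is immediate from the explicit construction in theorem \ref{Q3}, and proposition \ref{Q5} gives $(G \circ F)^\star = F^\star \circ G^\star$; together these make $\ell^\infty(-)$ a contravariant functor.

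Next I would dispatch the equivalence properties. Full faithfulness is nothing more than the bijectivity built into the correspondence theorems: theorem \ref{Q3} gives a bijection between partial functions $\X \To \Y$ and continuous $*$-homomorphisms $\ell(\Y) \To \ell(\X)$, and lemma \ref{Q3.1} bijects the latter with normal $*$-homomorphisms $\ell^\infty(\Y) \To \ell^\infty(\X)$; composing, $F \mapsto F^\star$ is a bijection on each hom-set. Essential surjectivity is precisely the implication $(1) \Yields (2)$ of proposition \ref{Q}, namely that every hereditarily atomic von Neumann algebra is isomorphic to $\ell^\infty(\X)$ for a suitable quantum set $\X$. This already yields the weak equivalence $\qPar \simeq \Mstar0$. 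To restrict it to $\qSet \To \Mstar1$, I would invoke lemma \ref{Q8}: every partial function is coinjective, so $F$ is a function (definition \ref{J}) if and only if it is additionally cosurjective, $F^\dagger \circ F \geq I_\X$, and by lemma \ref{Q8} this holds exactly when $F^\star(1) = 1$, i.e. when $F^\star$ is unital. Hence the hom-set bijection restricts to a bijection between functions $\X \To \Y$ and unital normal $*$-homomorphisms, and the essential-surjectivity argument is unchanged, giving $\qSet \simeq \Mstar1$.

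The genuinely substantial part is the strong monoidal structure, and this is where I expect the main obstacle. I would first exhibit a natural isomorphism $\ell^\infty(\X_1) \,\bar\tensor\, \ell^\infty(\X_2) \iso \ell^\infty(\X_1 \times \X_2)$ intertwining the spatial tensor product $\bar\tensor$ with the Cartesian product of quantum sets, together with the identity $\ell^\infty(\mathbf 1) = \CC$ for the unit $\mathbf 1 = \Q\{\CC\}$. The underlying algebraic fact is that the spatial tensor product distributes over $\ell^\infty$-direct sums: writing $\ell^\infty(\X_i) = \bigoplus^{\ell^\infty}_{X_i \atomof \X_i} L(X_i)$, the central projections cutting out each $(X_1, X_2)$-block identify $(\bigoplus^{\ell^\infty}_{X_1} L(X_1)) \,\bar\tensor\, (\bigoplus^{\ell^\infty}_{X_2} L(X_2))$ with $\bigoplus^{\ell^\infty}_{X_1, X_2} L(X_1) \,\bar\tensor\, L(X_2) = \bigoplus^{\ell^\infty}_{X_1, X_2} L(X_1 \tensor X_2) = \ell^\infty(\X_1 \times \X_2)$. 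The naturality of this isomorphism, and the fact that it carries $F_1^\star \,\bar\tensor\, F_2^\star$ to $(F_1 \times F_2)^\star$, is supplied by lemma \ref{Q7}.

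The hard part will be bridging the gap between the componentwise homomorphism $\phi_1 \tensor \phi_2$ produced by lemma \ref{Q7}, which is specified only through its action on components and on the elementary tensors $b_1 \tensor b_2$, and the honest spatial tensor product $\phi_1 \,\bar\tensor\, \phi_2$ of normal homomorphisms demanded by the strong monoidal axioms; I would resolve this by observing that the two agree on the ultraweakly dense span of the elementary tensors and then coincide everywhere by normality. The distributivity fact above is standard for von Neumann algebras but must be stated carefully because the direct sums are infinite, which is exactly the point at which operator topologies could misbehave. Once the structure isomorphism is in place, I expect the associativity and unit coherence diagrams to reduce to atomwise verifications inherited from the corresponding coherence in $\FdHilb$, which are tedious but routine.
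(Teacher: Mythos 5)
Your proposal is correct and follows essentially the same route as the paper: functoriality from theorem \ref{Q3} and proposition \ref{Q5}, full faithfulness and essential surjectivity from the bijections of theorem \ref{Q3}, lemma \ref{Q3.1}, and proposition \ref{Q}, the restriction to $\qSet \To \Mstar1$ via lemma \ref{Q8}, and the monoidal structure via the natural isomorphism $\ell^\infty(\X) \tensor \ell^\infty(\Y) \iso \ell^\infty(\X \times \Y)$ whose naturality is lemma \ref{Q7}. The extra care you flag about reconciling the componentwise tensor homomorphism with the spatial tensor product is a point the paper handles in the discussion preceding the proof (citing Guichardet for the coincidence of spatial and categorical tensor products of hereditarily atomic von Neumann algebras), and your density-plus-normality argument disposes of it correctly.
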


The monoidal structure on $\qPar$ is just the restriction of the monoidal structure on $\qRel$, defined in section \ref{section 3}: The monoidal product of two quantum sets $\X$ and $\Y$ is their generalized Cartesian product $\X \times \Y = \Q\{X \tensor Y \suchthat X \atomof \X,\, Y \atomof \Y\}$, and the monoidal unit is the quantum set $\mathbf 1 =\Q \{\CC\}$. The monoidal product of partial functions $F_1\: \X_1 \to \Y_1$ and $F_2\: \X_2 \To \Y_2$ is defined by $(F_1 \times F_2)(X_1 \tensor X_2, Y_1 \tensor Y_2) = F_1(X_1, Y_1) \tensor F_2(X_2, Y_2)$, for $X_1 \atomof \X_1$, $X_2 \atomof \X_2$, $Y_1 \atomof \Y_1$, and $Y_2 \atomof \Y_2$.

The monoidal product of two von Neumann algebras $A \subsetof L(H)$ and $B \subsetof L(K)$ is their spatial tensor product $A \tensor B = \{a \tensor b \in L(H \tensor K) \suchthat a \in A,\, b \in B \}''$, and the monoidal unit is the one-dimensional von Neumann algebra $L(\CC)$. The monoidal product of normal $*$-homomorphisms $\phi_1\: A _1 \To B_1$ and $\phi_2\: A _2 \To B_2$ is defined by $(\phi_1 \tensor \phi_2)(a_1 \tensor a_2) = \phi_1(a_1) \tensor \phi_2(a_2)$, for all $a_1 \in A_1$ and $a_2 \in A_2$. For general von Neumann algebras, the existence of such a normal $*$-homomorphism $\phi_1 \tensor \phi_2$ follows from the special case of injective $\phi_1$ and $\phi_2$, and the special case of surjective $\phi_1$ and $\phi_2$.

The spatial tensor product of two hereditarily atomic von Neumann algebras is equal to their categorical tensor product, because each is an $\ell^\infty$-direct sum of type I factors \cite{Guichardet66}. In other words, if $A_1$ and $A_2$ are hereditarily atomic von Neumann algebras, and $B$ is any von Neumann algebra, then a pair of normal $*$-homomorphisms $\phi_1\: A_1 \To B$ and $\phi_2\: A_2 \To B$ will factor through $A_1 \tensor A_2$ if and only if every operator in the image of $\phi_1$ commutes with every operator in the image of $\phi_2$.

\begin{proof}[Proof of Theorem \ref{Q10}]
Theorem \ref{Q3} and Proposition \ref{Q5} give us a contravariant equivalence from the category of quantum sets and partial functions, to a category of topological $*$-algebras and continuous $*$-homomorphisms. The contravariant equivalence maps each function $F\: \X \To \Y$ to the continuous $*$-homomorphism $\phi\: \ell(\Y) \to \ell(\X)$, which restricts to a normal $*$-homomorphism $\ell^\infty(\Y) \To \ell^\infty(\X)$ by Lemma \ref{Q3.1}. This defines a contravariant equivalence $\qPar \To \Mstar0$ by Proposition \ref{Q} and Lemma \ref{Q3.1}, which restricts to a contravariant equivalence $\qSet \To \Mstar1$ by Lemma \ref{Q8}. Both equivalences are monoidal because the von Neumann algebra isomorphism $\ell^\infty(\X) \tensor \ell^\infty(\Y) \To \ell^\infty(\X \times \Y)$ defined by $a \tensor b \mapsto (X \tensor Y \mapsto a(X) \tensor b(Y))$ is natural by Lemma \ref{Q7}.
\end{proof}

\begin{proposition}\label{Q12}
Let $\X$ and $\Y$ be quantum sets. Let $F\: \X \To \Y$ be a partial function. For each $X \atomof \X$ and each $Y \atomof \Y$, the vector space $L(X, Y)$ is canonically a Hilbert space with the inner product $\< w |  w'\> = \Tr(w^\dagger w')$. If  $(B_X^Y\suchthat X \atomof \X,\, Y \atomof \Y)$ is any family of orthonormal bases for the Hilbert spaces $(F(X,Y)\suchthat X \atomof \X,\, Y \atomof \Y)$ for these inner products, then for all $b \in \ell(\Y)$ and all $X \atomof \X$, we have \begin{equation*} F^\star(b)(X) =  \sum_{Y \atomof \Y}  \sum_{w \in B_X^Y} \mathrm{dim}(Y)\cdot w^\dagger b(Y)  w.
\end{equation*}
For fixed $b$ and $X$, only finitely many of the terms in the above sum are nonzero.
\end{proposition}

\begin{proof}
Fix $b \in \ell(\Y)$ and $X \atomof \X$. Choose orthonormal bases $(B_X^Y \suchthat Y \atomof \Y)$ for the Hilbert spaces $(F(X,Y)\suchthat Y \atomof Y)$ equipped with the inner products defined by $\< w |  w'\> = \Tr(w^\dagger w')$. Referring to the statement of Theorem \ref{Q3}, we find that for all $w, w' \in F(X,Y)$,
\begin{equation*}\< w |  w'\> = \Tr(w^\dagger w') = \Tr(w' w^\dagger) = \Tr((w| w') \cdot 1_Y) = (w| w') \cdot \dim(Y).\end{equation*}
It follows that for each $Y \atomof \Y$, the set $\hat B_X^Y = \{w \cdot \sqrt{\mathrm{dim}(Y)} \suchthat w \in B_X^Y\}$ is an orthonormal basis for the inner product $(\, \cdot\, |\, \cdot\,)$. We now calculate that for all $x, x' \in X$,
\begin{align*}\< x \suchthat \phi(b)(X) x' \>
& = 
\sum_{Y \atomof \Y} \< f_X^Y x | (b(Y) \tensor 1) f_X^Y x' \>
 \\ &=
\sum_{Y \atomof \Y} \sum_{v \in \hat B_X^Y} \sum_{v' \in \hat B_X^Y} \< v(x) \tensor v^\dagger | b(Y) v'(x') \tensor {v'}^\dagger \>
\\ &=
\sum_{Y \atomof \Y} \sum_{v \in \hat B_X^Y} \sum_{v' \in \hat B_X^Y} \< v (x)| b(Y) v' (x') \> \cdot  ( v^\dagger | {v'}^\dagger)
\\ &= \sum_{Y \atomof \Y} \sum_{v \in \hat B_X^Y} \< v (x)| b(Y) v (x') \>
=
\sum_{Y \atomof \Y} \sum_{v \in \hat B_X^Y} \< x| v^\dagger b(Y) v x' \>.
\end{align*}
Therefore,
\begin{equation*}\phi(b)(X) 
=
\sum_{Y \atomof \Y} \sum_{v \in \hat B_X^Y} v^\dagger b(Y) v 
=
\sum_{Y \atomof \Y} \sum_{w \in B_X^Y} \dim(Y)\cdot  w^\dagger b(Y) w. %\qedhere
\end{equation*}
\end{proof}

\section{Completeness and cocompleteness}\label{section 8}

In this section, we establish the basic properties of the category $\qSet$ of quantum sets and functions, by leveraging its contravariant duality with the category $\Mstar1$ of hereditarily atomic von Neumann algebras and unital normal $*$-homomorphisms (Theorem \ref{Q10}). The arguments follow those of \cite{Kornell17}; their applicability relies on the trivial fact that any von Neumann subalgebra of a hereditarily atomic von Neumann algebra is itself hereditarily atomic. As a consequence, any quotient of a hereditarily atomic von Neumann algebra by an ultraweakly closed $*$-ideal is also hereditarily atomic.

\begin{proposition}\label{Y}
Let $F$ be a function from a quantum set $\X$ to a quantum set $\Y$. The following are equivalent:
\begin{enumerate}
\item $F$ is surjective
\item $F$ is epic in $\qSet$
\item $F^\star$ is monic in $\Mstar1$
\item $F^\star$ is injective as a map $\ell^\infty(\Y) \To \ell^\infty(\X)$
\end{enumerate}
\end{proposition}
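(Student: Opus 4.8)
The plan is to route everything through the contravariant duality of Theorem \ref{Q10} together with the explicit atomwise description of $F^\star$ coming from Lemma \ref{Q1}, Theorem \ref{Q3}, and Lemma \ref{Q3.1}. I would prove the three equivalences $(2)\Leftrightarrow(3)$, $(1)\Leftrightarrow(4)$, and $(3)\Leftrightarrow(4)$ separately. The equivalence $(2)\Leftrightarrow(3)$ is purely formal: the assignment $\X \mapsto \ell^\infty(\X)$, $F \mapsto F^\star$ is a full and faithful contravariant functor whose essential image is all of $\Mstar1$, so it carries epimorphisms to monomorphisms, and by fullness and essential surjectivity every parallel pair of morphisms out of $\ell^\infty(\Y)$ has the form $(G^\star, H^\star)$ for parallel $G,H$ out of $\Y$. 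Hence $F$ is epic precisely when $F^\star$ is monic.

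For $(1)\Leftrightarrow(4)$ I would compute atomwise. Since $F$ is a function it is coinjective, so $F \circ F^\dagger \leq I_\Y$, and therefore $F$ is surjective iff $F \circ F^\dagger = I_\Y$, i.e. iff $(F \circ F^\dagger)(Y,Y) = \CC \cdot 1_Y$ for every $Y \atomof \Y$. Coinjectivity gives $w v^\dagger \in \CC \cdot 1_Y$ for $v, w \in F(X,Y)$, so $(F \circ F^\dagger)(Y,Y) \subseteq \CC \cdot 1_Y$, with equality iff $F(X,Y) \neq 0$ for some $X \atomof \X$ (take $v = w \neq 0$). Thus $F$ is surjective iff for every $Y \atomof \Y$ there is an $X \atomof \X$ with $F(X,Y) \neq 0$. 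On the algebra side, writing $\phi = F^\star$ and $\phi_X^Y = \pi_X \circ \phi \circ \iota^Y$ as in Lemma \ref{Q3.1}, the projections $\phi_X^Y(1_Y)$ are pairwise orthogonal in $L(X)$, so $\phi(b)(X) = \sum_Y \phi_X^Y(b(Y))$ vanishes iff each term $\phi_X^Y(b(Y))$ vanishes. Because $L(Y)$ is simple, a nonzero $\phi_X^Y$ is injective; together with the bijection $F(X,Y) \leftrightarrow \phi_X^Y$ of Lemma \ref{Q1} (so $\phi_X^Y = 0$ iff $F(X,Y) = 0$), this shows $\ker\phi \neq 0$ iff there is a $Y_0$ with $F(X,Y_0) = 0$ for all $X$. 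Hence $\phi$ is injective iff every $Y$ admits some $X$ with $F(X,Y) \neq 0$, which is exactly the surjectivity condition for $F$.

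Finally I would treat $(3)\Leftrightarrow(4)$. The implication $(4)\Rightarrow(3)$ is immediate, since an injective map is monic. For $(3)\Rightarrow(4)$ I argue the contrapositive: if $\phi = F^\star$ is not injective, then $\ker\phi$ is a nonzero ultraweakly closed $*$-ideal of $\ell^\infty(\Y)$, hence equals $e \cdot \ell^\infty(\Y)$ for a nonzero central projection $e$ with $\phi(e) = 0$. Identify unital normal $*$-homomorphisms $\ell^\infty(`\{1,2\}) = \CC^2 \To \ell^\infty(\Y)$ with projections in $\ell^\infty(\Y)$ via $p \mapsto ((s,t) \mapsto s p + t(1-p))$. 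The projections $0$ and $e$ are distinct yet satisfy $\phi(0) = 0 = \phi(e)$, so the associated maps $\psi_0$ and $\psi_e$ are distinct and satisfy $\phi \circ \psi_0 = \phi \circ \psi_e$ (both equal $(s,t) \mapsto t \cdot 1$, using $\phi(1) = 1$). Since $\CC^2$ is hereditarily atomic, $\psi_0$ and $\psi_e$ are morphisms of $\Mstar1$, witnessing that $\phi$ is not monic. I expect this last step — manufacturing the equalized pair from a projection in the kernel — to be the only genuinely nonformal point; the remainder is bookkeeping with the duality and the atomwise formulas.
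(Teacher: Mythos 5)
Your proof is correct, and it uses the same essential ingredients as the paper's: the contravariant duality for $(2)\Leftrightarrow(3)$, a central projection in $\ker F^\star$ to manufacture an equalized pair for $(3)\Rightarrow(4)$, and the observation that non-surjectivity of $F$ is equivalent to some ``column'' $F(\,\cdot\,,Y_0)$ vanishing identically, which kills a minimal central projection under $F^\star$. The organization differs, though. The paper proves a cycle $(1)\Rightarrow(2)\Leftrightarrow(3)\Rightarrow(4)\Rightarrow(1)$, where $(1)\Rightarrow(2)$ is a short cancellation argument entirely inside $\qSet$ (surjectivity plus coinjectivity give $F\circ F^\dagger=I_\Y$, so postcomposing with $F^\dagger$ cancels $F$ on the right); you instead prove three biconditionals and never argue directly in $\qSet$, obtaining $(1)\Rightarrow(2)$ via $(1)\Rightarrow(4)\Rightarrow(3)\Rightarrow(2)$, with $(4)\Rightarrow(3)$ being the triviality that injective maps are monic. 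Your $(1)\Leftrightarrow(4)$ is a genuine two-sided atomwise computation (the paper only needs the direction $\neg(1)\Rightarrow\neg(4)$, which it reads off from proposition \ref{Q12}); your appeal to the simplicity of $L(Y)$ and the orthogonality of the corners $\phi_X^Y(1)$ is sound and self-contained. Finally, your equalized pair lives on $\CC^2$ rather than the paper's $\CC^3$; both work, and $\CC^2$ is marginally more economical. The paper's cycle buys a slightly shorter total argument and a direct, duality-free proof that surjections are epic; your version buys modularity, since each pair of conditions is linked by an argument that stands on its own.
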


\begin{proof}
$(1) \Yields (2)$: Assume that $F$ is surjective. Then, $F$ is both surjective and coinjective as a binary relation, so $F \circ F^\dagger = I_\Y$. For all functions $G_1$ and $G_2$ from $\Y$ to a quantum set $\Z$, if $G_1 \circ F = G_2 \circ F$, then $G_1 \circ F \circ F^\dagger = G_2 \circ F \circ F^\dagger$, so $G_1 = G_2$. Thus, $F$ is epic.

The equivalence $(2) \Equivalent (3)$ is immediate from the duality between $\qSet$ and $\Mstar1$.

$(3) \Yields (4)$: Assume $F^\star$ is not injective. Then, its kernel is equal to $(1- p)\cdot\ell^\infty(\Y)$ for some central projection $p \neq 1$. For $i \in \{1,2\}$, define $\phi_i\:\CC \oplus \CC \oplus \CC \To \ell^\infty(\Y)$ to be the unital normal $*$-homomorphism that takes the $i$-th minimal projection in $\CC \oplus \CC \oplus \CC$ to $1-p$ and takes the third minimal projection to $p$. Immediately, for $i \in \{1,2\}$, the homomorphism $F^\star \circ \phi_i\: \CC \oplus \CC \oplus \CC \To \ell^\infty(\X)$ takes the third minimal projection to the identity, and it is completely determined by this property. Thus, $F^\star$ is not monic.

$(4) \Yields (1)$: Assume $F$ is not surjective. This assumption is equivalent to the strict inequality $F \circ F^\dagger < I_\Y$, so there is some atom $Y_0 \atomof \Y$ such that $(F\circ F^\dagger)(Y_0,Y_0)$ = 0. The definition of composition then implies that $F(X,Y_0) = 0$ for all atoms $X \atomof \X$. Let $q \in \ell^\infty(\Y)$ be the projection defined by $q(Y_0) = 1_{Y_0}$, and $q(Y) = 0$ for $Y \neq Y_0$. The characterization of $F^\star$ given in Proposition \ref{Q12} immediately implies that $F^\star(q) = 0$, so $F^\star$ is not injective.
\end{proof}

\begin{definition}\label{Y1}
Let $\Y$ be a quantum set, and let $\X$ be a subset of $\Y$. We define the \emph{inclusion function} ${J_\X^\Y \: \X \hookrightarrow \Y}$ by $J_\X^\Y(X, Y) = \CC \cdot 1_X$ if $X = Y$, with $J_\X^\Y(X, Y)$ vanishing otherwise.
\end{definition}

\begin{lemma}\label{Y2}
Write $J = J_\X^\Y$. The unital normal $*$-homomorphism $J^\star\: \ell^\infty(\Y) \To \ell^\infty(\X)$ is defined by  $J^\star(b)(X) = b(X)$, for $X \atomof \X$. It is surjective, and its kernel is $(1-p)\cdot \ell^\infty(\Y)$, where $p$ is the central projection defined by $p(Y) = 1_Y$ for $Y \atomof \X$, with $p(Y) = 0$ otherwise.
\end{lemma}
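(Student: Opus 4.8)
The plan is to prove the three assertions in order: the explicit formula for $J^\star$, its surjectivity, and the identification of its kernel. Since $J = J_\X^\Y$ is a function (it is readily coinjective and cosurjective), $J^\star$ already exists as a unital normal $*$-homomorphism $\ell^\infty(\Y)\To\ell^\infty(\X)$ by theorems \ref{Q3} and \ref{Q10}; the formula is the crux of the lemma, and once it is established the remaining two claims follow immediately.

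To obtain the formula I would specialize proposition \ref{Q12} to $J$. Fix an atom $X \atomof \X$. By definition \ref{Y1}, the space $J(X,Y)$ is nonzero only when $Y = X$, where it equals the one-dimensional space $\CC \cdot 1_X$. With respect to the inner product $\< w | w' \> = \Tr(w^\dagger w')$ used in proposition \ref{Q12}, the operator $1_X$ has norm $\sqrt{\dim(X)}$, so $\{1_X/\sqrt{\dim(X)}\}$ is an orthonormal basis of $J(X,X)$. Substituting into the formula of proposition \ref{Q12}, the only surviving term gives
$$ J^\star(b)(X) = \dim(X) \cdot \frac{1_X}{\sqrt{\dim(X)}}\, b(X)\, \frac{1_X}{\sqrt{\dim(X)}} = b(X),$$
where $b(X)$ denotes the component of $b$ at the atom $X$, which is meaningful because $\At(\X) \subseteq \At(\Y)$. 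The one point requiring care here is the interplay between the factor $\dim(Y)$ in proposition \ref{Q12} and the normalization forced by the trace inner product; this is the main (and only mild) obstacle, and everything else is bookkeeping.

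Surjectivity and the kernel computation then follow directly from the formula. For surjectivity, given $a \in \ell^\infty(\X)$, I would define $b \in \ell(\Y)$ by $b(Y) = a(Y)$ for $Y \atomof \X$ and $b(Y) = 0$ otherwise; this $b$ lies in $\ell^\infty(\Y)$ since $\|b\| = \sup_{Y \atomof \X}\|a(Y)\| = \|a\|$, and $J^\star(b) = a$ by the formula. For the kernel, the formula shows at once that $\ker J^\star = \{b \in \ell^\infty(\Y) : b(X) = 0 \text{ for all } X \atomof \X\}$. I would then compare this with $(1-p)\cdot\ell^\infty(\Y)$: because $p$ is central, multiplication by $1-p$ acts componentwise, and $(1-p)(Y)$ equals $0$ for $Y \atomof \X$ and $1_Y$ otherwise, so $(1-p)\cdot\ell^\infty(\Y)$ consists of exactly those $c$ with $c(Y) = 0$ for every $Y \atomof \X$. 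The two descriptions coincide, yielding $\ker J^\star = (1-p)\cdot\ell^\infty(\Y)$.

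As a cross-check that sidesteps the normalization, one can instead verify the formula via the intrinsic characterization in theorem \ref{Q3}: by the bijectivity asserted there, it suffices to confirm that the homomorphism $\phi$ with $\phi(b)(X) = b(X)$ yields $J$ under the third construction $F(X,Y) = \{v : b(Y)\, v = v\, \phi(b)(X)\}$. For $Y \neq X$ one separates the components of $b$ to force $v = 0$, while for $Y = X$ the condition says that $v$ commutes with all of $L(X)$, giving $F(X,X) = \CC \cdot 1_X$; this recovers definition \ref{Y1} exactly.
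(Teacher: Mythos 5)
Your proposal is correct and follows essentially the same route as the paper: both specialize proposition \ref{Q12} to $J$, using that $J(X,Y)$ vanishes unless $Y=X$ and that $\{1_X/\sqrt{\dim(X)}\}$ is an orthonormal basis of $J(X,X)$ for the trace inner product, so the factor $\dim(X)$ cancels the normalization and yields $J^\star(b)(X)=b(X)$, from which surjectivity and the kernel description are immediate. Your extension-by-zero witness for surjectivity and the componentwise identification of $(1-p)\cdot\ell^\infty(\Y)$ just make explicit what the paper leaves as "it follows immediately."
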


\begin{proof}
For all $X \atomof \X$, the Hilbert space $F(X,X)$ is one-dimensional. The norm of the identity operator $1_X$ as a vector in this Hilbert space is $\sqrt{\mathrm{dim}(X)}$, so $\{1_X/\sqrt{\mathrm{dim}(X)}\}$ is an orthonormal basis for $F(X,X)$. For $X \neq Y$, the Hilbert space $F(X,Y)$ is zero-dimensional. Applying Proposition \ref{Q12}, we find that for all $b \in \ell^\infty(\Y)$ and all $X \atomof \X$, we have $J^\star(b)(X) = 1_X^\dagger b(X) 1_X = b(X)$ as claimed. It follows immediately that $J^\star$ is surjective. We also see that an operator $b$ is in the kernel of $J^\star$ if and only if $b(Y) = 0$ for all $Y \atomof \X$. In other words, $b$ is in the kernel of $J^\star$ if and only if $p \cdot b = 0$.
\end{proof}

\begin{proposition}\label{Z}
Let $F$ be a function from $\X$ to $\Y$. The following are equivalent:
\begin{enumerate}
\item $F$ is injective
\item $F$ is  monic in $\qSet$
\item $F^\star$ is epic in $\Mstar1$
\item $F^\star$ is surjective as a map $\ell^\infty(\Y) \To \ell^\infty(\X)$
\end{enumerate}
\end{proposition}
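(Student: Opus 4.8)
The plan is to prove the cycle $(1) \Rightarrow (2) \Leftrightarrow (3) \Rightarrow (4) \Rightarrow (1)$, mirroring the proof of the dual statement, proposition \ref{Y}. The implication $(1) \Rightarrow (2)$ is pure relation algebra: if $F$ is injective then $F^\dagger \circ F \leq I_\X$, and since $F$ is a function we already have $F^\dagger \circ F \geq I_\X$, so $F^\dagger \circ F = I_\X$. Then for any functions $G_1, G_2 \: \Z \To \X$ with $F \circ G_1 = F \circ G_2$, composing with $F^\dagger$ on the left gives $G_1 = F^\dagger \circ F \circ G_1 = F^\dagger \circ F \circ G_2 = G_2$, so $F$ is monic. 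The equivalence $(2) \Leftrightarrow (3)$ is immediate from the contravariant equivalence of theorem \ref{Q10}, since any equivalence of categories interchanges monomorphisms with epimorphisms under duality.

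For $(4) \Rightarrow (1)$ I would argue from the ideal structure of $\ell^\infty(\Y)$, as in lemma \ref{Y2}. If $F^\star$ is surjective, its kernel is an ultraweakly closed two-sided ideal of $\ell^\infty(\Y) = \prod_{Y \atomof \Y} L(Y)$; every such ideal has the form $(1-p)\cdot\ell^\infty(\Y)$ for a central projection $p$, and central projections correspond to subsets, so $p$ is the indicator of $\At(\Z)$ for some subset $\Z \subseteq \Y$. Thus $F^\star$ factors as the quotient $\ell^\infty(\Y) \To \ell^\infty(\Z)$, which is precisely $(J_\Z^\Y)^\star$ by lemma \ref{Y2}, followed by an isomorphism $\ell^\infty(\Z) \To \ell^\infty(\X)$. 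By the fullness and faithfulness in theorem \ref{Q10}, this isomorphism is $V^\star$ for a unique isomorphism $V\: \X \To \Z$, so $F^\star = V^\star \circ (J_\Z^\Y)^\star = (J_\Z^\Y \circ V)^\star$ and hence $F = J_\Z^\Y \circ V$. Since $V^\dagger \circ V = I_\X$ and $(J_\Z^\Y)^\dagger \circ J_\Z^\Y = I_\Z$, we get $F^\dagger \circ F = I_\X$, so $F$ is injective. (This also yields the clean geometric picture that the injective functions are exactly the isomorphisms onto subsets.)

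The crux is $(3) \Rightarrow (4)$: every epimorphism in $\Mstar1$ is surjective. This is the one place where genuinely new content is needed, because in the ambient category of all von Neumann algebras epimorphisms need not be surjective; the result holds here only because our algebras are $\ell^\infty$-direct sums of matrix algebras. I would argue the contrapositive. Suppose $F^\star$ is not surjective; since a normal $*$-homomorphism has ultraweakly closed image, $B_0 := F^\star(\ell^\infty(\Y))$ is a \emph{proper} von Neumann subalgebra of $B := \ell^\infty(\X)$. I would exhibit two distinct unital normal $*$-homomorphisms $\psi_1, \psi_2$ out of $B$ that agree on $B_0$; then $\psi_1 \circ F^\star = \psi_2 \circ F^\star$ while $\psi_1 \neq \psi_2$, witnessing that $F^\star$ is not epic. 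Writing $\pi_X \: B \To L(X)$ for the irreducible representation at an atom $X \atomof \X$, the construction splits into two cases. If some restriction $\pi_X|_{B_0}$ is reducible, then the relative commutant $B_0' \cap B$ contains a non-central unitary $u$, and one takes $\psi_1 = \mathrm{id}_B$, $\psi_2 = \mathrm{Ad}_u$. Otherwise every $\pi_X|_{B_0}$ is irreducible, so it factors through evaluation at a single atom of the quantum set presenting $B_0$; a counting argument shows that if the resulting labelling $X \mapsto [\pi_X|_{B_0}]$ were injective then $B_0 = B$, contradicting properness, so there are distinct atoms $X_1, X_2$ with $\pi_{X_1}|_{B_0} \cong \pi_{X_2}|_{B_0}$, and transporting $\pi_{X_2}$ onto $L(X_1)$ produces a second representation of $B$ agreeing with $\pi_{X_1}$ on $B_0$.

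The main obstacle is exactly this last step: verifying the dichotomy (reducible restriction versus repeated restriction) is exhaustive for a proper subalgebra, and checking that each branch really produces distinct normal \emph{unital} $*$-homomorphisms. This is the direct analogue, on the representation side, of the $\CC \oplus \CC \oplus \CC$ construction used for the reverse implication in proposition \ref{Y}, and the matrix-algebra structure is essential throughout. The remaining implication $(4) \Rightarrow (3)$ that would close any alternative routing is trivial, as surjective homomorphisms are always epic.
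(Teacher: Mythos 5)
Your proposal is correct and follows essentially the same route as the paper: the cycle $(1)\Rightarrow(2)\Leftrightarrow(3)\Rightarrow(4)\Rightarrow(1)$ with the same relation-algebra cancellation, the same duality step, and the same kernel/central-projection factorization through an inclusion $J_\Z^\Y$. The dichotomy you identify as the crux of $(3)\Rightarrow(4)$ (a reducible restriction yields a non-central unitary in the relative commutant, otherwise two atoms carry unitarily equivalent restrictions and can be swapped) is precisely the content the paper isolates as lemma \ref{1}.
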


\begin{proof} We follow the pattern of the previous proof.

$(1) \Yields (2)$: Assume that $F$ is injective. Then, $F$ is both injective and cosurjective as a binary relation, so $F^\dagger \circ F = I_\X$. For all functions $G_1$ and $G_2$ from a quantum set $\Z$ to $\X$, if $F \circ G_1 = F\circ G_2$, then $F^\dagger \circ F \circ G_1 = F^\dagger \circ F \circ G_2$, so $G_1 = G_2$. Thus, $F$ is monic.

The equivalence $(2) \Equivalent (3)$ is immediate from the duality between $\qSet$ and $\Mstar1$.

$(3) \Yields (4)$: Assume $F^\star$ is not surjective. Then, $F^\star(\ell^\infty(\Y))$ is a proper ultraweakly closed $*$-subalgebra of $\ell^\infty(\X)$ that contains the identity of $\ell^\infty(\X)$, and it is possible to find an automorphism of $\ell^\infty(\X)$ that fixes the operators in $F^\star(\ell^\infty(\Y))$ (Lemma \ref{1}). This demonstrates that $F^\star$ is not epic.

$(4) \Yields (1)$: Assume that $F^\star$ is surjective. The kernel of $F^\star$ is equal to $(1-p) \cdot \ell^\infty(\Y)$ for some central projection $p$. The central projections of $\ell^\infty(\Y)$ are clearly in bijective correspondence with the subsets of $\Y$, with $p$ corresponding to the subset $\Z = \Q\{Y \atomof \Y \suchthat  p(Y) = 1_Y\}$. Let $J = J_\Z^\Y\: \Z \hookrightarrow Y$ be the corresponding inclusion map (Definition \ref{Y1}). By Lemma \ref{Y2}, the functions $F^\star\: \ell^\infty(\Y) \To \ell^\infty(\X)$, and $J^\star\: \ell^\infty(\Y) \To \ell^\infty(\Z)$ have the same kernel, so they factor through each other via an isomorphism of Neumann algebras. Dually, the functions $F\: \X \To \Y$ and $J\: \Z \To \Y$ must also factor through each other via an isomorphism in $\qSet$. Thus, it is sufficient to show that $J\: \Z \To \Y$ is injective (Proposition \ref{K}). This is straightforward.
\end{proof}

\begin{lemma}\label{1}
Let $A$ be an atomic von Neumann algebra, and let $B$ be a proper von Neumann subalgebra of $A$ that contains the unit of $A$. There is a nontrivial automorphism $\phi$ of $A$ that fixes each operator in $B$.
\end{lemma}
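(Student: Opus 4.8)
The plan is to realize the desired automorphism as conjugation by a suitable unitary, splitting into two cases according to the relative commutant $B' \cap A$. First I would invoke the structure theory already used in the proof of proposition \ref{Q}: an atomic von Neumann algebra is an $\ell^\infty$-direct sum of type I factors, so I write $A = \bigoplus_{i \in I} L(H_i)$, acting on $H = \bigoplus_i H_i$, with $A' = Z(A) = \prod_i \CC \cdot 1_{H_i}$. Let $\pi_i \: A \To L(H_i)$ denote the block projections; these are exactly the irreducible normal representations of $A$. A routine block-matrix computation identifies $B' \cap A = \prod_i (\pi_i(B)' \cap L(H_i))$, and hence shows that $B' \cap A = Z(A)$ if and only if every restriction $\pi_i|_B$ is an irreducible representation of $B$.

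If $B' \cap A \neq Z(A)$, then, since a von Neumann algebra is the linear span of its unitaries, there is a unitary $u \in (B' \cap A) \setminus Z(A)$. Conjugation $\phi = \mathrm{Ad}(u)$ is then an inner automorphism of $A$; it fixes $B$ pointwise because $u$ commutes with $B$, and it is nontrivial because $u \notin A' = Z(A)$. Note that the single-factor case falls automatically into this branch, since there $Z(A) = \CC \cdot 1$ and properness of $B$ forces $B' \cap A \neq \CC \cdot 1$.

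The substantive case is $B' \cap A = Z(A)$, where every $\pi_i|_B$ is irreducible. Here the key point is that if the family $\{\pi_i|_B : i \in I\}$ consisted of pairwise inequivalent irreducibles, then Schur's lemma would compute the full commutant $B'$ in $L(H)$ to be $\prod_i \CC \cdot 1_{H_i} = A'$, whence $B = B'' = A'' = A$, contradicting that $B$ is proper. Thus there exist distinct $i, j$ and a unitary intertwiner $w \: H_i \To H_j$ with $w\, \pi_i(b) = \pi_j(b)\, w$ for all $b \in B$. I then let $u \in L(H)$ be the unitary acting as $w$ on $H_i$, as $w^\dagger$ on $H_j$, and as the identity on every other summand. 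This $u$ normalizes $A$ and exchanges the central projections $p_i$ and $p_j$, so $\mathrm{Ad}(u)$ restricts to a nontrivial automorphism of $A$; the intertwining relation together with its adjoint form $\pi_i(b)\, w^\dagger = w^\dagger\, \pi_j(b)$ shows that $u$ commutes with $B$, so this automorphism fixes $B$ pointwise.

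I expect the main obstacle to lie in this second case: proving that pairwise inequivalence of the restricted representations forces $B = A$ (the Schur/bicommutant argument, valid regardless of the dimensions of the $H_i$), and then assembling the swap unitary and verifying that it simultaneously normalizes $A$, commutes with $B$, and moves $p_i$ to $p_j$. By contrast, the first case is essentially immediate once the relative commutant has been computed.
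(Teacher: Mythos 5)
Your proposal is correct and follows essentially the same route as the paper: decompose the atomic algebra into type I factor summands, split into the case where some block restriction of $B$ is reducible (conjugate by a non-central unitary in the relative commutant) and the case where all are irreducible (two must be unitarily equivalent, else Schur plus the bicommutant theorem forces $B = A$; then swap the two summands). The only cosmetic difference is that the paper phrases the contradiction in the second case via the minimal central projections landing in $B$, whereas you go directly through $B = B'' = A'' = A$ — the same underlying argument.
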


\begin{proof}
Being an atomic von Neumann algebra, $A$ is the $\ell^\infty$-direct sum of type I factors: $A = \bigoplus_i A_i$. The inclusion homomorphism $\rho\: B \hookrightarrow A$ decomposes into unital normal $*$-homomorphisms $\rho_i\: B \To A_i$, which we may regard as representations of $B$.

If any such representation $\rho_{i_0}$ is reducible, then the bicommutant theorem yields a nontrivial unitary operator $u_{i_0} \in A_{i_0}$ that commutes with every operator in $\rho_i(B)$. The unitary operator $u \in A$ that is $u_{i_0}$ in the direct summand $A_{i_0}$ and the identity in every other direct summand is then in the commutant of $B$, but not in the center of $A$, so conjugation by $u$ is the desired automorphism.

If all the representations $\rho_i\: B \To A_i$ are irreducible, then some pair of them $\rho_{i_1}$ and $\rho_{i_2}$ must be unitarily equivalent, as otherwise the minimal central projections of $A$ would all be in $B$, implying that $B = A$. The automorphism of $A$ that exchanges the summands $A_{i_1}$ and $A_{i_2}$ according to this unitary equivalence is then a nontrivial automorphism of $A$ that fixes the elements of $B$.
\end{proof}

\begin{proposition}\label{2}
The category $\Mstar1$ of hereditarily atomic von Neumann algebras and unital normal $*$-homomorphisms is complete, so the category $\qSet$ of quantum sets and functions is cocomplete.
\end{proposition}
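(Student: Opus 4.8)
\emph{The plan.} The statement couples two claims by a duality, so the first move is to reduce to the algebraic side. Theorem \ref{Q10} provides a contravariant equivalence from $\qSet$ to $\Mstar1$, i.e.\ an equivalence $\qSet \simeq \Mstar1^{\mathrm{op}}$. Since an equivalence of categories transports colimits to colimits and the opposite of a complete category is cocomplete, the cocompleteness of $\qSet$ follows at once from the completeness of $\Mstar1$. This is precisely the ``so'' in the statement, so I would spend the rest of the proof establishing that $\Mstar1$ is complete. For that I would invoke the standard fact that a category is complete as soon as it has all small products and all equalizers of parallel pairs, and construct both.

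\emph{Products.} Let $(A_i \suchthat i \in I)$ be a family of hereditarily atomic von Neumann algebras. Their product in the category of all von Neumann algebras and unital normal $*$-homomorphisms is the $\ell^\infty$-direct sum $\bigoplus_i A_i$, with the coordinate projections forming the limiting cone; the mediating morphism out of a cone $(\theta_i \: C \To A_i)$ is $c \mapsto (\theta_i(c))_i$, which is bounded because unital $*$-homomorphisms are contractive and is normal because each coordinate is. Each $A_i$ is an $\ell^\infty$-direct sum of finite type I factors by proposition \ref{Q}, so the same is true of $\bigoplus_i A_i$; hence $\bigoplus_i A_i \iso \ell^\infty(\X)$ for a suitable quantum set $\X$, and is in particular hereditarily atomic. (Equivalently, a self-adjoint element of the product is a bounded family of diagonalizable self-adjoint operators whose diagonalizations assemble coordinatewise, so condition (3) of proposition \ref{Q} holds.)

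\emph{Equalizers.} Given parallel unital normal $*$-homomorphisms $\phi, \psi \: A \To B$ in $\Mstar1$, I would take $E = \{a \in A \suchthat \phi(a) = \psi(a)\}$. Since $\phi$ and $\psi$ are normal, $\phi - \psi$ is ultraweakly continuous, so $E$ is ultraweakly closed; it is visibly a unital $*$-subalgebra, hence a von Neumann subalgebra of $A$ containing the unit. Being a von Neumann subalgebra of a hereditarily atomic von Neumann algebra, $E$ is itself hereditarily atomic, as noted at the opening of this section. The inclusion $E \hookrightarrow A$ is the equalizer of $\phi$ and $\psi$, because any unital normal $*$-homomorphism $\theta \: C \To A$ with $\phi \circ \theta = \psi \circ \theta$ has image inside $E$ and therefore factors uniquely through the inclusion.

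\emph{The main (minor) obstacle.} The only point requiring care is the interplay between $\Mstar1$ and the ambient category of all von Neumann algebras. Both constructions above are limits in the ambient category; to conclude they are limits in $\Mstar1$, I would combine two observations: that the constructed objects lie in $\Mstar1$ (closure of the class of hereditarily atomic von Neumann algebras under $\ell^\infty$-direct sums, via proposition \ref{Q}, and under passage to von Neumann subalgebras, via the hereditary property), and that $\Mstar1$ is a \emph{full} subcategory, so the ambient universal properties restrict to universal properties against the objects of $\Mstar1$. With products and equalizers secured, $\Mstar1$ is complete, and the duality of theorem \ref{Q10} delivers the cocompleteness of $\qSet$.
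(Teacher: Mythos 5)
Your proof is correct and follows essentially the same route as the paper: the paper likewise reduces to showing that $\Mstar1$ has products ($\ell^\infty$-direct sums, which preserve hereditary atomicity) and equalizers (ultraweakly closed unital $*$-subalgebras, hence hereditarily atomic), citing \cite{Kornell17} for the ambient constructions that you spell out explicitly. The fullness observation you make at the end is the implicit justification in the paper as well.
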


\begin{proof}
See propositions 5.1 and 5.3 in \cite{Kornell17}, which show that the category $\mathbf{W}^*_1$ of von Neumann algebras and unital normal $*$-homomorphisms has all products and all equalizers. The $\ell^\infty$-direct sum of any family of hereditarily atomic von Neumann algebras is clearly itself hereditarily atomic, so the subcategory of hereditarily atomic von Neumann algebras is closed under products. The equalizer of two unital normal $*$-homomorphism $B \rightrightarrows A$ is an ultraweakly closed $*$-subalgebra of $B$, and is therefore hereditarily atomic when $B$ if hereditarily atomic, so the subcategory of hereditarily atomic von Neumann algebras is also closed under equalizers.
\end{proof}

\begin{proposition}\label{3}
The category $\Mstar1$ of hereditarily atomic von Neumann algebras and unital normal $*$-homomorphisms is cocomplete, so the category $\qSet$ of quantum sets and functions is complete.
\end{proposition}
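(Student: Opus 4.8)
The plan is to deduce cocompleteness of $\Mstar1$ from its completeness (proposition \ref{2}) by an abstract adjoint-functor argument, rather than by inheriting colimits from the ambient category $\mathbf{W}^*_1$ of all von Neumann algebras. The inheritance strategy worked for completeness precisely because $\Mstar1$ is closed under the products and equalizers of $\mathbf{W}^*_1$, but it has no coproduct analogue: the $\mathbf{W}^*_1$-coproduct of two hereditarily atomic algebras is a von Neumann free product, which is generally not atomic (already two copies of $\CC \oplus \CC$ amalgamate to a diffuse algebra). Thus $\Mstar1$ is \emph{not} closed under the coproducts of $\mathbf{W}^*_1$, and its colimits must be produced intrinsically. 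The cleanest route is the special adjoint functor theorem: for each small category $J$, the diagonal functor $\Delta\: \Mstar1 \To (\Mstar1)^J$ preserves all small limits, so it admits a left adjoint, namely the colimit functor $\mathrm{colim}_J$, provided that its domain $\Mstar1$ is complete, locally small, well-powered, and has a small cogenerating set.

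Completeness is proposition \ref{2}, and local smallness is clear. For well-poweredness, recall that by the contravariant equivalence of theorem \ref{Q10} every morphism of $\Mstar1$ is of the form $F^\star$ for a unique function $F$ in $\qSet$, and that by proposition \ref{Y} such a morphism is monic in $\Mstar1$ if and only if it is injective. Hence the subobjects of a hereditarily atomic von Neumann algebra $A$ are represented by the unital von Neumann subalgebras of $A$, which form a set. For a cogenerating set I would take the matrix algebras $\{L(\CC^n) \suchthat n \geq 1\}$, each a finite type I factor and so an object of $\Mstar1$. To see that this set cogenerates, let $\phi \neq \psi\: A \To B$ be parallel morphisms of $\Mstar1$. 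By proposition \ref{Q} we may assume $B = \ell^\infty(\Y)$, and choose $a \in A$ together with an atom $Y \atomof \Y$ such that $\phi(a)(Y) \neq \psi(a)(Y)$. The coordinate projection $\pi_Y\: \ell^\infty(\Y) \To L(Y)$ is a unital normal $*$-homomorphism into $L(Y) \iso L(\CC^{\dim Y})$, and $\pi_Y \circ \phi \neq \pi_Y \circ \psi$, as required.

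With all four hypotheses verified, the special adjoint functor theorem furnishes a left adjoint to $\Delta\: \Mstar1 \To (\Mstar1)^J$ for every small $J$, so $\Mstar1$ has all small colimits. Dually, since $\qSet \simeq (\Mstar1)^{\mathrm{op}}$ by theorem \ref{Q10}, the category $\qSet$ has all small limits, i.e.\ is complete. The main obstacle here is conceptual rather than computational: one must recognize that the concrete ``closed under the ambient construction'' argument of proposition \ref{2} genuinely fails for coproducts, and replace it with the adjoint-functor argument, whose only substantive content is exhibiting the cogenerating set of matrix algebras and confirming well-poweredness, both of which follow immediately from the structure theorem (proposition \ref{Q}) and the characterization of monics (proposition \ref{Y}).
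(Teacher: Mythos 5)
Your argument is correct, but it is genuinely different from the paper's. The paper also begins from the observation you make — that $\Mstar1$ is \emph{not} closed under the coproducts of the ambient category of von Neumann algebras (it quotes Davis's result that three copies of $\CC^2$ suffice to generate $L(H)$ for $H$ separable and infinite-dimensional) — but it then builds the colimits by hand: the coproduct of a family $(A_j)$ is constructed as the von Neumann algebra generated by the universal hereditarily atomic joint representation, namely the direct sum of all finite-dimensional joint representations of the $A_j$, and coequalizers are inherited from the ambient category because quotients of hereditarily atomic algebras are hereditarily atomic. You instead invoke the special adjoint functor theorem, and your verification of its hypotheses is sound: completeness is proposition \ref{2}; well-poweredness follows because monics in $\Mstar1$ are exactly the injective unital normal $*$-homomorphisms (proposition \ref{Y}, after transporting along isomorphisms $A \iso \ell^\infty(\X)$ from proposition \ref{Q}), so subobjects of $A$ are indexed by its unital von Neumann subalgebras; and the matrix algebras $L(\CC^n)$ cogenerate via the coordinate projections $\pi_Y\: \ell^\infty(\Y) \To L(Y)$. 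What the abstract route buys is brevity and independence from representation theory; what it gives up is any concrete description of the colimits. That loss matters downstream: the explicit direct-sum-of-finite-dimensional-representations construction of the free product is reused almost verbatim in the proof of theorem \ref{5} to build the internal hom $B^{\oast A}$, and the free product $\ast$ itself (e.g.\ in $\C = {`\RR} \ast {`\RR}$) is an object the paper needs to compute with, not merely to know exists. So your proof establishes the proposition, but a reader following your route would still need the paper's construction later.
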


\begin{proof}
See propositions 5.5 and 5.7 in \cite{Kornell17}. The subcategory of hereditarily atomic von Neumann algebras is \emph{not} closed under coproducts: Davis showed that the von Neumann algebra of bounded operators on a separable infinite-dimensional Hilbert space is generated by three projections \cite{Davis55}, so the coproduct of three copies of $\CC^2$ in the category of all von Neumann algebras is not hereditarily atomic. Thus, we will need to modify the proof of the cited proposition 5.5 in \cite{Kornell17}, rather than appeal to the proposition itself.

For any family $(A_{j})$ of von Neumann algebras, we define a joint representation of $(A_j)$ to be a family unital normal $*$-homomorphisms $(\rho_j\: A_j \To L(H))$, for some Hilbert space $H$. We construct the universal hereditarily atomic joint representation $ (\pi_j)$ of $(A_j)$ by taking a direct sum of all finite-dimensional joint representations. This representation is hereditarily atomic in the sense that the von Neumann algebra $A = (\Union_j \pi_j(A_j))''$ generated by the set of all represented operators is hereditarily atomic, being a subalgebra of an $\ell^\infty$-direct sum of finite-dimensional von Neumann algebras.

The universal hereditarily atomic joint representation of $(A_j)$ is indeed universal among its hereditarily atomic joint representations, in the sense that every hereditarily atomic joint representation $(\rho_j)$ of $(A_j)$ factors uniquely through $A$ via a unital normal $*$-homomorphism: Since, by assumption, the von Neumann algebra $(\Union_j \rho_j(A_j))''$ is hereditarily atomic, we may assume without loss of generality that this von Neumann algebra is an irreducibly represented finite type I factor. In this case, the joint representation $(\rho_j)$ is unitarily equivalent to one of the summands of the universal hereditarily atomic joint representation, giving us the desired homomorphism as conjugation by an isometry. This normal $*$-homomorphism is unique, because its values on the generators of $A$ are determined by the joint representation $(\rho_j)$.

If $(A_j)$ is an indexed family of hereditarily atomic von Neumann algebras, then the algebra $A$ is their coproduct. Indeed, any cocone on the family $(A_j)$ in $\Mstar1$ to some hereditarily atomic von Neumann algebra $B$ is a hereditarily atomic joint representation of $(A_j)$, and therefore factors uniquely through $A$, as we have shown. The inclusions of the coproduct are exactly the representations $\pi_j$.

The category of hereditarily atomic von Neumann algebras also has coequalizers, because it is closed under coequalizers as a subcategory of the category of von Neumann algebras and unital normal $*$-homomorphisms. Indeed, the coequalizer of two unital normal $*$-homomorphisms is a quotient of the codomain von Neumann algebra, which is hereditarily atomic if the codomain is itself hereditarily atomic. Thus, the category $\Mstar1$ is cocomplete.\end{proof}

In an effort to emphasize the duality between our two categories, we will use the symbols $\oplus$, $\circledast$, and $\otimes$ for the product, the coproduct, and the monoidal product on the category $\Mstar1$, and we will use the symbols $\uplus$, $\ast$, and $\times$ for the coproduct, the product, and the monoidal product on the category $\qSet$. We will refer to the former three operations as the direct sum, the free product, and the tensor product of hereditarily atomic von Neumann algebras; we will refer to the latter three operations as the disjoint union, the total product, and the Cartesian product of quantum sets.
Readers with a background in type theory may choose to perfect this notational correspondence by using $+$ for the disjoint union of quantum sets.

We comment briefly on the \emph{exact} definitions of these six operations. The direct sum $\bigoplus_j A_j$ of an indexed family $(A_j)$ of hereditarily atomic von Neumann algebras is obtained in the standard way, by first taking the $\ell^2$-direct sum of their Hilbert spaces. Similarly, the tensor product $A_0 \otimes A_1$ of hereditarily atomic von Neumann algebras $A_0$ and $A_1$ is obtained by first taking the tensor product of the Hilbert spaces; this is the ``spatial'' tensor product, though by a result of Guichardet \cite{Guichardet66}*{proposition 8.6}, it coincides with the ``categorical'' tensor product. The free product $\bigoast_j A_j$ is obtained as in the proof of Proposition \ref{3}, with the universal hereditarily atomic joint representation constructed as the direct sum of irreducible representations on Hilbert spaces of the form $\CC^n$, so such irreducible representations form a set. The disjoint union $\biguplus_j \X_j$ of an indexed family $(\X_j)$ of quantum sets is the union $\bigcup_j \X_j \times `\{ j\}$, and the Cartesian product $\X_0 \times \X_1$ is exactly as we defined it in section 2. The total product of an indexed family of quantum sets is only defined up to natural isomorphism.

\section{Quantum function sets}\label{section 9}

\begin{theorem}\label{5}
The category $\qSet$ of quantum sets and functions, equipped with the monoidal product $\times$, is a closed symmetric monoidal category.
\end{theorem}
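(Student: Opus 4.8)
The symmetric monoidal structure on $\qSet$ is already in hand: by theorem \ref{rel} the category $\qRel$ is dagger compact, and $\qSet$ is a symmetric monoidal subcategory under the Cartesian product $\times$ with unit $\mathbf 1 = \Q\{\CC\}$. The only new content is closedness, and in a symmetric monoidal category this amounts to showing that for each quantum set $\Y$ the endofunctor $-\times\Y$ admits a right adjoint. The plan is to transport this question across the contravariant monoidal equivalence of theorem \ref{Q10}. Writing $B = \ell^\infty(\Y)$, the functor $-\times\Y$ corresponds to $-\tensor B$ on $\Mstar1$, and, because the equivalence is contravariant, a right adjoint to $-\times\Y$ in $\qSet$ corresponds to a \emph{left} adjoint to $-\tensor B$ in $\Mstar1$. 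Note that $-\tensor B$ does restrict to an endofunctor of $\Mstar1$, since the spatial tensor product of two hereditarily atomic von Neumann algebras is again an $\ell^\infty$-direct sum of finite type I factors, hence hereditarily atomic. Thus it suffices to show that for each hereditarily atomic $B$ the functor $A \mapsto A \tensor B$ has a left adjoint $L_B\colon \Mstar1 \To \Mstar1$, equivalently that for each hereditarily atomic $C$ the functor $A \mapsto \Hom_{\Mstar1}(C, A \tensor B)$ is representable in $\Mstar1$.

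Existence of such a representing object in the category of \emph{all} von Neumann algebras is precisely the closedness of $(\mathbf{W}^*_1)^{op}$ established in \cite{Kornell17}; it furnishes a von Neumann algebra $E$ together with a universal unital normal $*$-homomorphism $u\colon C \To E \tensor B$ through which every $\theta\colon C \To A \tensor B$ factors uniquely as $\theta = (\phi \tensor \mathrm{id}_B)\circ u$ for a unique $\phi\colon E \To A$. The one thing this does not guarantee is that $E$ is hereditarily atomic, and supplying this is the crux of the proof. I would handle it exactly as proposition \ref{3} modifies the coproduct construction of \cite{Kornell17}.

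Concretely, I would first reduce the universal property to the case where the target $A$ is a finite type I factor. Every hereditarily atomic $A$ is an $\ell^\infty$-direct sum $\bigoplus_i A_i$ of such factors; since the spatial tensor product distributes over $\ell^\infty$-direct sums, $A \tensor B = \bigoplus_i (A_i \tensor B)$, and a unital normal $*$-homomorphism into an $\ell^\infty$-direct sum is precisely a family of such homomorphisms into the summands (this is the product in $\Mstar1$, proposition \ref{2}). Hence both $\Hom_{\Mstar1}(C, A \tensor B)$ and $\Hom_{\Mstar1}(E, A)$ split as products over $i$, and representability for all hereditarily atomic $A$ reduces to the statement that the finite-dimensional representations $\phi\colon E \To L(\CC^n)$ correspond naturally to the homomorphisms $\theta\colon C \To L(\CC^n)\tensor B$. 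I would then build $E$ directly from these data, following the pattern of proposition \ref{3}: for each $n$ the homomorphisms $C \To L(\CC^n)\tensor B$ form a set up to unitary conjugation on $\CC^n$, so one forms the direct sum of a set of representatives and takes $E$ to be the von Neumann algebra of matrix coefficients of the resulting universal finite-dimensional representation. Being a subalgebra of an $\ell^\infty$-direct sum of finite type I factors, this $E$ is hereditarily atomic, and its universal property among finite-dimensional targets is verified just as the universality of the joint representation in proposition \ref{3}, using that a hereditarily atomic image inside some $L(\CC^n)\tensor B$ is unitarily equivalent to one of the chosen summands. This preservation of hereditary atomicity is the genuinely nontrivial step; everything around it is either cited from \cite{Kornell17} or a transcription of the completeness and cocompleteness arguments.

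It remains to assemble the representing objects into a closed structure. Setting $\ell^\infty(\Z^\Y) = L_B(\ell^\infty(\Z))$ defines the internal hom $\Z^\Y$, and the universal homomorphisms $u$ dualize to evaluation functions $\Z^\Y \times \Y \To \Z$; naturality of the bijection $\Hom_{\Mstar1}(L_B(C), A) \iso \Hom_{\Mstar1}(C, A \tensor B)$ in $A$ (and, routinely, in $B$ and $C$) yields the adjunction $\mathrm{Fun}(\X \times \Y, \Z) \iso \mathrm{Fun}(\X, \Z^\Y)$ natural in $\X$. Since $\qSet$ is already symmetric monoidal and each $-\times\Y$ now has a right adjoint, the coherence of the closed structure is automatic, and $(\qSet, \times, \mathbf 1)$ is a closed symmetric monoidal category.
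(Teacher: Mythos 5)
Your proof is correct, and its core is the same as the paper's: dualize to $\Mstar1$ via theorem \ref{Q10}, build the representing object as (the relevant subalgebra of) a direct sum of finite-dimensional representations, and observe that a von Neumann subalgebra of an $\ell^\infty$-direct sum of matrix algebras is hereditarily atomic — you correctly identify this last point as the crux. The organization differs in one respect worth noting. The paper first proves the special case where the \emph{exponent} $\ell^\infty(\Y)$ is a single matrix algebra $L(\CC^d)$ (i.e., $\Y$ atomic), taking the direct sum of all representations of the base algebra on spaces $\CC^n \tensor \CC^d$ and cutting down to the smallest algebra $E$ with $\varepsilon(B) \subseteq E \otimes L(\CC^d)$; it then passes to general $\Y$ by a purely formal argument, decomposing $\Y$ as a coproduct of atomic quantum sets and using that $- \times -$ preserves coproducts, so that $\Z^{\biguplus_j \X_j} \iso \bigast_j \Z^{\X_j}$. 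You instead keep the exponent $B$ general throughout and perform the reduction on the \emph{test object} $A$, decomposing it into finite type I factors and exploiting that $\ell^\infty$-direct sums are products in $\Mstar1$; your universal object is then the "matrix-coefficient" algebra (slice maps by normal functionals on $B$) of the direct sum of all homomorphisms $C \To L(\CC^n)\tensor B$. Both reductions are legitimate; the paper's buys a reusable formal statement about exponentials of coproducts, while yours avoids the two-step detour and treats all exponents uniformly. One phrase to tighten: the existence half of your universal property rests on the fact that each $\theta\colon C \To L(\CC^n)\tensor B$ is literally (up to unitary conjugation on the $\CC^n$ leg) one of the chosen summands, so the corresponding coordinate projection restricted to $E$ does the job — not on any hereditary atomicity of the image of $\theta$, which is how your sentence reads.
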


\begin{proof}

See theorem 9.1 in \cite{Kornell17}. Our construction here follows the same basic pattern as the construction of the total product in the proof of proposition 8.5: we work in the category $\Mstar1$ of hereditarily atomic von Neumann algebras and unital normal $*$-homomorphisms, and we construct the desired universal object by taking a direct sum of finite-dimensional representations of the appropriate kind. Our task is to prove the following claim: for all hereditarily atomic von Neumann algebras $A$ and $B$, there is a hereditarily atomic von Neumann algebra $B^{\oast A}$ and unital normal $*$-homomorphism $\varepsilon\: B \To B^{\oast A} \otimes A$ that is universal among unital normal $*$-homomorphisms $\phi\: B \To C \otimes A$, for $C$ a hereditarily atomic von Neumann algebra, in the sense that there is a unique unital normal $*$-homomorphism $\pi\: B^{\oast A} \To C$ such that $(\pi \tensor \mathnormal 1_A) \circ \varepsilon = \phi$:
$$
\begin{tikzcd}
B \arrow{r}{\varepsilon} \arrow{rd}[swap]{\phi} &  B^{\oast A} \otimes A
\arrow[dotted]{d}{\pi  \otimes \mathnormal 1 }\\
& C \otimes A
\end{tikzcd}
$$
The shortest proof of this claim is essentially that of theorem 9.1 in \cite{Kornell17}. We take a slightly longer route, which I hope the reader will find more intuitive.

We first prove the above claim just in the special case that $A$ is a matrix algebra, i.e., $A = L(\CC^d)$. In this case, we construct $\varepsilon$ as the direct sum of all representations of $B$ on a Hilbert space of the form $\CC^n \tensor \CC^d$, for $n$ a nonnegative integer. Indexing the direct summands, we have a representation of $B$ on the Hilbert space $\bigoplus_i (\CC^{n_i} \otimes \CC^d) \iso  \left(\bigoplus_i \CC^{n_i} \right) \otimes \CC^d$. We now define the von Neumann algebra $B^{\oast L(\CC^d)}$ to be the smallest von Neumann algebra on the Hilbert space $\bigoplus_i \CC^{n_i} $ such that $\varepsilon( B) \subsetof B^{\oast L(\CC^d)} \otimes L(\CC^d)$. Certainly, $\varepsilon(B) \subsetof \bigoplus_i L(\CC^{n_i} \tensor \CC^d)  = \bigoplus_i (L(\CC^{n_i}) \otimes L(\CC^d)) \iso  \left(\bigoplus_i L(\CC^{n_i}) \right) \otimes L(\CC^d)$, so the von Neumann algebra $B^{\oast L(\CC^d)}$ is a subalgebra of the hereditarily atomic von Neumann algebra $\bigoplus_i  L(\CC^{n_i})$; it is therefore itself hereditarily atomic. It is not immediately apparent that $B^{\oast L(\CC^d)}$ is well-defined, in other words, that there is a smallest such von Neumann algebra. We may construct it by taking generators of the form $(\mathnormal 1 \tensor \Tr) ((1 \tensor a)\varepsilon (b))$, for $a \in L(\CC^d)$ and $b \in B$.

Let $\phi\: B \To C \tensor L(\CC^d) $ be any unital normal $*$-homomorphism, with $C$ a hereditarily atomic von Neumann algebra. Since $C$ is hereditarily atomic, we may assume without loss of generality that $C = \bigoplus_j L(\CC^{m_j})$, for some indexed family of positive integers $(m_j)$. We then have that $\phi$ is a map from $B$ to $\bigoplus_j  L(\CC^{m_j} \otimes \CC^d) $; in particular, we may view it as a direct sum of representations of $B$ on Hilbert spaces of the form $\CC^m \otimes \CC^d$. A unital normal $*$-homomorphism $\pi\: B^{\oast L(\CC^d)} \To C$ making the diagram commute can now be constructed by matching representations. It is unique because its values are determined on the generators of $B^{\oast L(\CC^d)}$:
\begin{align*} \pi((\mathnormal 1 \tensor \Tr ) &((1 \tensor a )\varepsilon (b)) ) 
=
(\pi \tensor \Tr) (( 1 \tensor a )\varepsilon (b))
\\ &= (\mathnormal 1 \tensor \Tr ) ((1 \tensor a) ( \pi \tensor \mathnormal 1) (\varepsilon (b))) =(1 \tensor \Tr ) ((1 \tensor a) \phi(b))
\end{align*}

We have proved the desired claim in the special case that $A$ is a matrix algebra. Translating this conclusion to the symmetric monoidal category $\qSet$ of quantum sets and functions equipped with the Cartesian product, we find that the functor $( \, -\, \times \X)$ has a right adjoint whenever $\X$ is atomic, i.e., whenever it has exactly one atom. In any symmetric monoidal category, this implies that arbitrary coproducts of such objects $\X$ enjoy the same property. Since, every quantum set is the coproduct of a family of atomic quantum sets, we have succeded in showing that $\qSet$ is a closed symmetric monoidal category.
\end{proof}

\begin{definition}\label{6}
Let $\X$ be a quantum set. We write $(\,-\,)^\X $ for the right adjoint of the functor $(\,-\,) \times \X$, where both functors are from the category $\qSet$ to itself. For any quantum set $\Y$, we call $\Y^\X$ the \emph{quantum function set} from $\X$ to $\Y$.
\end{definition}

Closed symmetric monoidal categories are common, and their basic properties are widely known. We review some of this basic theory in the context of our discussion of quantum sets. The expression $\Y^\X$ defines a functor, contravariant in $\X$, and covariant in $\Y$. Left adjoints preserve colimits and right adjoints preserve limits; thus, the functor $(\,-\,) \times \X$ preserves colimits, and the functor $(\,-\,)^\X$ preserves limits. In particular $(\Y_1 \ast \Y_2)^\X\iso \Y_1^\X \ast \Y_2 ^\X$; the same principle holds for total products of infinite families. The adjunction between the functors $(\,-\,) \times \X$ and $(\,-\,)^\X$ can be internalized: we have an isomorphism $(\Z^\Y)^\X \iso \Z^{\Y \times \X}$, natural in all variables. We also have a natural isomorphism $\Y^{\X_1 + \X_2} \iso \Y^{\X_1} \ast \Y^{\X_2}$; the same principle holds for disjoint unions of infinite families. In particular, if a quantum set $\X$ is decomposed as the union of its atomic subsets, $\X = \bigcup_{X \atomof \X} \Q\{X\}$, then
\begin{equation*} \Y^\X \iso \bigast_{X \atomof \X} \Y^{\Q\{X\}}.
\end{equation*}
The computation that establishes this natural isomorphism implicitly appears in the last step of our proof of theorem 9.1.

Each quantum set $\X$ has a maximum classical subset which consists of just its one-dimensional atoms, and which is naturally isomorphic to $`\mathrm{Fun}(\mathbf{1}; \X)$. In particular, the classical subset of $\Y^\X$ is naturally isomorphic to $`\mathrm{Fun}(\mathbf{1}; \Y^\X) \iso `\mathrm{Fun}(\X; \Y)$. Identifying $\Set$ with a subcategory of $\qSet$ via the functor $S \mapsto `S$, we might say that the classical subset of $\Y^\X$ consists of functions from $\X$ to $\Y$.

\begin{proposition}\label{7}
The category $\qSet$ of quantum sets and functions is not Cartesian closed.
\end{proposition}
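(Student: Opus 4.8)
The plan is to exploit the distinction, emphasized throughout, between the monoidal product $\times$ and the categorical product $\ast$: I claim $\qSet$ fails to be closed for its \emph{categorical} product. The outer argument is purely formal. If $\qSet$ were Cartesian closed, then for every quantum set $\X$ the product functor $\X \ast (\,-\,)$ would be a left adjoint, hence would preserve all colimits, and in particular all coproducts $\uplus$. So it suffices to exhibit a single quantum set $\X$ and a single coproduct that $\X \ast (\,-\,)$ does not preserve. I will take $\X = `\{0,1\} = \mathbf 1 \uplus \mathbf 1$ and test it against the coproduct $\mathbf 1 \uplus \mathbf 1$ itself. Since $\mathbf 1$ is the terminal object (theorem \ref{Q10}), it is the unit for $\ast$, so $`\{0,1\} \ast \mathbf 1 \iso `\{0,1\}$; preservation of the coproduct would therefore force
$$`\{0,1\} \ast `\{0,1\} \;\iso\; (`\{0,1\} \ast \mathbf 1) \uplus (`\{0,1\} \ast \mathbf 1) \;\iso\; `\{0,1\} \uplus `\{0,1\},$$
a quantum set whose four atoms are all one-dimensional. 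The proof then reduces to computing $`\{0,1\} \ast `\{0,1\}$ and producing an atom of dimension greater than one.

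Here I would pass through the duality of theorem \ref{Q10}: the categorical product $\ast$ on $\qSet$ is dual to the free product $\circledast$ on $\Mstar1$, and $\ell^\infty(`\{0,1\}) = \CC \oplus \CC$. So I must show that the free product $(\CC \oplus \CC) \circledast (\CC \oplus \CC)$ has a simple matrix summand $L(\CC^2)$, since a $d$-dimensional atom of a quantum set corresponds exactly to an $L(\CC^d)$ summand of its algebra. By the construction of the free product (proposition \ref{3} and the remarks closing section \ref{section 8}), this algebra is assembled from the irreducible finite-dimensional joint representations of the two copies of $\CC \oplus \CC$, so it is enough to produce a single irreducible two-dimensional one. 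A joint representation of $\CC \oplus \CC$ and $\CC \oplus \CC$ on $\CC^2$ is just a pair of projections $(p,q)$ in $L(\CC^2)$, the images of a fixed minimal projection of each factor. Choosing two rank-one projections with distinct, non-orthogonal ranges — say the projections onto $\CC e_1$ and onto $\CC(e_1 + e_2)$ — the commutant is $\CC \cdot 1$, so the joint representation is irreducible and its image generates all of $L(\CC^2)$. By the universal property this representation factors through $(\CC \oplus \CC) \circledast (\CC \oplus \CC)$ via a surjective normal $*$-homomorphism onto $L(\CC^2)$, whose kernel is cut out by a central projection, exhibiting $L(\CC^2)$ as a direct summand. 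Hence $`\{0,1\} \ast `\{0,1\}$ has a two-dimensional atom, contradicting the all-one-dimensional description forced above, so $\qSet$ is not Cartesian closed.

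The main obstacle is this final computation, and specifically the bookkeeping that turns an irreducible joint representation into an honest direct summand of the free product: one must verify that irreducibility yields surjectivity onto $L(\CC^2)$, and that such a surjection of von Neumann algebras splits off a factor, so that the target genuinely registers as an atom of the categorical product. Everything surrounding this step is formal — that right adjoints preserve colimits, that the terminal object is the unit for $\ast$, and that $\ast$ dualizes to $\circledast$ are all immediate from the machinery already in place. I would note in passing that this atom is far from isolated: the two projections can be rotated through a continuum of relative angles, producing continuum-many inequivalent two-dimensional atoms, so $`\{0,1\} \ast `\{0,1\}$ is in fact enormous compared with the naive four-element guess — a vivid measure of how far the categorical product $\ast$ departs from the product of ordinary sets.
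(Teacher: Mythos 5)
Your proof is correct and takes essentially the same route as the paper: both argue that in a Cartesian closed category the categorical product must preserve colimits, and both refute this with the same example, showing $(\mathbf 1 \uplus \mathbf 1) \ast (\mathbf 1 \uplus \mathbf 1) \not\iso (\mathbf 1 \uplus \mathbf 1) \uplus (\mathbf 1 \uplus \mathbf 1)$ by passing to the free product $(\CC\oplus\CC)\circledast(\CC\oplus\CC)$ and its irreducible joint representations. The only difference is the invariant used to detect the failure --- you exhibit a single two-dimensional atom from one explicit irreducible pair of projections, whereas the paper counts atoms (uncountably many versus four); your closing remark about the continuum of relative angles is exactly the paper's observation.
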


Due to a clash of terminology, this proposition refers to the absence of right adjoints for functors of the form $(-) \ast \X$, not for functors of the form $(-) \times \X$. The product $\ast$ is Cartesian in the category-theoretic sense: it has the same universal property in the category $\qSet$ as the ordinary Cartesian product has in the category $\Set$. However, it is the product $\times$ that generalizes the ordinary Cartesian product in a manner appropriate to the noncommutative dictionary.
To begin with, the product $\times$ is a quantum generalization of the ordinary Cartesian product in the sense that $`S \times `T$ is naturally isomorphic to  $`(S \times T)$ for ordinary sets $S$ and $T$, and $\ast$ is not a quantum generalization of the ordinary Cartesian product in this sense. The product $\times$ is also effectively the established generalization of the ordinary Cartesian product in noncommutative geometry. For example, for any quantum set $\X$, a quantum group comultiplication on the von Neumann algebra $\ell^\infty(\X)$ is a unital normal $*$-homomorphism $\ell^\infty(\X) \To \ell^\infty(\X) \tensor \ell^\infty(\X)$, so dually a quantum group multiplication on $\X$ is a function $\X \times \X \To \X$, not a function $\X \ast \X \To \X$. We now also see that evaluation, the counit of the adjunction in Definition \ref{6}, is a function $\Y^\X \times \X \To \Y$.

To draw a conceptual distinction between the two products, we compare the Cartesian product $\X \times \X$ to the total product $\X \ast \X$, for a fixed quantum set $\X$. The Cartesian product $\X \times \X $ intuitively consists of \emph{pairs} of elements of $\X$: an element of $\X$ in one hand, and an element of $\X$ in the other. By contrast, the total product $\X \ast \X \iso \X^{`\{1,2\}}$ intuitively consists of \emph{functions} to $\X$, from the quantum set $`\{1,2\}$. Classically, for each ordinary set $S$, we have a canonical bijective correspondence between functions $\{1,2\} \To S$ and ordered pairs from $S$; we evaluate each function on input $1$ to obtain the first entry of the ordered pair, and then we evaluate it on input $2$ to obtain the second entry of the ordered pair. We have no such correspondence in the quantum setting because we cannot duplicate the function to evaluate it twice; there is no duplication morphism $\X^{`\{1,2\}} \To \X^{`\{1,2\}} \times \X^{`\{1,2\}}$.

\begin{proof}[Proof of Proposition \ref{7}]
In any Cartesian closed category, the category-theoretic product preserves colimits in each variable. The category-theoretic product in $\qSet$ is the total product, and it does not preserve colimits: the quantum set $(\mathbf{1} \uplus \mathbf{1}) \ast (\mathbf{1} \uplus \mathbf{1})$ has uncountably many atoms because there are uncountably many inequivalent irreducible joint representations of a pair of von Neumann algebras, each  isomorphic to $\CC^2$, on any two-dimensional Hilbert space, but the quantum set $[\mathbf{1} \ast (\mathbf{1} \uplus \mathbf{1})] \uplus [\mathbf{1} \ast (\mathbf{1} \uplus \mathbf{1})]$ has only four atoms, because the terminal object $\mathbf{1}$ is also the unit for the total product $\ast$.
\end{proof}

\section{Subobjects of a quantum set}\label{section 10}

Let $\X$ be a quantum set. Recall from category theory that the subobjects of $\X$ are defined via the category of all monomorphisms into $\X$, i.e., of all injective functions into $\X$. A morphism in this category from a monomorphism $\Z_1 \rightarrowtail \X$ to a monomorphism $\Z_2 \rightarrowtail \X$ is a function $\Z_1 \To \Z_2$ making the triangle commute. If such a function $\Z_1 \To \Z_2$ exists, it is unique, so the category of monomorphisms into $\X$ is a preorder. The subobjects of $\X$ are the equivalence classes of this preorder.

\begin{proposition}\label{8}
Let $\X$ be a quantum set. The map taking each subset of $\X$ to the equivalence class of its inclusion into $\X$ is an isomorphism of partial orders.
\end{proposition}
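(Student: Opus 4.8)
The plan is to show that the map $\Z \mapsto [J_\Z^\X]$ sending a subset to the subobject represented by its inclusion is a bijection that is both order-preserving and order-reflecting; any such map between partial orders is automatically an order isomorphism. Recall first that by proposition \ref{Z} the monomorphisms into $\X$ are exactly the injective functions into $\X$, so a subobject of $\X$ is an equivalence class of injective functions. To see that the map is well-defined, I would check directly, atomwise, that $(J_\Z^\X)^\dagger \circ J_\Z^\X = I_\Z$, so that each inclusion is injective and $[J_\Z^\X]$ really is a subobject.

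For order-preservation, given $\Z_1 \subsetof \Z_2 \subsetof \X$ I would exhibit the inclusion $J_{\Z_1}^{\Z_2}\colon \Z_1 \hookrightarrow \Z_2$ as a function witnessing $[J_{\Z_1}^\X] \leq [J_{\Z_2}^\X]$, using the atomwise identity $J_{\Z_2}^\X \circ J_{\Z_1}^{\Z_2} = J_{\Z_1}^\X$. For order-reflection, suppose $[J_{\Z_1}^\X] \leq [J_{\Z_2}^\X]$ via a function $G\colon \Z_1 \To \Z_2$ with $J_{\Z_2}^\X \circ G = J_{\Z_1}^\X$. Fixing $X \atomof \Z_1$, the component $J_{\Z_1}^\X(X,X) = \CC \cdot 1_X$ is nonzero, hence so is $(J_{\Z_2}^\X \circ G)(X,X)$; but this composite component is spanned by products $j g$ with $g \in G(X,Y)$ and $j \in J_{\Z_2}^\X(Y,X)$ for some $Y \atomof \Z_2$, and $J_{\Z_2}^\X(Y,X)$ vanishes unless $Y = X$ with $X \atomof \Z_2$. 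Thus $X \atomof \Z_2$, and varying $X$ gives $\Z_1 \subsetof \Z_2$. These two facts show the map is an order-embedding, hence injective.

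Surjectivity is the substantive step, and it is essentially already contained in the proof of proposition \ref{Z}. I would represent an arbitrary subobject by an injective function $F\colon \W \rightarrowtail \X$; by proposition \ref{Z} the dual $F^\star\colon \ell^\infty(\X) \To \ell^\infty(\W)$ is surjective, so its kernel is $(1-p)\cdot\ell^\infty(\X)$ for a central projection $p$, corresponding to the subset $\Z = \Q\{X \atomof \X \suchthat p(X) = 1_X\}$. By lemma \ref{Y2}, $F^\star$ and $(J_\Z^\X)^\star$ have the same kernel and therefore factor through one another by a von Neumann algebra isomorphism; dualizing via theorem \ref{Q10}, $F$ and $J_\Z^\X$ factor through one another by an isomorphism in $\qSet$, so $[F] = [J_\Z^\X]$. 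The main obstacle is precisely this surjectivity bookkeeping---matching the kernel data of $F^\star$ to the correct subset and confirming the factorizations are isomorphisms---but since the proof of proposition \ref{Z} already produces exactly this factorization for every injective function, it can be invoked almost verbatim.
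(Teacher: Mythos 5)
Your proposal is correct and follows essentially the same route as the paper: surjectivity onto subobjects is extracted from the proof of $(4)\Rightarrow(1)$ in proposition \ref{Z} together with lemma \ref{Y2}, monotonicity from the identity $J_{\Z_2}^\X \circ J_{\Z_1}^{\Z_2} = J_{\Z_1}^\X$, and order-reflection from the same atomwise analysis of the composite $J_{\Z_2}^\X \circ G$. The only cosmetic difference is that you deduce injectivity of the map from its being an order-embedding, whereas the paper deduces it from distinctness of kernels; both are immediate.
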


\begin{proof}
The statement of Proposition \ref{Z}, together with the proof of $(4) \Yields (1)$, shows that every monomorphism into $\X$ is isomorphic to the inclusion function of some subset $\Y \subsetof \X$ into $\X$. Distinct inclusion functions induce homomorphisms with distinct kernels (Lemma \ref{Y2}), so no two distinct inclusions can be isomorphic. Thus, we have a bijection between the subsets of $\X$ and the subobjects of $\X$ in $\qSet$.

If $\Z \subsetof \Y \subsetof \X$, then we may easily check that $J_\Y^\X \circ J_\Z^\Y = J_\Z^\X$, so our bijection is monotone (Definition \ref{Y1}). Conversely, assume that $\Y$ and $\Z$ are subsets of $\X$, and that there is a function $F\: \Z \To \Y$ such that $J_\Y^\X  \circ F = J_\Z^\X$. Let $Z \atomof \Z \subseteq \X$. By definition of composition, we have that
\begin{equation*}\bigvee_{Y \atomof \Y} J_\Y^\X(Y, Z) \cdot F(Z, Y)  = J_\Z^\X(Z, Z).
\end{equation*}
The space $J_\Z^\X(Z, Z)$ is nonempty, so for some $Y \atomof \Y$, we also have that $J_\Y^\X(Y, Z)$ is nonempty, which implies that $Z = Y \atomof \Y$. Therefore, $\Z \subsetof \Y$. We conclude that our bijection is an isomorphism of partial orders.
\end{proof}

Thus, modulo formalities, the subobjects of a quantum set $\X$ are exactly its subsets. A topos, by definition, must have a subobject classifier $\Omega$, an object admitting a bijection, natural in $\X$, between the subobjects of $\X$, and the morphisms from $\X$ to $\Omega$. Assume for the sake of contradiction that the category $\qSet$ has a subobject classifier $\Omega$, and consider its universal property in the dual category $\Mstar1$: the central projections of any hereditarily atomic von Neumann algebra $A$ must be in bijection with the unital normal $*$-homomorphisms from $\ell^\infty(\Omega)$ to $A$. In particular, there must be exactly two unital normal $*$-homomorphisms from $\ell^\infty(\Omega)$ to $L(\CC^n)$ for any positive integer $n$. It follows immediately that $\Omega$ does not have atoms of dimension larger than $1$, and that it must in fact have exactly two atoms; in other words, we conclude $\Omega \iso \mathbf{1} \uplus \mathbf{1}$. But there are uncountably many functions from $ \X = \Q\{\CC^2\}$ to $\mathbf{1} \uplus \mathbf{1}$, contradicting that $\Q\{\CC^2\}$ has only two subobjects. Thus, we have demonstrated the following:

\begin{proposition}\label{9}
The category $\qSet$ of quantum sets and functions does not have a subobject classifier.
\end{proposition}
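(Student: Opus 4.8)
The plan is to argue by contradiction, transporting the hypothetical subobject classifier through the contravariant duality of theorem \ref{Q10}. First I would suppose that $\qSet$ has a subobject classifier $\Omega$, so that for every quantum set $\X$ there is a bijection $\mathrm{Sub}(\X) \iso \Hom(\X, \Omega)$, natural in $\X$. By proposition \ref{8} the subobjects of $\X$ are exactly its subsets, which correspond to the central projections of $\ell^\infty(\X)$; and by theorem \ref{Q10} the functions $\X \To \Omega$ correspond to the unital normal $*$-homomorphisms $\ell^\infty(\Omega) \To \ell^\infty(\X)$. Since every hereditarily atomic von Neumann algebra is some $\ell^\infty(\X)$ (proposition \ref{Q}), the classifier property becomes the purely algebraic assertion that, for each hereditarily atomic $A$, the central projections of $A$ are in bijection with $\Hom_{\Mstar1}(\ell^\infty(\Omega), A)$.

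Next I would pin down $\Omega$ by feeding in matrix algebras. Each $L(\CC^n)$ is a factor, so it has exactly two central projections; hence there must be exactly two unital normal $*$-homomorphisms $\ell^\infty(\Omega) \To L(\CC^n)$ for every $n$. Taking $n = 1$, a unital $*$-homomorphism $\ell^\infty(\Omega) \To \CC$ is a character, and since $M_d$ admits no character for $d \geq 2$, such a character simply selects a one-dimensional atom of $\Omega$; so $\Omega$ has exactly two one-dimensional atoms. To exclude higher-dimensional atoms, I would observe that if $\Omega$ had an atom $W$ with $\dim W = d \geq 2$, then composing a fixed isomorphism $L(W) \iso L(\CC^d)$ with conjugation by the unitaries of $\CC^d$, and extending by zero on the other blocks, produces a unital normal $*$-homomorphism $\ell^\infty(\Omega) \To L(\CC^d)$ for each unitary; distinct unitaries modulo scalars give genuinely distinct homomorphisms, since morphisms in $\Mstar1$ are not taken up to unitary conjugacy, so there would be uncountably many, contradicting the count of two. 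Hence $\Omega$ has no such atom, and $\Omega \iso \mathbf 1 \uplus \mathbf 1$.

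Finally I would derive the contradiction from the single object $\X = \Q\{\CC^2\}$, which has exactly two subsets and hence two subobjects. Its functions to $\mathbf 1 \uplus \mathbf 1$ correspond to unital normal $*$-homomorphisms $\CC \oplus \CC \To L(\CC^2)$, that is, to choices of a projection $p \in L(\CC^2)$ together with its complement $1 - p$; the rank-one projections of $L(\CC^2)$ already form a two-sphere, so there are uncountably many such functions. This contradicts the required bijection with the two subobjects of $\Q\{\CC^2\}$, completing the argument. The main obstacle is precisely the homomorphism-counting in the middle step: one must be careful that $\Hom_{\Mstar1}$ counts honest unital normal $*$-homomorphisms rather than unitary equivalence classes, since it is exactly the abundance of conjugations that both forces $\Omega \iso \mathbf 1 \uplus \mathbf 1$ and then immediately refutes it through $\Q\{\CC^2\}$. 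Everything else is bookkeeping through the duality of theorem \ref{Q10}.
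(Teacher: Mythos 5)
Your proposal is correct and follows essentially the same route as the paper: dualize the classifier property to $\Mstar1$, count unital normal $*$-homomorphisms into the factors $L(\CC^n)$ to force $\Omega \iso \mathbf 1 \uplus \mathbf 1$, and then refute this with the uncountably many functions $\Q\{\CC^2\} \To \mathbf 1 \uplus \mathbf 1$. The only difference is that you spell out the unitary-conjugation count that the paper leaves as ``it follows immediately,'' which is a correct filling-in of that step.
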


However, there is a bijection between the subobjects of a quantum set $\X$ and the ``classical''  functions from $\X$ to $\mathbf 1 \uplus \mathbf 1$. The rest of this section is dedicating to defining and motivating this term.

We begin by examining the Cartesian product monoidal structure on $\qSet$. The unit of the Cartesian product is also the terminal object, so we have projection functions $P_1\: \X_1 \times \X_2 \To \X_1$ and $P_2\: \X_1 \times \X_2 \To \X_2$ defined by $P_1 = U \circ(I_{\X_1} \times\, !\,)$ and $P_2 = U \circ(\,!\,\times I_{\X_2})$, where $!$ denotes the unique map to the terminal object, and $U$ denotes the right or left unitor, as appropriate.

The most significant feature of the Cartesian product monoidal structure we have defined on $\qRel$ is that it coincides with the Cartesian product on ordinary sets. If we think of functions $F_1$ and $F_2$ from a fixed quantum set $\X$ to ordinary sets $`S_1$ and $`S_2$, respectively, as generalized observables, then the compatibility of the two observables should mean the existence of a function from $\X$ to the Cartesian product $`S_1 \times S_2$ that combines the two observables. This leads to the following definition.

\begin{definition}\label{10}
Let $F_1\: \X \To \Y_1$ and $F_2 \: \X \To \Y_2$ be functions. We say that $F_1$ and $F_2$ are \emph{compatible} just in case there is a function $F$ from $\X$ to $\Y_1 \times \Y_2$ such that $P_1 \circ F = F_1$ and $P_2 \circ F = F_2$, where $P_1$ and $P_2$ are the projection functions just defined.
\end{definition}

Viewing this definition in the category of hereditarily atomic von Neumann algebras, $F_1$ and $F_2$ are compatible just in case there is a unital normal $*$-homomorphism $\phi$ from the tensor product $\ell^\infty(\Y_1) \otimes \ell^\infty(\Y_2)$ to $\ell^\infty(\X)$ such that $\phi(b_1 \tensor b_2) = F_1^\star(b_1) \cdot F_2^\star(b_2)$ for all $b_1 \in \ell^\infty(\Y_1)$ and $b_2 \in \ell^\infty(\Y_2)$. In particular, the images of $F_1^\star$ and $F_2^\star$ must commute. This necessary condition is also sufficient, because the categorical tensor product coincides with the spatial tensor product for hereditarily atomic von Neumann algebras \cite{Guichardet66}*{proposition 8.6}. Thus, we have established the following:

\begin{lemma}\label{11}
Let $F_1$ and $F_2$ be functions, as in Definition \ref{10} above. The functions $F_1$ and $F_2$ are compatible if and only if every element in the image of $F_1^\star$ commutes with every element in the image of $F_2^\star$.
\end{lemma}

We use the term ``classical'' for functions that do not consume their arguments:

\begin{definition}\label{12}
A function out of a quantum set $\X$ is \emph{classical} just in case it is compatible with the identity function $I_\X$.  A quantum set $\X$ is said to be \emph{classical} just in case the identity function $I_\X$ is classical.
\end{definition}

\begin{proposition}
A quantum set $\X$ is classical if and only if each atom of $\X$ is one-dimensional.
\end{proposition}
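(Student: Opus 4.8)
The plan is to unwind Definition \ref{12} through Lemma \ref{11} and thereby reduce the statement to the commutativity of the von Neumann algebra $\ell^\infty(\X)$, which can then be read off atomwise. By Definition \ref{12}, the quantum set $\X$ is classical precisely when the identity function $I_\X$ is classical, which in turn means that $I_\X$ is compatible with itself. I would therefore invoke Lemma \ref{11}, which characterizes compatibility of two functions out of $\X$ in terms of the commutation of the images of the associated unital normal $*$-homomorphisms. Applying it with $F_1 = F_2 = I_\X$ gives that $\X$ is classical if and only if every element of the image of $I_\X^\star$ commutes with every element of the image of $I_\X^\star$.

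Next I would observe that $I_\X^\star$ is the identity homomorphism on $\ell^\infty(\X)$. This is immediate from the fact that the assignment $F \mapsto F^\star$ is contravariantly functorial by theorem \ref{Q10}, so it sends the identity function to the identity homomorphism. Consequently the image of $I_\X^\star$ is all of $\ell^\infty(\X)$, and the condition above collapses to the single requirement that $\ell^\infty(\X)$ be commutative.

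Finally, commutativity of $\ell^\infty(\X)$ is checked atomwise. By definition $\ell^\infty(\X)$ is the $\ell^\infty$-direct sum of the algebras $L(X)$ for $X \atomof \X$, with multiplication, and hence commutation, computed coordinatewise; thus $\ell^\infty(\X)$ is commutative if and only if each factor $L(X)$ is commutative. Since $L(X)$ is isomorphic to the matrix algebra of dimension $\dim(X)$, it is commutative exactly when $\dim(X) = 1$. Chaining these equivalences yields that $\X$ is classical if and only if every atom of $\X$ is one-dimensional. I do not anticipate a genuine obstacle here; the only point requiring a little care is verifying that $I_\X^\star$ is the identity, which follows at once from functoriality once the duality of theorem \ref{Q10} is in hand.
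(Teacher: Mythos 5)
Your proposal is correct and follows the same route as the paper: reduce via Definition \ref{12} and Lemma \ref{11} to commutativity of $\ell^\infty(\X)$, then check commutativity atomwise. The paper's proof is just a terser version of exactly this argument.
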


\begin{proof}
By Lemma \ref{11}, the quantum set $\X$ is classical if and only if $\ell^\infty(\X)$ is commutative, or equivalently, if and only if the matrix algebra $L(X)$ is commutative for each $X \atomof \X$. 
\end{proof}

\begin{lemma}\label{12.5}
Let $F$ be a function from a quantum set $\X$ to a quantum set $\Y$. The following are equivalent:
\begin{enumerate}
\item $F$ is classical
\item $F$ is compatible with every function out of $\X$
\item $F$ factors through the canonical function $Q \: \X \twoheadrightarrow `\At(\X)$
$$\begin{tikzcd}
\X  \arrow[bend left=15]{rr}{F} \arrow[two heads]{r}[swap]{Q}& `\At(\X) \arrow[dotted]{r}  & \Y
\end{tikzcd}$$
\end{enumerate}
\end{lemma}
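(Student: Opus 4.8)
The plan is to translate the entire statement across the contravariant equivalence of Theorem~\ref{Q10}, under which a function $F\:\X\To\Y$ corresponds to the unital normal $*$-homomorphism $F^\star\:\ell^\infty(\Y)\To\ell^\infty(\X)$, and to reformulate ``classical'' as the assertion that the image of $F^\star$ lies in the center of $\ell^\infty(\X)$. The first preparatory step is to compute that center: since $\ell^\infty(\X)$ is the bounded part of the product $\prod_{X\atomof\X}L(X)$, an element is central if and only if each of its components lies in the center $\CC\cdot 1_X$ of $L(X)$, so the center of $\ell^\infty(\X)$ is $\prod_{X\atomof\X}\CC\cdot 1_X$. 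The second preparatory step is to identify the canonical surjection $Q\:\X\twoheadrightarrow `\At(\X)$: its dual $Q^\star\:\ell^\infty(`\At(\X))\To\ell^\infty(\X)$ is the inclusion $(\lambda_X)_X\mapsto(\lambda_X\cdot 1_X)_X$, which is an isomorphism of $\ell^\infty(`\At(\X))$ onto exactly this center. In particular $Q^\star$ is injective with ultraweakly closed image, consistent with $Q$ being surjective by Proposition~\ref{Y}.

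With these in hand, the equivalence $(1)\Leftrightarrow(2)$ is the short part. By Definition~\ref{12}, condition $(1)$ says $F$ is compatible with $I_\X$; applying Lemma~\ref{11} and noting that $(I_\X)^\star$ is the identity on $\ell^\infty(\X)$, whose image is all of $\ell^\infty(\X)$, this says precisely that every element in the image of $F^\star$ commutes with every element of $\ell^\infty(\X)$, i.e.\ that the image of $F^\star$ is central. Now $(2)\Rightarrow(1)$ is immediate because $I_\X$ is itself a function out of $\X$. For $(1)\Rightarrow(2)$, if the image of $F^\star$ is central then it commutes with the image of $G^\star$ for every function $G$ out of $\X$, so $F$ is compatible with $G$ by Lemma~\ref{11}.

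It remains to prove $(1)\Leftrightarrow(3)$, which is where the real content lies. If $F$ factors as $F=\tilde F\circ Q$ for some $\tilde F\:`\At(\X)\To\Y$, then contravariance of $(\,-\,)^\star$ gives $F^\star=Q^\star\circ\tilde F^\star$, so the image of $F^\star$ lies in the image of $Q^\star$, which is the center; this is condition $(1)$. Conversely, assuming $(1)$, the image of $F^\star$ lies in the center, which is the image of the injective normal $*$-homomorphism $Q^\star$, so I would define $\tilde F^\star=(Q^\star)\inv\circ F^\star$, where $(Q^\star)\inv$ is the inverse of the isomorphism of $\ell^\infty(`\At(\X))$ onto the center. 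Then $\tilde F^\star$ is a unital normal $*$-homomorphism $\ell^\infty(\Y)\To\ell^\infty(`\At(\X))$, and Theorem~\ref{Q10} produces a function $\tilde F\:`\At(\X)\To\Y$ dual to it; dualizing $F^\star=Q^\star\circ\tilde F^\star$ yields $F=\tilde F\circ Q$. The step needing the most care is this converse factorization: one must check that $(Q^\star)\inv$ is genuinely a normal $*$-homomorphism, which rests on $Q^\star$ being an isomorphism onto an ultraweakly closed subalgebra (the center), and that $\tilde F^\star$ is unital, which follows since both $F^\star$ and $Q^\star$ are unital.
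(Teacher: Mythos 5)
Your proof is correct and follows essentially the same route as the paper: translate everything through the duality of Theorem~\ref{Q10}, use Lemma~\ref{11} to identify ``classical'' with ``the image of $F^\star$ is central,'' and identify $Q^\star$ with the inclusion of the center $\prod_{X \atomof \X} \CC \cdot 1_X$ into $\ell^\infty(\X)$. You simply spell out the computation of the center and the inverse-of-$Q^\star$ step that the paper leaves implicit.
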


The canonical surjection $Q\: \X \To `\At(\X)$ is intuitively the quotient map that contracts each atom to be one-dimensional; it is defined by $Q(X, \CC_{X}) = L(X, \CC_{X})$ for $X \atomof \X$, with the other components vanishing. 

\begin{proof}
By Lemma \ref{11}, the function $F$ is classical if and only if the image of $F^\star$ is in the center of $\ell^\infty(\X)$. The latter condition is equivalent to $F$ factoring through the inclusion of the center into $\ell^\infty(\X)$, or equivalently, through $Q^\star$. Thus, $(1)$ is equivalent to $(3)$.

If $F$ is compatible with $I_\X$, then it is also compatible with $G \circ I_\X= G$, for any function $G$ out of $\X$, so $(1)$ implies $(2)$. The converse is trivial.
\end{proof}

\begin{proposition}\label{12.8}
Let $\X$ and $\Y$ be quantum sets. For every classical function $F$ from $\X$ to $\Y$, there is a unique ordinary function $f$ from $\At(\X)$ to $\mathrm{Fun}(\mathbf 1; \Y)$ such that $F = J \circ `f \circ Q$, where $Q$ is the canonical surjection from $\X$ to $`\At(\X)$, and $J$ is the canonical injection from $`\mathrm{Fun}(\mathbf 1; \Y)$ to $\Y$.
$$
\begin{tikzcd}
\X \arrow{r}{F} \arrow[two heads]{d}{Q}
&
\Y
\\
`\At(\X) \arrow[dotted]{r}{`f}
&
`\mathrm{Fun}(\mathbf 1; \Y) \arrow[tail]{u}{J}
\end{tikzcd}
$$
\end{proposition}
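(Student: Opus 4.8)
The plan is to construct $f$ atom-by-atom and then assemble it using the coproduct structure of $`\At(\X)$. First I would invoke lemma \ref{12.5}: since $F$ is classical, it factors as $F = G \circ Q$ for some function $G\: `\At(\X) \To \Y$, and this $G$ is uniquely determined because $Q$ is surjective, hence epic (proposition \ref{Y}). It then suffices to produce a unique ordinary function $f\: \At(\X) \To \mathrm{Fun}(\mathbf 1; \Y)$ with $G = J \circ `f$, for then $F = J \circ `f \circ Q$ as required.

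Next I would exploit that $`\At(\X)$ is the disjoint union $\biguplus_{X \atomof \X} \mathbf 1$ of singletons, one for each atom of $\X$; this disjoint union is the coproduct in $\qSet$ (section \ref{section 8}), with injections $\iota_X \: \mathbf 1 \iso \Q\{\CC_X\} \hookrightarrow `\At(\X)$. By the universal property of the coproduct, $G$ is completely determined by the family of composites $G \circ \iota_X \: \mathbf 1 \To \Y$. Each such composite is by definition an element of $\mathrm{Fun}(\mathbf 1; \Y)$, so I would simply set $f(X) = G \circ \iota_X$.

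It remains to check $G = J \circ `f$, which by the universal property reduces to verifying $G \circ \iota_X = J \circ `f \circ \iota_X$ for every $X \atomof \X$. The left side is $f(X)$ by construction. For the right side, functoriality of $`(\,-\,)$ identifies $`f \circ \iota_X$ with the coproduct injection of $`\mathrm{Fun}(\mathbf 1; \Y)$ that picks out the point $f(X)$, and the natural isomorphism between $`\mathrm{Fun}(\mathbf 1; \Y)$ and the classical subset of $\Y$ (section \ref{section 9}) makes $J$ carry this point back to the function $f(X)\: \mathbf 1 \To \Y$ itself. This last identification --- that composing $J$ with the point named by $g \in \mathrm{Fun}(\mathbf 1; \Y)$ returns $g$ --- is the only genuinely delicate step, and I expect it to be the main obstacle; everything else is bookkeeping with coproducts. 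It holds essentially by the definition of the isomorphism that realizes $`\mathrm{Fun}(\mathbf 1; \Y)$ as the one-dimensional atoms of $\Y$, so I would be careful to spell out exactly how $J$ acts on atoms.

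Finally, for uniqueness, suppose $F = J \circ `f' \circ Q$ for a second ordinary function $f'$. Cancelling the epimorphism $Q$ on the right gives $J \circ `f = J \circ `f'$; cancelling $J$ on the left --- legitimate since $J$ is injective, hence monic (proposition \ref{Z}) --- gives $`f = `f'$; and since the functor $S \mapsto `S$ is faithful (section \ref{section 3}), we conclude $f = f'$.
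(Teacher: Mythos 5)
Your argument is correct, and its existence half takes a genuinely different route from the paper's. You share the first reduction (via lemma \ref{12.5}, factor $F = G \circ Q$ and note $G$ is unique because $Q$ is epic) and the uniqueness argument (cancel the epimorphism $Q$ on the right and the monomorphism $J$ on the left, then use faithfulness of $S \mapsto {}`S$). Where you diverge is in producing $f$: the paper passes to the dual category $\Mstar{1}$ and observes that a unital normal $*$-homomorphism into a commutative von Neumann algebra must annihilate every noncommutative factor $L(Y)$, hence factors through the quotient onto the commutative part, i.e.\ through $J^\star$. You instead stay in $\qSet$, write the classical set $`\At(\X)$ as the coproduct $\biguplus_{X \atomof \X} \mathbf 1$, and define $f(X) = G \circ \iota_X$ pointwise. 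Your route is more elementary in that it avoids the operator-algebraic dual entirely, but it leans on two facts the paper's argument does not need explicitly: that the disjoint union of singletons is the coproduct in $\qSet$ (not merely in $\qRel$), and that every function $\mathbf 1 \To \Y$ is supported on a single one-dimensional atom of $\Y$ --- equivalently, the identification of $`\mathrm{Fun}(\mathbf 1; \Y)$ with the maximum classical subset of $\Y$ stated in section \ref{section 9}. The step you flag as delicate, that $J$ composed with the point named by $g \in \mathrm{Fun}(\mathbf 1;\Y)$ returns $g$ itself, does hold by the construction of that identification, so your proof goes through.
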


Up to canonical natural isomorphism, the functor $`\mathrm{Fun}( \mathbf 1 ; \, - \, )$ takes each quantum set $\Y$ to its maximum classical subset, and the injection $J$ is its inclusion function.

\begin{proof}
In light of Lemma \ref{12.5}, to establish the existence of $f$, it is sufficient to show that every function from a classical quantum set $`T$ to the quantum set $\Y$ factors through the inclusion function of the maximum classical subset of $\Y$. Reasoning in the category of hereditarily atomic von Neumann algebras, we simply observe that any unital normal $*$-homomorphism into a commutative von Neumann algebra must vanish on each noncommutative factor, and therefore must factor through the quotient by these noncommutative factors. The uniqueness of $f$ follows by Propositions \ref{Y} and \ref{Z}.
\end{proof}

\begin{proposition}\label{13}
Let $\X$ and $\Z$ be quantum sets.
For every injective function $J\: \Z \rightarrowtail \X$, there is a unique classical function $F\:\X \To \mathbf 1 \uplus \mathbf 1$ such that the following diagram is a pullback square:
$$
\begin{tikzcd}
\Z \arrow{r} \arrow[tail]{d}[swap]{J}
&
\mathbf 1 \arrow[tail]{d}{T}
\\
\X \arrow[dotted]{r}{!}[swap]{F}
&
\mathbf 1 \uplus \mathbf 1
\end{tikzcd}
$$
The function $\Z\To \mathbf 1$ at the top of the diagram is the unique function from $\Z$ to the terminal object $\mathbf 1$. The function $T\: \mathbf 1 \rightarrowtail \mathbf 1 \uplus \mathbf 1$ is the injection taking the singleton to the second summand of $\mathbf 1 \uplus \mathbf 1$. 
\end{proposition}

\begin{proof}
Equivalently, we are to show that for every surjective unital normal $*$-homomorphism $\pi$ from a hereditarily atomic von Neumann algebra $A$ to a hereditarily atomic von Neumann algebra $C$, there is a unique unital normal $*$-homomophism $\phi\: \CC^2 \To A$ with central image that makes the following diagram a pushforward square:
$$
\begin{tikzcd}
C
&
\CC \arrow{l}
\\
A  \arrow[two heads]{u}[swap]{\pi}
&
\CC^2 \arrow[dotted]{l}{\phi}[swap]{!}  \arrow{u}
\end{tikzcd}
$$

A unital normal $*$-homomorphism $\phi\: \CC^2 \To A$ with central image is simply a way of writing the unit of $A$ as a sum of two central projections $p_1$ and $p_2$. A cocone on the diagram
$$
\begin{tikzcd}
&
\CC 
\\
A 
&
\CC^2 \arrow{l}{\phi}  \arrow{u}
\end{tikzcd}
$$
is essentially just a unital normal $*$-homomorphism from $A$ to a hereditarily atomic von Neumann algebra $B$ that takes the second central projection $p_2$ in $A$ to the identity of $B$, so the colimit of this diagram is the quotient map $A \twoheadrightarrow p_2A$. 

Up to isomorphism of codomains, every surjective unital normal $*$-homomorphism $\pi\: A \To C$ is of this form, so there does exist a unital normal $*$-homomorphism $\phi\: \CC^2 \To A$ with central image making the first diagram of the proof into a pushforward square. This $\phi$ is unique, because it is completely determined by the projection $p_2$, and two quotient maps $A \twoheadrightarrow p_2A$ and $A \twoheadrightarrow p_2'A$ are distinct up to isomorphism of codomains whenever $p_2$ and $p_2'$ are distinct central projections.
\end{proof}

We have shown that the category $\qSet$ behaves in many ways like a topos, as discussed in section \ref{section 1.2}.

\section{Operators as functions on a quantum set}\label{section 11}

\begin{definition}
Let $\X$ be a quantum set. An \emph{observable} on $\X$ is a function from $\X$ to $`\RR$.
\end{definition}

We begin this section by demonstrating that the observables on $\X$ are in bijective correspondence with the self-adjoint operators in $\ell(\X)$. We later show that under this correspondence, the sum of two self-adjoint operators arises in the expected way from a function $`\RR \ast `\RR \To `\RR$.

Write $\mathrm{Herm}(A)$ for the vector space of self-adjoint elements of a von Neumann algebra $A$.

\begin{proposition}\label{FH}
The contravariant functors $\mathrm{Fun}(\,-\,; `\RR)$ and $\mathrm{Herm}(\ell(\,-\,))$, from the category of quantum sets and functions to the category of sets and functions, are naturally isomorphic. The natural isomorphism takes each function $F \: \X \To `\RR $ to the self-adjoint operator $F^\star(r)$, where $r$ is the element of $\ell(`\RR)$ defined by $r(\CC_{\alpha}) = \alpha$.
\end{proposition}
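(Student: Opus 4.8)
The plan is to establish the bijection atom-by-atom using the spectral theorem for self-adjoint matrices, and to obtain naturality for free from the contravariant functoriality of $(\,-\,)^\star$. Throughout I work with $\ell(\,-\,)$ and continuous $*$-homomorphisms rather than $\ell^\infty(\,-\,)$ and normal ones, because $r$ is unbounded and hence not an element of $\ell^\infty(`\RR)$.

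First the well-definedness and naturality, which are immediate. Each component $r(\CC_\alpha) = \alpha \cdot 1_{\CC_\alpha}$ is self-adjoint, so $r$ is self-adjoint in $\ell(`\RR)$; since $F^\star$ is a $*$-homomorphism, $F^\star(r)$ is self-adjoint in $\ell(\X)$, so the map lands in $\mathrm{Herm}(\ell(\X))$. For naturality, given $G\:\X \To \Y$ and $F\: \Y \To `\RR$, contravariant functoriality (proposition \ref{Q5}, theorem \ref{Q10}) gives $(F \circ G)^\star = G^\star \circ F^\star$, whence $(F \circ G)^\star(r) = G^\star(F^\star(r))$. The left side is the image of $\mathrm{Fun}(G; `\RR)(F) = F \circ G$ under our map, and the right side is $\mathrm{Herm}(\ell(G))$ applied to the image of $F$; this is exactly the commutativity of the naturality square.

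The substance is bijectivity for a fixed $\X$. By theorem \ref{Q3} together with lemma \ref{Q8}, functions $F\: \X \To `\RR$ correspond bijectively to unital continuous $*$-homomorphisms $\phi = F^\star\: \ell(`\RR) \To \ell(\X)$. Since every atom of $`\RR$ is a one-dimensional space $\CC_\alpha$ with $L(\CC_\alpha) \iso \CC$, lemma \ref{Q3.1} decomposes $\phi$ into the family $\phi_X^{\CC_\alpha}\: \CC \To L(X)$, each determined by the projection $p_{X,\alpha} = \phi_X^{\CC_\alpha}(1)$. The orthogonality condition of lemma \ref{Q3.1} reads $p_{X,\alpha} p_{X,\beta} = 0$ for $\alpha \neq \beta$, and unitality (lemma \ref{Q8}) reads $\sum_{\alpha} p_{X,\alpha} = 1_X$, with only finitely many nonzero terms for each atom $X$. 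Using $\phi(b)(X) = \sum_{\alpha} \phi_X^{\CC_\alpha}(b(\CC_\alpha))$ and $r(\CC_\alpha) = \alpha\cdot 1$ yields
$$ F^\star(r)(X) = \sum_{\alpha \in \RR} \phi_X^{\CC_\alpha}(\alpha \cdot 1) = \sum_{\alpha \in \RR} \alpha \, p_{X,\alpha}, $$
a self-adjoint operator on $X$ presented through a family of pairwise-orthogonal spectral projections.

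To finish, I invoke the spectral theorem for self-adjoint matrices, which states precisely that each self-adjoint $a(X) \in L(X)$ admits a \emph{unique} expression $a(X) = \sum_{\alpha} \alpha\, p_{\alpha}$ as a real-weighted sum of pairwise-orthogonal projections summing to $1_X$, the $p_\alpha$ being the spectral projections of $a(X)$. This gives a two-sided inverse: from self-adjoint $a \in \ell(\X)$ read off the spectral projections $p_{X,\alpha}$ on each atom, assemble $\phi_X^{\CC_\alpha}(1) = p_{X,\alpha}$ into a unital continuous $*$-homomorphism via lemma \ref{Q3.1}, hence into a function $F_a\: \X \To `\RR$; the displayed computation shows $F_a^\star(r) = a$, while uniqueness of the decomposition shows that decomposing $F^\star(r)(X)$ recovers the original projections, so $F_{F^\star(r)} = F$. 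Thus the map is a bijection on each $\X$, natural in $\X$. The main obstacle is purely the bookkeeping of identifying the abstract operator $F^\star(r)$ with the concrete spectral-decomposition picture through the chain of correspondences in theorem \ref{Q3}, lemma \ref{Q3.1}, and lemma \ref{Q8}; once that identification is made, the mathematical content reduces to the finite-dimensional spectral theorem.
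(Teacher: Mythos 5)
Your proposal is correct and follows essentially the same route as the paper: both reduce the statement to the correspondence between unital homomorphisms out of $\ell(`\RR)$ (equivalently, families of pairwise-orthogonal projections indexed by $\RR$ summing to the identity) and spectral decompositions of self-adjoint elements, with naturality coming from the contravariant functoriality of $(\,-\,)^\star$. You merely unfold the paper's assertion that $\psi \mapsto \sum_{\alpha}\alpha\,\psi(e_\alpha)$ is a bijection into an explicit atomwise argument via lemma \ref{Q3.1} and the finite-dimensional spectral theorem, which is a harmless elaboration of the same idea.
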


\begin{proof}
For each function $F\: \X \To \Y$ we consider the following diagram:
$$
\begin{tikzcd}
\mathrm{Fun}(\Y; `\RR) \arrow{r}{\iso} \arrow{d}{\circ F}
&
\Hom_1(\ell^\infty(`\RR),\ell^\infty(\Y)) \arrow{d}{F^\star \circ} \arrow{r}{\iso}
&
\mathrm{Herm}(\ell(\Y)) \arrow{d}{F^\star}
\\
\mathrm{Fun}(\X; `\RR) \arrow{r}{\iso}
&
\Hom_1(\ell^\infty(`\RR),\ell^\infty(\X)) \arrow{r}{\iso}
&
\mathrm{Herm}(\ell(\X))
\end{tikzcd}
$$
The commutative square on the left comes from the contravariant equivalence of $\qSet$ and $\Mstar1$. The isomorphism on the top right is defined to take each $\psi \in \Hom_1(\ell^\infty(`\RR),\ell^\infty(\Y))$ to the self-adjoint operator $\sum_{\alpha \in \RR} \alpha \cdot \psi(e_\alpha)$ in $\ell(\Y)$, where for each real number $\alpha$, we write $e_\alpha$ for the corresponding minimal projection in $\ell^\infty(`\RR)$. The isomorphism on the bottom right is defined likewise.

The square on the right commutes because for each unital normal $*$-homomorphism $\psi$ from $\ell^\infty(\RR)$ to $\ell^\infty(\Y)$,
\begin{equation*} F^\star \left( \sum_{\alpha \in \RR} \alpha \cdot \psi(e_\alpha) \right)= \sum_{\alpha \in \RR} \alpha \cdot F^\star (\psi(e_\alpha)) = \sum_{\alpha \in \RR} \alpha \cdot (F^\star \circ \psi) (e_\alpha).
\end{equation*}
The interchange of $F^\star$ with summation is justified because $F^\star$ is continuous as a function from $\ell(\Y)$ to $\ell(\X)$. Therefore, we have a natural isomorphism between the functors $\mathrm{Fun}(\,-\,; `\RR)$ and $\mathrm{Herm}(\ell(\,-\,))$.

Setting $\Y = `\RR$, and tracking the identity function $I \in \mathrm{Fun}(\Y; `\RR)$ through the diagram, we find that our natural isomorphism takes $F= I \circ F \in \mathrm{Fun}(\X; `\RR)$ to $F^\star(\sum_{\alpha \in \RR} \alpha e_\alpha) = F^\star(r)$.
\end{proof}

\begin{definition}
Write $Q_1$ and $Q_2$ for the two projection functions $`\RR \ast `\RR \To `\RR$. Define $\bm{+}\: `\RR \ast `\RR \To `\RR$ to be the unique function such that $\bm + ^\star (r) = Q_1^\star(r) + Q_2^\star(r)$, where $r$ is the element of $\ell(`\RR)$ defined by $r(\CC_{\alpha}) = \alpha$.
\end{definition}

\begin{definition}
Let $F_1$ and $F_2$ be observables on a quantum set $\X$. Their sum $F_1 + F_2$ is defined to be the observable $\bm + \circ \< F_1, F_2\>$, where $\< F_1, F_2\>$ is the unique function from $\X$ to $` \RR \ast `\RR$ defined by $Q_1 \circ \< F_1 , F_2 \> = F_1$ and $Q_2 \circ \< F_1, F_2 \> = F_2$.
\end{definition}

\begin{proposition}
Under the natural isomorphism of Proposition \ref{FH}, the sum of observables on $\X$ corresponds to the sum of self-adjoint operators in $\ell(\X)$.
\end{proposition}

\begin{proof}
This is immediate from the definition:
\begin{align*}(F_1 + F_2)^\star(r) & = \< F_1, F_2 \>^\star(\bm + ^\star (r)) = \< F_1, F_2 \>^\star (Q_1^\star (r) + Q_2^\star(r))  \\ & = \< F_1, F_2 \>^\star (Q_1^\star(r)) + \< F_1, F_2 \>^\star (Q_2^\star(r)) \\  &= F_1^\star(r) + F_2^\star(r) %\qedhere
\end{align*} 
\end{proof}

Thus, we may think of the observables on a quantum set $\X$ as being functions into $`\RR$, and we may think of their additive structure as coming from the additive structure on $`\RR$, just as we do classically. The structure $(`\RR, +, `0)$ is like a quantum group, but its group product is defined on the total product $`\RR \ast `\RR$, as opposed to the Cartesian product $`\RR \times `\RR$. The group product of a quantum group is in effect defined on just the Cartesian product of that quantum group with itself.

The product of two self-adjoint operators is generally not self-adjoint, but it is possible to extend the ordinary product operation of the real numbers from the Cartesian product $`\RR \times `\RR$ to the total product $`\RR \ast `\RR$, by working with the Jordan product. However, we will instead turn to the $*$-algebra $\ell(\X)$, and work with the product there.

\begin{lemma}\label{lc}
The contravariant functors $\mathrm{Fun}(\,-\,; `\RR \ast `\RR)$ and $\ell(\,-\,)$, from the category of quantum sets and functions to the category of sets and functions, are naturally isomorphic. The natural isomorphism takes each function $F\: \X \To `\RR \ast `\RR$ to the self-adjoint operator $F^\star(s)$, where $s$ is the element of $\ell(`\RR \ast `\RR)$ defined by $s= Q_1^\star(r) + i Q_2^\star(r)$, and $r$ is the element of $\ell(`\RR)$ defined by $r(\CC_{\alpha}) = \alpha$.
\end{lemma}

\begin{proof}
We compose natural isomorphisms:
\begin{equation*}\mathrm{Fun}(\X; `\RR \ast `\RR) \iso \mathrm{Fun}(\X; `\RR) \times \mathrm{Fun}(\X; `\RR) \iso \mathrm{Herm}(\ell(\X)) \times \mathrm{Herm}(\ell(\X)) \iso \ell(\X)
\end{equation*}
The first isomorphism is the universal property of the total product; the second isomorphism is from Proposition \ref{FH}; and the third isomorphism is the bijection taking each pair of self-adjoint operators $(a_1, a_2)$ to the operator $a_1 + i a_2$. We track a function $F$ from a quantum set $\X$ to $`\RR \ast `\RR$ through this chain of bijections:
\begin{equation*}
F \mapsto (Q_1 \circ F, Q_2 \circ F) \mapsto ( (Q_1 \circ F)^\star(r), (Q_2 \circ F)^\star(r))   \mapsto (Q_1 \circ F)^\star(r) + i (Q_2 \circ F)^\star(r)
\end{equation*}
\begin{equation*} 
(Q_1 \circ F)^\star(r) + i (Q_2 \circ F)^\star(r) = F^\star(Q_1^\star(r)) + i F^\star(Q_2^\star(r)) = F^\star(Q_1^\star(r) + i Q_2^\star(r)) = F^\star(s) %\qedhere
\end{equation*}\end{proof}

\begin{definition}\label{quantum c}
Define $\C = `\RR \ast `\RR$. Define $s \in \ell(\C)$ as in Lemma \ref{lc}. Define
\begin{enumerate}
\item a function $\bm + \: \C \ast \C \To \C$ by $\bm + ^\star (s) = Q_1^\star(s) + Q_2^\star(s)$,
\item a function $\bm \cdot \: \C \ast \C \to \C$ by $\bm \cdot ^\star(s) = Q_1^\star(s)Q_2^\star(s)$, and
\item a function $\overline{\phantom{s}}\: \C \To \C$ by $\overline{\phantom{s}}^\star(s) = s^\dagger$.
\end{enumerate}
Define the inclusion function $`\CC \hookrightarrow \C$ to be the canonical injective function $\< P_1, P_2 \>\: `\RR \times `\RR \rightarrowtail `\RR \ast `\RR$.
\end{definition}

\begin{definition}\label{sa}
Let $\X$ be a quantum set, and let $F_1$ and $F_2$ be functions from $\X$ to $\C$. Define
\begin{enumerate}
\item $F_1 + F_2 = \bm + \circ \< F_1, F_2 \>$,
\item $F_1 \cdot F_2 =  \bm \cdot \circ \< F_1, F_2 \>$, and
\item $\overline{F_1} = \overline{\phantom{s}} \circ F_1$.
\end{enumerate}
For each complex number $\alpha$, define $\mathrm{Cst}_\alpha$ to be the function from $\X$ to $\C$ given by the following composition:
$$
\begin{tikzcd}
\X \arrow{r}{!}
&
`\{\ast\} \arrow{r}{`\alpha}
&
`\CC \arrow[hook]{r}
& 
\C
\end{tikzcd}
$$

\end{definition}

\begin{theorem}\label{opalgebra}
The contravariant functors $\mathrm{Fun}(\,-\,; \C)$ and $\ell(\,-\,)$, from the category of quantum sets and functions, to the category of unital $*$-algebras over $\CC$ and unital $*$-homo\-morphisms, are naturally isomorphic. The natural isomorphism takes each function $F\: \X \To \C$ to the operator $F^\star(s)$.
\end{theorem}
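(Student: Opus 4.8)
The plan is to upgrade the natural bijection already supplied by Lemma \ref{lc} into a natural isomorphism of functors valued in unital $*$-algebras. Write $\Phi_\X \colon \mathrm{Fun}(\X;\C) \To \ell(\X)$ for the map $F \mapsto F^\star(s)$; by Lemma \ref{lc} each $\Phi_\X$ is a bijection, and the family $(\Phi_\X)$ is natural in $\X$ as a transformation of functors into $\Set$. Since a natural transformation of functors into the category of unital $*$-algebras carries exactly the same component data, and its naturality squares are the same squares that already commute in $\Set$, it suffices to check two things: that each $\Phi_\X$ is a homomorphism of unital $*$-algebras, hence an isomorphism since it is bijective; and that both functors genuinely land in unital $*$-algebras, that is, that the two families of induced maps are unital $*$-homomorphisms. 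Throughout I would work in $\ell$ rather than $\ell^\infty$, because the element $s$ is unbounded; the relevant fact is that for any partial function $F$ the map $F^\star$ is a continuous $*$-homomorphism of the full product algebras (Theorem \ref{Q3}, Definition \ref{Q11}).

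First I would verify that $\Phi_\X$ respects the three operations of Definition \ref{sa}, following verbatim the template already used for the sum of observables. For the product, contravariance of $(-)^\star$, the definition $\bm\cdot^\star(s) = Q_1^\star(s)\,Q_2^\star(s)$, and the fact that $\<F_1,F_2\>^\star$ is multiplicative give
\[
(F_1 \cdot F_2)^\star(s) = \<F_1,F_2\>^\star\big(Q_1^\star(s)\,Q_2^\star(s)\big) = \<F_1,F_2\>^\star\big(Q_1^\star(s)\big)\cdot\<F_1,F_2\>^\star\big(Q_2^\star(s)\big) = F_1^\star(s)\,F_2^\star(s),
\]
where the last step uses $Q_i \circ \<F_1,F_2\> = F_i$. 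The identical computation with $\bm\cdot$ replaced by $\bm +$ and multiplicativity replaced by additivity yields $(F_1+F_2)^\star(s) = F_1^\star(s) + F_2^\star(s)$, and $\overline{F_1}^\star(s) = F_1^\star(\overline{\phantom{s}}^\star(s)) = F_1^\star(s^\dagger) = F_1^\star(s)^\dagger$, since $F_1^\star$ preserves $\dagger$. Here it is worth noting that $\ell(\X)$ is noncommutative and that the ordering in $\bm\cdot^\star(s) = Q_1^\star(s)Q_2^\star(s)$ is preserved throughout, so no commutativity is silently assumed.

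Next I would pin down the unital $\CC$-algebra structure by computing $\mathrm{Cst}_\alpha^\star(s) = \alpha\cdot 1_{\ell(\X)}$ for every $\alpha \in \CC$. Unwinding $\mathrm{Cst}_\alpha$ as the composite $\X \xrightarrow{\,!\,} \mathbf 1 \to `\CC \hookrightarrow \C$ and using that the inclusion $`\CC \hookrightarrow \C$ is $\<P_1,P_2\>$, so that $Q_j$ restricts to $P_j$, the pullback of $s = Q_1^\star(r) + i\,Q_2^\star(r)$ along the inclusion is $P_1^\star(r) + i\,P_2^\star(r) \in \ell(`\CC)$, the standard generator of $\ell(`\CC)$; evaluating along the point $\alpha$ and then along $!$ yields $\alpha\cdot 1_{\ell(\X)}$. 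Taking $\alpha = 1$ gives unitality, and combined with the multiplicativity just established it gives $\CC$-linearity, because scalar multiplication on $\mathrm{Fun}(\X;\C)$ is multiplication by $\mathrm{Cst}_\alpha$. Thus each $\Phi_\X$ is a unital $*$-algebra isomorphism, and $\mathrm{Fun}(\X;\C)$ acquires its unital $*$-algebra structure by transport along $\Phi_\X$.

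Finally I would confirm that both functors are valued in unital $*$-algebras. For $\ell(-)$, the map $G^\star$ attached to a function $G$ is a unital $*$-homomorphism by Theorem \ref{Q3} together with Lemma \ref{Q8}, which supplies $G^\star(1)=1$. For $\mathrm{Fun}(-;\C)$, precomposition $(-)\circ G$ is a unital $*$-homomorphism because $\<F_1,F_2\> \circ G = \<F_1\circ G, F_2 \circ G\>$ and $\mathrm{Cst}_\alpha \circ G = \mathrm{Cst}_\alpha$, so it commutes with the operations of Definition \ref{sa}; the naturality squares then coincide with those of Lemma \ref{lc}. I do not expect a genuine obstacle: every step mirrors the already-established correspondence for sums of observables, and the only points that require care are staying in $\ell$ rather than $\ell^\infty$, since $s$ is unbounded, and the small bookkeeping identifying $\mathrm{Cst}_\alpha^\star(s)$ with $\alpha\cdot 1$.
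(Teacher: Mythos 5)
Your proposal is correct and follows essentially the same route as the paper: the paper's proof likewise reduces to checking that the bijection of Lemma \ref{lc} respects the operations of Definition \ref{sa}, and carries out the same four computations for $\bm+$, $\bm\cdot$, $\overline{\phantom{s}}$, and $\mathrm{Cst}_\alpha$ using contravariance of $(-)^\star$ and the identities $Q_i \circ \<F_1,F_2\> = F_i$. Your additional remarks about working in $\ell$ rather than $\ell^\infty$ (since $s$ is unbounded) and about both functors landing in unital $*$-algebras are consistent with, and implicit in, the paper's treatment.
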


\begin{proof}
It remains only to show that the natural isomorphism of Lemma \ref{lc} respects the structure of Definition \ref{sa}.

\begin{align*}(F_1 + F_2)^\star(s) & = \< F_1, F_2 \>^\star(\bm + ^\star (s)) = \< F_1, F_2 \>^\star (Q_1^\star (s) + Q_2^\star(s))  \\ & = \< F_1, F_2 \>^\star (Q_1^\star(s)) + \< F_1, F_2 \>^\star (Q_2^\star(s)) \\  &= F_1^\star(s) + F_2^\star(s)
\end{align*}

\begin{align*}(F_1 \cdot F_2)^\star(s) & = \< F_1, F_2 \>^\star(\bm \cdot ^\star (s)) = \< F_1, F_2 \>^\star (Q_1^\star (s) \cdot Q_2^\star(s))  \\ & = \< F_1, F_2 \>^\star (Q_1^\star(s)) \cdot \< F_1, F_2 \>^\star (Q_2^\star(s)) \\  &= F_1^\star(s) \cdot F_2^\star(s)
\end{align*}

\begin{align*}
\overline{F_1}^\star(s) = F_1^\star( \overline{\phantom{s}}^\star(s)) = F_1^\star(s^\dagger) = F_1^\star (s)^\dagger
\end{align*}

\begin{align*}
\mathrm{Cst}_\alpha^\star(s) & =  \,!^\star(`\alpha^\star(\< P_1,  P_2\>^\star(Q_1^\star(r) + iQ_2^\star(r)))  \\ &= \, !^\star(`\alpha^\star(P_1^\star(r) + iP_2^\star(r))
\\ & = \,!^\star(`\alpha^\star(P_1^\star(r)) + i `\alpha^\star(P_2^\star(r)))
\\ & = \,!^\star(\mathrm{Re}(\alpha) + i \mathrm{Im}(\alpha))
\\ & = \,!^\star(\alpha) = \alpha\cdot 1
%\qedhere
\end{align*}
\end{proof}

\appendix

\renewcommand{\theequation}{\thesection\arabic{equation}}

\section{Displaced lemmas}

\setcounter{equation}{0}

\begin{lemma}\label{Q3.1}
Let $\X$ and $\Y$ be quantum sets. For each $X \atomof \X$, write $\pi_X\: \ell(\X) \To L(X)$ for the projection map, and for each $Y \atomof \Y$, write $\iota^Y\: L(Y) \To \ell(\Y)$ for the inclusion map.

For each continuous $*$-homomorphism $\phi\: \ell(\Y) \To \ell(\X)$, the family of $*$-homomorphisms
\begin{equation*}
(\phi_X^Y = \pi_X \circ \phi \circ \iota^Y \suchthat X \atomof \X, \,Y \atomof \Y)
\end{equation*}
satisfies $\phi_X^{Y_1}(L(Y_1)) \cdot \phi_X^{Y_2}(L(Y_2)) = 0$ for all distinct $Y_1, Y_2 \atomof \Y$. Conversely, for each family of $*$-homomorphisms $(\phi_X^Y\:L(Y) \To L(X) \suchthat X \atomof \X, \, Y \atomof \Y)$ satisfying $\phi_X^{Y_1}(L(Y_1)) \cdot \phi_X^{Y_2}(L(Y_2)) = 0$ for all distinct $Y_1, Y_2 \atomof \Y$, the equation
\begin{equation*}\phi(b)(X) = \sum_{Y \atomof \Y} \phi_X^Y(b(Y))
\end{equation*}defines a continuous $*$-homomorphism $\phi\:\ell(\Y) \To \ell(\X)$. The two constructions are inverses.

Furthermore, the same proposition is true for normal $*$-homomorphisms $\ell^\infty(\Y) \To \ell^\infty(\X)$, in place of continuous $*$-homomorphisms $\ell(\Y) \To \ell(\X)$. As a consequence, each continuous $*$-homomorphism $\ell(\Y) \To \ell(\X)$ restricts to a normal $*$-homomorphism $\ell^\infty(\Y) \To \ell^\infty(\X)$, and this defines a bijection between the two sets of morphisms.
\end{lemma}

As elsewhere, we use the product topologies on $\ell(\X)$ and $\ell(\Y)$.

\begin{proof}
Let $\phi\: \ell(\Y) \To \ell(\X)$ be any continuous $*$-homomorphism, and let $Y_1, Y_2 \atomof \Y$ be distinct. By definition of the canonical inclusion maps $\iota^{Y_1}$ and $\iota^{Y_2}$, we certainly have that $\iota^{Y_1}(L(Y_1)) \cdot \iota^{Y_2}(L(Y_2)) = 0$. Therefore, the family $(\phi_X^Y = \pi_X \circ \phi \circ \iota^Y \suchthat X \atomof \X, \,Y \atomof \Y)$ satisfies $\phi_X^{Y_1}(L(Y_1)) \cdot \phi_X^{Y_2}(L(Y_2)) = 0$, for all distinct $Y_1, Y_2 \atomof \Y$. Furthermore, for all $b \in \ell(\Y)$ and all $X \atomof \X$,
\begin{equation*}
\sum_{Y \atomof \Y} \phi_X^Y(b(Y)) = \sum_{Y \atomof \Y} \pi_X(\phi(\iota^Y(b(Y)))) = \pi_X\left( \phi \left(\sum_{Y \atomof \Y} \iota^Y(b(Y)) \right) \right) = \pi_X(\phi(b))  =\phi(b)(X).
\end{equation*}
We have used the assumed continuity of $\phi$, and the tautological continuity of $\pi_X$.

Now, let $(\phi_X^Y\:L(Y) \To L(X) \suchthat X \atomof \X, \, Y \atomof \Y)$ be any family of $*$-homomorphisms satisfying $\phi_X^{Y_1}(L(Y_1)) \cdot \phi_X^{Y_2}(L(Y_2)) = 0$, for all distinct $Y_1, Y_2 \atomof \Y$, and fix $X \atomof \X$. Since $X$ is finite-dimensional, $\phi_X^Y(L(Y)) = 0$ for all but finitely many $Y \atomof \Y$. It follows that for all $b \in \ell(\Y)$, the sum is $\sum_{Y \atomof \Y} \phi_X^Y(b(Y))$ is trivially convergent, and furthermore, this expression defines a continuous $*$-homomorphism $\phi_X\: \ell(\Y) \To L(X)$. Therefore, the $*$-homomorphism $\phi\: \ell(\Y) \To \ell(\X)$ defined by $\phi(b)(X) = \phi_X(b)$ is also continuous, by definition of the product topology. Furthermore, for all $X \atomof \X$, $Y \atomof \Y$, and $m \in L(Y)$,
\begin{equation*}
\pi_X(\phi(\iota^Y(m)) = \phi(\iota^Y(m))(X) = \phi_X(\iota^Y(m)) = \sum_{Y' \atomof \Y} \phi_X^{Y'}(\iota^Y(m)(Y')) = \phi_X^Y(m).
\end{equation*}
Therefore, the two constructions are inverses.

The argument works just the same with normal $*$-homomorphisms $\ell^\infty(\Y) \To \ell^\infty(\X)$, in place of continuous $*$-homomorphisms $\ell(\Y) \To \ell(\X)$, with a minor adjustment in the justifications. In the forward direction, we observe that the sum $\sum_{Y \atomof \Y} \iota^Y(b(Y))$ converges to $b$ ultraweakly, and that $\phi$ is ultraweakly continuous because it is normal. In the backward direction, we observe that $\ell^\infty(\Y)$ is the product of the family $(L(Y)\suchthat Y \atomof \Y)$ in the category of von Neumann algebras and normal $*$-homomorphisms.
\end{proof}

\begin{definition}\label{Q6}
For $i \in \{1,2\}$, let $\X_i$ and $\Y_i$ be quantum sets, and let $f_i\: \X_i \To \Y_i$ be a partial fission, with components $(f_i)_{X_i}^{Y_i} \: X_i \To Y_i \tensor (H_i)_{X_i}^{Y_i}$, for all $X_i \atomof \X_i$ and $Y_i \atomof \Y_i$. Up to the obvious permutation of tensor factors, the \emph{tensor product} partial fission $f_1 \tensor f_2$ is defined simply by forming the tensor product in each component. Explicitly,
\begin{equation*}(f_1 \tensor f_2)_{X_1 \tensor X_2}^{Y_1 \tensor Y_2} = (1 \tensor \sigma \tensor 1) \cdot ((f_1)_{X_1}^{Y_1} \tensor (f_2)_{X_2}^{Y_2})\: X_1 \tensor X_2 \To Y_1 \tensor Y_2 \tensor (H_1)_{X_1}^{Y_1} \tensor (H_2)_{X_2}^{Y_2},
\end{equation*}
where $\sigma$ is the braiding of the tensor product.
\end{definition}

\begin{lemma}\label{Q7}
For each $i \in \{1,2\}$, let $\X_i$ and $\Y_i$ be quantum sets, and let $F_i$ be a partial function from $\X_i$ to $\Y_i$, with corresponding partial fission $f_i$, and corresponding homomorphism $\phi_i$. It follows that the partial function $F_1 \times F_2$ has corresponding partial fission $f_1 \tensor f_2$, and corresponding homomorphism $\phi_1 \tensor \phi_2$.
\end{lemma}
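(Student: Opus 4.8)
The plan is to verify directly two of the three compatibility statements—that $F_1 \times F_2$ corresponds to $f_1 \tensor f_2$ under the first construction of theorem \ref{Q3}, and that $f_1 \tensor f_2$ corresponds to $\phi_1 \tensor \phi_2$ under the second—and then to obtain the remaining statement (that $F_1 \times F_2$ corresponds to $\phi_1 \tensor \phi_2$) for free, since the three constructions of theorem \ref{Q3} compose to the identity. This parallels the structure of the proof of proposition \ref{Q5}.

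First I would fix atoms $X_1 \tensor X_2 \atomof \X_1 \times \X_2$ and $Y_1 \tensor Y_2 \atomof \Y_1 \times \Y_2$ and choose orthonormal bases $B_{X_1}^{Y_1}$ of $F_1(X_1,Y_1)$ and $B_{X_2}^{Y_2}$ of $F_2(X_2,Y_2)$ for the inner products of theorem \ref{Q3}. The essential point is that this inner product factorizes over the tensor product: for $r_1, r_1' \in F_1(X_1,Y_1)$ and $r_2, r_2' \in F_2(X_2,Y_2)$ one has $(r_1' \tensor r_2')(r_1 \tensor r_2)^\dagger = (r_1' r_1^\dagger) \tensor (r_2' r_2^\dagger) = (r_1 | r_1')(r_2 | r_2')\cdot 1_{Y_1 \tensor Y_2}$, so that $(r_1 \tensor r_2 \mid r_1' \tensor r_2') = (r_1 | r_1')(r_2 | r_2')$. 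This same factorization, together with the orthogonality conditions satisfied by $F_1$ and $F_2$, confirms that $F_1 \times F_2$ is itself a partial function. Since $(F_1 \times F_2)(X_1 \tensor X_2, Y_1 \tensor Y_2)$ is by definition spanned by the products $r_1 \tensor r_2$, the factorized inner product makes $\{r_1 \tensor r_2 \suchthat r_1 \in B_{X_1}^{Y_1},\, r_2 \in B_{X_2}^{Y_2}\}$ an orthonormal basis of it. Feeding this basis into the formula $f_X^Y(x) = \sum_v v(x) \tensor v^\dagger$ and comparing with definition \ref{Q6}, the two fissions agree once the coefficient space $(F_1 \times F_2)(X_1 \tensor X_2, Y_1 \tensor Y_2)^\dagger$ is identified with $F_1(X_1,Y_1)^\dagger \tensor F_2(X_2,Y_2)^\dagger$ via the unitary $(r_1 \tensor r_2)^\dagger \mapsto r_1^\dagger \tensor r_2^\dagger$; this is exactly the equivalence of fissions of definition \ref{Q2}. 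The braiding $1 \tensor \sigma \tensor 1$ in definition \ref{Q6} is precisely what reorders the output factor $Y_2$ past the coefficient factor $(H_1)_{X_1}^{Y_1}$ of $f_1$.

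For the second statement I would substitute $(f_1 \tensor f_2)_{X_1 \tensor X_2}^{Y_1 \tensor Y_2} = (1 \tensor \sigma \tensor 1)\bigl((f_1)_{X_1}^{Y_1} \tensor (f_2)_{X_2}^{Y_2}\bigr)$ into the homomorphism formula $\phi(b)(X) = \sum_Y (f_X^Y)^\dagger (b(Y) \tensor 1)\, f_X^Y$. Because $L(Y_1 \tensor Y_2) = L(Y_1) \tensor L(Y_2)$, it suffices to evaluate on product elements $b(Y_1 \tensor Y_2) = c_1 \tensor c_2$; conjugating by $1 \tensor \sigma \tensor 1$ separates the tensor factors, so each summand splits as $\bigl[((f_1)_{X_1}^{Y_1})^\dagger (c_1 \tensor 1)(f_1)_{X_1}^{Y_1}\bigr] \tensor \bigl[((f_2)_{X_2}^{Y_2})^\dagger (c_2 \tensor 1)(f_2)_{X_2}^{Y_2}\bigr] = (\phi_1)_{X_1}^{Y_1}(c_1) \tensor (\phi_2)_{X_2}^{Y_2}(c_2)$. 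Summing over $Y_1$ and $Y_2$ and invoking lemma \ref{Q3.1} to reassemble these atomic pieces yields exactly $(\phi_1 \tensor \phi_2)(b)(X_1 \tensor X_2)$, as desired.

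The only real difficulty is bookkeeping: one must track the tensor factors carefully through the braiding $1 \tensor \sigma \tensor 1$, so that the output factors $Y_1 \tensor Y_2$ and the coefficient factors $(H_1)_{X_1}^{Y_1} \tensor (H_2)_{X_2}^{Y_2}$ end up correctly segregated. The conceptual content—that the relevant inner products and the $*$-homomorphisms both factorize over the tensor product—is immediate once the products of basis vectors are recognized as an orthonormal basis.
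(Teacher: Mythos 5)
Your proposal is correct and follows essentially the same route as the paper: verify that $F_1\times F_2$ corresponds to $f_1\tensor f_2$ by checking that products of orthonormal basis elements form an orthonormal basis for the factorized inner product, verify that $f_1\tensor f_2$ corresponds to $\phi_1\tensor\phi_2$ by evaluating on product elements and separating tensor factors through the braiding, and obtain the third correspondence for free since the constructions of theorem \ref{Q3} compose to the identity. The only cosmetic difference is that you reassemble the atomic components via lemma \ref{Q3.1} where the paper instead invokes density of the span of elementary tensors in $\ell(\Y_1\times\Y_2)$; both close the same gap.
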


In this context, the continuous homomorphism $\phi_1 \tensor \phi_2\: \ell(\Y_1\times \Y_2) \To \ell(\X_1 \times \X_2)$ is defined by $(\phi_1 \tensor \phi_2)_{X_1 \tensor X_2}^{Y_1 \tensor Y_2} = (\phi_1)_{X_1}^{Y_1} \tensor (\phi_2)_{X_2}^{Y_2} \:  L(Y_1 \tensor Y_2) \To L(X_1 \tensor X_2)$ (Lemma \ref{Q3.1}). For all $b_1 \in \ell(\Y_1)$ and $b_2 \in \ell(\Y_2)$, writing $b_1 \tensor b_2$ for the element of $\ell(\Y_1 \times \Y_2)$ defined by $(b_1 \tensor b_2)(Y_1 \tensor Y_2) = b_1(Y_1) \tensor b_2(Y_2)$, we find that
\begin{align*}
(\phi_1 \tensor \phi_2)(b_1 \tensor b_2)(X_1 \tensor X_2)
&=
\sum_{Y_1 \tensor Y_2 \atomof \Y_1 \times \Y_2} (\phi_1 \tensor \phi_2)_{X_1 \tensor X_2}^{Y_1 \tensor Y_2}((b_1 \tensor b_2)(Y_1 \tensor Y_2))
\\ &= 
\sum_{Y_1 \atomof \Y_1} \sum_{Y_2 \atomof \Y_2} (\phi_1)_{X_1}^{Y_1}(b_1(Y_1)) \tensor (\phi_2)_{X_2}^{Y_2}(b_2(Y_2)) 
 \\ &=
\phi_1(b_1)(X_1) \tensor \phi_2(b_2)(X_2).
\end{align*}

\begin{proof}[Proof of Lemma \ref{Q7}]
For all $b_1 \in \ell(\Y_1)$ and $b_2 \in \ell(\Y_2)$, and for all $X_1 \atomof \X_1$ and $X_2 \atomof \X_2$,
\begin{align*}
(\phi_1 & \tensor \phi_2) (b_1 \tensor b_2)  (X_1 \tensor X_2)
 =
\phi_1(b_1)(X_1) \tensor \phi_2(b_2)(X_2)
\\ &=
\left(\sum_{Y_1 \atomof \Y_1} (f_1)_{X_1}^{Y_1\, \dagger} (b_1(Y_1) \tensor 1) (f_1)_{X_1}^{Y_1}\right)
\tensor
\left(\sum_{Y_2 \atomof \Y_2} (f_2)_{X_2}^{Y_2\, \dagger} (b_2(Y_2) \tensor 1) (f_2)_{X_2}^{Y_2}\right)
\\ &=
\sum_{Y_1 \atomof \Y_1} \sum_{Y_2 \atomof \Y_2}  ((f_1)_{X_1}^{Y_1} \tensor (f_2)_{X_2}^{Y_2})^\dagger(b_1(Y_1) \tensor 1 \tensor b_2(Y_2) \tensor 1)    ((f_1)_{X_1}^{Y_1} \tensor (f_2)_{X_2}^{Y_2})
\\ &=
\sum_{Y_1 \atomof \Y_1} \sum_{Y_2 \atomof \Y_2}  ((f_1)_{X_1}^{Y_1} \tensor (f_2)_{X_2}^{Y_2})^\dagger (1 \tensor \sigma \tensor 1) ^\dagger \cdot  ((b_1 \tensor b_2)(Y_1 \tensor Y_2) \tensor 1 \tensor 1) \setlength{\jot}{-1ex} \raisebox{-4ex}{\!\!\!\!\!\!\!\!\!\!\!\!\!\!\!\!\!\!\!\!\!\!\!\!\!\!\!\!\!\!\!\!\!\!\!\!\!\!\!\!\!\!\!\!\!\!\!\!\!\!\!\!\!\!$\cdot\, (1 \tensor \sigma \tensor 1) ((f_1)_{X_1}^{Y_1} \tensor (f_2)_{X_2}^{Y_2}).$}
\end{align*}
The span of $\{b_1 \tensor b_2 \suchthat b_1 \in \ell(\Y_1),\, b_2 \in \ell(\Y_2)\}$ is clearly dense in $\ell(\Y_1 \times \Y_2)$, since this algebra is the closure of the span of its factors. Thus, $\phi_1 \tensor \phi_2$ is equal to the homomorphism corresponding to $f_1 \tensor f_2$.

Fix $X_1 \atomof \X_1$, $X_2 \atomof \X_2$, $Y_1 \atomof \Y_1$, and $Y_2 \atomof \Y_2$. Choose orthonormal bases $(B_1)_{X_1}^{Y_1}$ for $(B_2)_{X_2}^{Y_2}$ for $F_1(X_1, Y_1)$ and $F_2(X_2, Y_2)$ respectively. The set $\{v_1 \tensor v_2 \suchthat v_1 \in (B_1)_{X_1}^{Y_1},\, v_2 \in (B_2)_{X_2}^{Y_2} \}$ is an orthonormal basis for $F_1(X_1, Y_1) \tensor F_2(X_2, Y_2)$. For all vectors $x_1 \tensor x_2$ in an arbitrary atom $X_1 \tensor X_2 \atomof \X_1 \times \X_2$,
\begin{align*}
(f_1 \tensor f_2)_{X_1 \tensor X_2}^{Y_1 \tensor Y_2}(x_1 \tensor x_2)
& =
(1 \tensor \sigma \tensor 1) ((f_1)_{X_1}^{Y_1}(x_1) \tensor (f_2)_{X_2}^{Y_2}(x_2))
\\ &=
(1 \tensor \sigma \tensor 1) \sum_{v_1 \in (B_1)_{X_1}^{Y_1}} \sum_{v_2 \in (B_2)_{X_2}^{Y_2}} v_1(x_1) \tensor v_1^\dagger \tensor v_2(x_2) \tensor v_2^\dagger
\\ &=
\sum_{v_1 \in (B_1)_{X_1}^{Y_1}} \sum_{v_2 \in (B_2)_{X_2}^{Y_2}} v_1(x_1) \tensor  v_2(x_2) \tensor v_1^\dagger \tensor v_2^\dagger
\\ &=
\sum_{v_1 \in (B_1)_{X_1}^{Y_1}} \sum_{v_2 \in (B_2)_{X_2}^{Y_2}} (v_1 \tensor v_2)(x_1 \tensor x_2) \tensor (v_1 \tensor v_2)^\dagger.
\end{align*}
Vectors of the form $x_1 \tensor x_2$ span $X_1 \tensor X_2$, so we can vary $X_1$, $X_2$, $Y_1$, and $Y_2$ to conclude that $f_1 \tensor f_2$ is the partial fission corresponding to $F_1 \tensor F_2$.
\end{proof}

\section{Predicates on quantum sets}\label{unary}\label{section 12}

\setcounter{equation}{0}

An ordinary predicate $p$ on an ordinary set $S$ is essentially just a subset of $S$, but its superset $S$ is part of its data. We generalize this notion to the quantum setting.

\begin{definition}\label{qur}
A \emph{predicate} on a quantum set $\X$ is a function that assigns to each atom $X \atomof \X$ a subspace $P(X) \leq X$. 
\end{definition}

If $p$ is an ordinary predicate on an ordinary set $S$, we define the predicate $`p$ on the quantum set $`S$ in the expected way: $`p(\CC_{s})$ is equal to $\CC_{s}$ if $s \in p$, and it vanishes otherwise.

We write $\mathrm{Pred}(\X)$ for the set of predicates on a quantum set $\X$. It is a complete orthomodular lattice, with the partial order and the orthocomplemention defined atomwise (cf. Definition \ref{I}). When $P_1 \leq \neg P_2$, we say that that $P_1$ and $P_2$ are \emph{disjoint}. In this appendix, we extend $\mathrm{Pred}$ to a contravariant functor from the category $\qSet$ of quantum sets and functions to the category of ortholattices and monotone functions. We go on to exhibit a few naturally isomorphic functors.

\begin{definition}\label{image}
Let $R$ be a binary relation from a quantum set $\X$ to a quantum set $\Y$. Define the ordinary function $R_\star$ from $\mathrm{Pred}(\X)$ to $\mathrm{Pred}(\Y)$ by
\begin{equation*}R_\star(P)(Y) = \mathrm{span}\{rx\suchthat X \atomof \X, \, x \in P(X),\, r \in R(X,Y)\}.
\end{equation*}
\end{definition}

The function $R_\star$ is clearly monotone. If $R$ and $S$ are composable binary relations, then $(S \circ R)_\star = S_\star \circ R_\star$, for essentially the same reason that the composition of binary relations is associative. Thus, we have a covariant functor from the category of quantum sets and binary relations to the category of partially ordered sets and monotone functions. We call this functor the \emph{direct image functor}.

We define the \emph{inverse image functor} by composing the direct image functor with the adjoint functor, writing $R^\star = (R^\dagger)_\star$. This is a contravariant functor from the category of quantum sets and binary relations to the category of partially ordered sets and monotone functions. The functor $\mathrm{Pred}$ mentioned at the beginning of this section is just the restriction of this inverse image functor to $\qSet$. Overloading notation, we will also write $\mathrm{Pred}$ for the inverse image functor itself.

\begin{lemma}\label{PredRel}
The inverse image functor $\mathrm{Pred}(\,-\,)$ from the category $\qRel$ of quantum sets and binary relations to the category of ortholattices and monotone functions is naturally isomorphic to the functor $\mathrm{Rel}(\,-\,; \mathbf 1)$. The natural isomorphism takes each predicate $P$ on a quantum set $\X$ to the binary relation $R$ defined by $R(X, \CC) = \{ \< x | \, \cdot \, \> \suchthat x \in P(X)\}$.
\end{lemma}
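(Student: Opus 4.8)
The plan is to exhibit the natural isomorphism componentwise and then check naturality against an arbitrary binary relation. For a quantum set $\X$, define $\Phi_\X \colon \mathrm{Pred}(\X) \to \mathrm{Rel}(\X; \mathbf 1)$ by the prescription in the statement: $\Phi_\X(P)(X, \CC) = \{\langle x | \cdot \rangle : x \in P(X)\}$ for each atom $X \atomof \X$, where $\langle x | \cdot\rangle \in L(X, \CC) = X^*$.

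First I would record the elementary fact that the Riesz map $\kappa_X \colon X \to X^*$, $x \mapsto \langle x | \cdot\rangle$, is a bijective antilinear isometry. Consequently $\kappa_X$ carries the subspace lattice of $X$ bijectively onto the subspace lattice of $X^* = L(X, \CC)$, and because it is antilinear it nonetheless sends complex subspaces to complex subspaces (scalars are merely conjugated), preserves inclusion in both directions, and, being isometric, preserves orthogonality and hence orthocomplements. Since the ortholattice structures on both $\mathrm{Pred}(\X)$ and $\mathrm{Rel}(\X; \mathbf 1)$ are defined atomwise (definitions \ref{qur} and \ref{I}), assembling $\kappa_X$ over all atoms $X \atomof \X$ shows that $\Phi_\X$ is an order isomorphism --- in fact an isomorphism of complete ortholattices. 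This settles the object part of the claimed natural isomorphism.

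Next I would verify naturality. Let $G$ be a binary relation from $\X$ to $\Y$; the functor $\mathrm{Rel}(\,-\,;\mathbf 1)$ sends $G$ to precomposition $S \mapsto S \circ G$, while the inverse image functor sends $G$ to $G^\star = (G^\dagger)_\star$, so I must show $\Phi_\X \circ G^\star = (\,-\,\circ G) \circ \Phi_\Y$. Fix a predicate $Q$ on $\Y$ and an atom $X \atomof \X$, and set $S = \Phi_\Y(Q)$. Expanding definitions \ref{E} and \ref{G}, a spanning element of $(S \circ G)(X, \CC)$ has the form $s\,g$ with $g \in G(X, Y)$ and $s = \langle y | \cdot\rangle \in S(Y, \CC)$, $y \in Q(Y)$. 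The single identity that drives the whole computation is
$$ (s\,g)(x) = \langle y | g x\rangle = \langle g^\dagger y | x\rangle, \qquad \text{i.e.} \qquad s\,g = \langle g^\dagger y | \cdot\rangle = \kappa_X(g^\dagger y). $$
Since $G^\star(Q)(X) = (G^\dagger)_\star(Q)(X)$ is exactly the span of the vectors $g^\dagger y$ (unwinding definition \ref{image} together with $G^\dagger(Y,X) = G(X,Y)^\dagger$), and since $\kappa_X$ carries spans to spans up to harmless conjugation of coefficients, the two subspaces $(S \circ G)(X, \CC)$ and $\Phi_\X(G^\star(Q))(X,\CC) = \kappa_X(G^\star(Q)(X))$ coincide. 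Letting $X$ range over $\At(\X)$ produces the commuting naturality square.

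The bulk of the argument is routine bookkeeping; the one genuinely load-bearing computation is the adjunction identity $\langle y | g x\rangle = \langle g^\dagger y | x\rangle$, which is precisely what converts right-composition by an operator $g$ into left-application of its Hermitian adjoint $g^\dagger$, thereby matching composition in $\qRel$ to the direct-image action of $G^\dagger$. The main point to handle with care is the antilinearity of $\kappa_X$: I must confirm that it still respects all the relevant lattice operations and that the conjugation of scalars never escapes the spans, so that both the object isomorphism and the naturality square go through without a spurious complex conjugate.
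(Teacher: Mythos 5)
Your proposal is correct and follows exactly the route of the paper's (much terser) proof: the componentwise isomorphism is the canonical antiunitary $x \mapsto \< x | \,\cdot\,\>$ from each atom to its dual, and naturality reduces to the single adjunction identity $\< a^\dagger y | \,\cdot\,\> = \< y | a\,\cdot\,\>$, which is precisely the identity the paper cites. Your expanded bookkeeping --- checking that the antilinear Riesz map still carries complex subspaces to complex subspaces and spans to spans, and that the lattice operations are preserved atomwise --- is exactly what the paper compresses into ``the reasoning lifts to binary relations, as usual.''
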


\begin{proof}
On the level of Hilbert spaces, this natural isomorphism is just the canonical antiunitary from a Hilbert space $X$ to its dual $X^\ast = L( X, \CC)$; it is natural because $\< a^\dagger y | \, \cdot \,\> = \< y | a \cdot \>$ for all $y \in Y$, for each $a \in L(X, Y)$. The reasoning lifts to binary relations, as usual.
\end{proof}

Any binary relation $R$ from a quantum set $\X$ to $\mathbf 1$ is a partial function, since the condition $R \circ R^\dagger \leq {I_\mathbf 1}$ is satisfied automatically. Thus, we can equivalently say that $\mathrm{Pred}(\,-\,)$ is naturally isomorphic to $\mathrm{Par}(\,-\,; \mathbf 1)$.

Under the contravariant equivalence between $\qPar$ and $\Mstar0$, partial functions from $\X$ to $\mathbf 1$ correspond to normal $*$-homomorphisms from $\CC$ to $\ell^\infty(\X)$. A normal $*$-homomorphism $\phi$ from $\CC$ to any von Neumann algebra $A$ is uniquely determined by the projection $\phi(1)$, so we have a bijective correspondence between the predicates on $\X$ and the projections in $\ell^\infty(\X)$. To state this correspondence as a natural isomorphism of functors, we write $\mathrm{Proj}(\,-\,)$ for the functor that takes each von Neumann algebra $A$ to its ortholattice of projections and that simply restricts each normal $*$-homomorphism out of $A$ to this set.

\begin{lemma}\label{RelProj}
The functors $\mathrm{Par}(\,-\,; \mathbf 1)$ and $\mathrm{Proj}(\ell^\infty(\,-\,))$ are naturally isomorphic as contravariant functors from the category of quantum sets and partial functions to the category of ortholattices and monotone functions. For any quantum set $\X$, the natural isomorphism takes each partial function $R$ in $\mathrm{Par}(\X; \mathbf 1)$ to $R^\star(1)$, where $R^\star$ is the normal $*$-homomorphism corresponding to $R$.
\end{lemma}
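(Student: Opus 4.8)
The plan is to exhibit the claimed isomorphism as the composite of two natural isomorphisms, deferring all of the order-theoretic bookkeeping to the end. First I would invoke the contravariant equivalence of theorem \ref{Q10}: since it is full and faithful and sends $\mathbf 1$ to $\ell^\infty(\mathbf 1) = \CC$, the assignment $R \mapsto R^\star$ is a bijection $\mathrm{Par}(\X; \mathbf 1) \To \Hom_{\Mstar0}(\CC, \ell^\infty(\X))$, natural in $\X$ by proposition \ref{Q5}. Then I would use the elementary fact that a normal $*$-homomorphism $\phi\: \CC \To A$ into any von Neumann algebra is determined by the single projection $\phi(1)$: indeed $\phi(1)^\dagger = \phi(1) = \phi(1)^2$ forces $\phi(1)$ to be a projection, $\phi(\lambda) = \lambda\,\phi(1)$ recovers $\phi$, and every projection $p$ arises from $\lambda \mapsto \lambda p$. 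This yields a bijection $\Hom_{\Mstar0}(\CC, A) \To \mathrm{Proj}(A)$, $\phi \mapsto \phi(1)$. Composing the two bijections sends $R$ to $R^\star(1)$, as required.

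Naturality of the whole correspondence is then immediate. For the second bijection, if $\psi\: A \To B$ is a normal $*$-homomorphism then $(\psi \circ \phi)(1) = \psi(\phi(1))$, and since $\mathrm{Proj}(\psi)$ is by definition the restriction of $\psi$ to projections, the correspondence $\phi \mapsto \phi(1)$ is natural in $A \in \Mstar0$. Precomposing with the contravariant functor $\ell^\infty(\,-\,)$ and using $(R \circ F)^\star = F^\star \circ R^\star$ gives, for any partial function $F\: \X \To \Y$ and any $R \in \mathrm{Par}(\Y; \mathbf 1)$, the identity $(R \circ F)^\star(1) = F^\star(R^\star(1))$, which is exactly the commutativity of the naturality square. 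Here I would remark that $F^\star$ carries projections to projections even though it need not be unital, since it preserves $\dagger$ and products.

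The one point that genuinely needs checking is that each component $R \mapsto R^\star(1)$ is an isomorphism of \emph{ortholattices}, not merely a bijection of sets, since the target category has monotone maps as morphisms; I expect this to be the main obstacle. I would dispatch it by making $R^\star(1)$ explicit. Specializing proposition \ref{Q12} to $\Y = \mathbf 1$ and $b = 1$ (so $\dim(\CC) = 1$), one computes $R^\star(1)(X) = \sum_{w \in B_X} w^\dagger w$ for an orthonormal basis $B_X$ of $R(X, \CC) \leq L(X, \CC) = X^*$; this is precisely the orthogonal projection of $X$ onto the subspace $\{w^\dagger(1) \suchthat w \in R(X, \CC)\}$. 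Thus $R \mapsto R^\star(1)$ factors as the predicate assigned to $R$ by lemma \ref{PredRel}, followed by the atomwise map sending a subspace to the orthogonal projection onto it; containment $R_1(X,\CC) \leq R_2(X,\CC)$ corresponds to $R_1^\star(1)(X) \leq R_2^\star(1)(X)$ atom by atom, so both maps, and hence their composite, are order isomorphisms. Alternatively, one may simply note that lemma \ref{PredRel} already identifies $\mathrm{Par}(\,-\,;\mathbf 1)$ with $\mathrm{Pred}(\,-\,)$ as ortholattice-valued functors, reducing the claim to the evident natural isomorphism $\mathrm{Pred}(\X) \iso \mathrm{Proj}(\ell^\infty(\X))$ given atomwise by projection onto a subspace.
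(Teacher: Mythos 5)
Your proposal is correct and follows essentially the same route as the paper's: both arguments rest on the correspondence of theorem \ref{Q3}/theorem \ref{Q10} for bijectivity and naturality, and on proposition \ref{Q12} for the order-theoretic content. The only cosmetic difference is that the paper obtains monotonicity of the inverse from the intertwiner formula $R(X, \CC) = \{ v \suchthat v = v R^\star(1)(X)\}$ of theorem \ref{Q3}, whereas you identify $R^\star(1)(X)$ explicitly as the orthogonal projection onto the subspace attached to $R$ by lemma \ref{PredRel}, which settles both directions of monotonicity at once.
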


\begin{proof}
It is immediate from Proposition \ref{Q12} that the assignment $R \mapsto R^\star(1)$ is monotone. Its inverse is also monotone, as we can see from the expression for $R$ in Theorem \ref{Q3} as the intertwining space of $R^\star$:
\begin{align*}R(X, \CC)   & = \{ v \in L(X, \CC) \suchthat \alpha v = v R^\star(\alpha)(X)\text{ for all $\alpha \in \CC$}\} \\ & =  \{ v \in L(X, \CC) \suchthat v = v R^\star(1)(X)\}
\end{align*}
Finally, the functoriality of $(\,-\,)^\star$ implies immediately that we have a natural transformation: for any partial function $F$ from a quantum set $X$ to a quantum set $\Y$, and any partial function $R$ from $\Y$ to $\mathbf 1$, we have $F^\star(R^\star(1)) = (R \circ F)^\star(1)$.
\end{proof}

As a functor to the category $\mathbf{Set}$ of sets and functions, $\mathrm{Par}(\,-\,;\mathbf 1)$ is naturally isomorphic to $\mathrm{Fun}(\,-\,; \mathbf 1 \uplus \mathbf 1)$; this is apparent in $\Mstar0$. In general, the morphism set $\mathrm{Fun}(\X; \Y)$, for arbitrary quantum sets $\X$ and $\Y$, has no canonical order structure, so the components of this natural isomorphism are bijections, rather than order isomorphisms. However, these bijections do become order isomorphisms if each morphism set $\mathrm{Fun}(\X; \mathbf 1 \uplus \mathbf 1)$ is given a partial order structure from $\mathbf 1 \uplus \mathbf 1$, in the manner of Definition \ref{sa}.

\begin{definition}
Write $\BB = \{0,1\}$. Define $t$ to be the element of $\ell(`\BB)$ satisfying $t(\CC_{\alpha}) = \alpha$ for $\alpha \in \BB$. Define
\begin{enumerate}
\item a function $\vee\: `\BB \ast `\BB \To `\BB$ by $\vee^\star(t) = Q_1^\star(t) \vee Q_2^\star(t)$,
\item a function $\wedge\: `\BB \ast `\BB \To `\BB$ by $\wedge^\star(t) = Q_1^\star(t) \wedge Q_2^\star(t)$, and
\item a function $\comp\: `\BB \To `\BB$ by $\comp^\star(t) = 1 - t$.
\end{enumerate}
\end{definition}

\begin{definition}
Let $\X$ be a quantum set, and let $F_1$ and $F_2$ be functions from $\X$ to $`\BB$. Define
\begin{enumerate}
\item $F_1 \vee F_2 = \vee \circ\< F_1 , F_2 \>$,
\item $F_1 \wedge F_2 = \wedge \circ \< F_1 , F_2 \>$,
 and
\item $\comp F_1 = \comp \circ F_1$.
\end{enumerate}
Define $F_1 \leq F_2$ just in case $F_1 \vee F_2 = F_2$.
\end{definition}

\begin{lemma}\label{FunProj}
The functors $\mathrm{Fun}(\,-\,; `\BB)$ and $\mathrm{Proj}(\ell^\infty(\,-\,))$ are naturally isomorphic as contravariant functors from the category of quantum sets and functions to the category of partially ordered sets and monotone functions. For any quantum set $\X$, the natural isomorphism takes each function $F$ in $\mathrm{Fun}(\X;` \BB)$ to $F^\star(t)$.
\end{lemma}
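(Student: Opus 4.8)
The plan is to verify directly that $F \mapsto F^\star(t)$ is a bijection from $\mathrm{Fun}(\X; `\BB)$ onto $\mathrm{Proj}(\ell^\infty(\X))$, natural in $\X$, and then to upgrade it to an order isomorphism by checking that it carries the join $\vee$ on $\mathrm{Fun}(\X; `\BB)$ to the join of projections. First I would note that $t$ is itself a projection in $\ell^\infty(`\BB)$: since $\At(`\BB) = \{\CC_0, \CC_1\}$ consists of one-dimensional atoms, $\ell^\infty(`\BB) = \CC^2$, and $t = (0,1)$ is the minimal projection $e_1$, with $1 - t = e_0$. By theorem \ref{Q10}, $F \mapsto F^\star$ is a bijection between $\mathrm{Fun}(\X; `\BB)$ and the unital normal $*$-homomorphisms $\CC^2 \To \ell^\infty(\X)$. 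Such a homomorphism is completely determined by the single projection $F^\star(t)$, since $F^\star(1-t) = 1 - F^\star(t)$; and conversely every projection $p \in \ell^\infty(\X)$ is the image of $t$ under the homomorphism sending $t \mapsto p$ and $1 - t \mapsto 1 - p$. Hence $F \mapsto F^\star(t)$ is a bijection onto $\mathrm{Proj}(\ell^\infty(\X))$. Naturality is immediate from the functoriality of $(\,-\,)^\star$: for any function $G\: \X \To \Y$ we have $(F \circ G)^\star(t) = G^\star(F^\star(t))$, which is precisely the action of $\mathrm{Proj}(\ell^\infty(G))$ on $F^\star(t)$, exactly as in the last line of the proof of lemma \ref{RelProj}.

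It remains to show that this bijection is an order isomorphism, where the domain carries the order $F_1 \leq F_2 \Leftrightarrow F_1 \vee F_2 = F_2$ and the codomain carries the projection order. I would compute $(F_1 \vee F_2)^\star(t)$ in the manner of theorem \ref{opalgebra}. Unwinding $F_1 \vee F_2 = \vee \circ \<F_1, F_2\>$ and $\vee^\star(t) = Q_1^\star(t) \vee Q_2^\star(t)$, and using $Q_i \circ \<F_1, F_2\> = F_i$, one obtains
$$(F_1 \vee F_2)^\star(t) = \<F_1,F_2\>^\star(Q_1^\star(t) \vee Q_2^\star(t)) = \<F_1,F_2\>^\star(Q_1^\star(t)) \vee \<F_1,F_2\>^\star(Q_2^\star(t)) = F_1^\star(t) \vee F_2^\star(t),$$
where the middle equality uses that the unital normal $*$-homomorphism $\<F_1,F_2\>^\star$ preserves the join of the two projections $Q_1^\star(t)$ and $Q_2^\star(t)$. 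Since the order is recovered from $\vee$ on both sides via $a \leq b \Leftrightarrow a \vee b = b$, and the bijection intertwines the two joins, it follows that $F_1 \leq F_2 \Leftrightarrow F_1^\star(t) \leq F_2^\star(t)$. In particular the relation $\leq$ on $\mathrm{Fun}(\X; `\BB)$ is a genuine partial order, pulled back from the projection lattice, and the bijection is an order isomorphism.

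The one nontrivial ingredient is the middle equality above: that a unital normal $*$-homomorphism $\phi$ preserves the join of two projections. This is where the real work sits, since the rest is either bookkeeping or a direct transcription of the computations already performed for $+$ and $\cdot$ in theorem \ref{opalgebra}. I would justify it by the standard fact that for projections $p, q$ the sequence $\bigl((1-p)(1-q)(1-p)\bigr)^n$ decreases ultraweakly to $(1-p) \wedge (1-q)$; applying $\phi$, which is multiplicative and normal (hence continuous along bounded decreasing nets), gives $\phi\bigl((1-p)\wedge(1-q)\bigr) = (1-\phi(p)) \wedge (1-\phi(q))$, and taking orthocomplements yields $\phi(p \vee q) = \phi(p) \vee \phi(q)$. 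As an alternative route, bijectivity and naturality (though not the order statement) could instead be deduced by composing the isomorphism $`\BB \iso \mathbf 1 \uplus \mathbf 1$ with the remark that $\mathrm{Fun}(\,-\,; \mathbf 1 \uplus \mathbf 1) \iso \mathrm{Par}(\,-\,; \mathbf 1)$ and lemma \ref{RelProj}, under which $t$ pulls back along the inclusion of the second summand to the unit $1 \in \ell^\infty(\mathbf 1)$.
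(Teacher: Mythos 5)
Your proposal is correct and follows essentially the same route as the paper: bijectivity and naturality via the contravariant equivalence of theorem \ref{Q10}, and order-preservation via the computation $(F_1 \vee F_2)^\star(t) = F_1^\star(t) \vee F_2^\star(t)$, justified by the fact that a unital normal $*$-homomorphism preserves meets and joins of projections (the paper uses the ultraweak limit of $(pq)^n$, you use the equivalent $((1-p)(1-q)(1-p))^n$). The only cosmetic difference is that the paper also verifies $\wedge$-preservation explicitly, whereas you correctly observe that $\vee$-preservation alone already recovers the order on both sides.
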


\begin{proof}
For any quantum set $\X$, the assignment $F \mapsto F^\star(t)$ is a bijection from $\mathrm{Fun}(\X; `\BB)$ to $\mathrm{Proj}(\ell^\infty(\X))$ thanks to the contravariant equivalence of $\qSet$ and $\Mstar0$ described in Theorem \ref{Q10}. This assignment is immediately seen to be natural in $\X$, just as in the proof of Proposition \ref{RelProj}. To show that this assignment is an order isomorphism, we demonstrate that it preserves $\vee$ and $\wedge$, just as in the proof of Theorem \ref{opalgebra}:

\begin{align*}(F_1 \vee F_2)^\star(t)  &= \< F_1, F_2 \>^\star ( \vee^\star(t)) = \< F_1, F_2 \>^\star ( Q_1^\star(t) \vee Q_2^\star(t)) \\ & = 
\< F_1, F_2 \>^\star ( Q_1^\star(t)) \vee \< F_1, F_2 \>^\star ( Q_2^\star(t))
\\ & = F_1^\star(t) \vee F_2^\star(t)
\end{align*}

\begin{align*}(F_1 \wedge F_2)^\star(t)  &= \< F_1, F_2 \>^\star ( \wedge^\star(t)) = \< F_1, F_2 \>^\star ( Q_1^\star(t) \wedge Q_2^\star(t)) \\ & = 
\< F_1, F_2 \>^\star ( Q_1^\star(t)) \wedge \< F_1, F_2 \>^\star ( Q_2^\star(t))
\\ & = F_1^\star(t) \wedge F_2^\star(t)
\end{align*}

Any unital normal $*$-homomorphism preserves the meets and joins of projections, because in any von Neumann algebra the meet of two projections $p$ and $q$ is equal to the ultraweak limit of the sequence $((pq)^n \suchthat n \in \NN)$.
\end{proof}

In fact, for each quantum set $\X$, the partial order $\mathrm{Fun}(\X ; ` \BB)$ is a complete orthomodular lattice with orthocomplementation $F \mapsto \comp F$, because it is order isomorphic to the complete orthomodular lattice $\mathrm{Proj}(\ell^\infty(\X))$, and the order isomorphism takes $\comp$ to the orthocomplementation:
\begin{equation*}(\comp F)^\star(t) = F^\star(\comp^\star(t)) = F^\star(1-t) = 1 - F^\star(t)
\end{equation*}

\begin{theorem}\label{4functors}
The functors $\mathrm{Pred}(\,-\,)$, $\mathrm{Rel}(\,-\,; \mathbf 1)$, $\mathrm{Proj}(\ell^\infty(\,-\,))$, and $\mathrm{Fun}(\, -\,; `\BB)$ are naturally isomorphic as contravariant functors from the category of quantum sets and functions to the category of ortholattices and ortholattice morphisms.
\end{theorem}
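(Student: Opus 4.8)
The plan is to assemble the three natural isomorphisms already in hand---Lemma \ref{PredRel}, Lemma \ref{RelProj}, and Lemma \ref{FunProj}---into a single chain after restricting every functor to $\qSet$, and then to upgrade the conclusion from the level of monotone maps to the level of ortholattice morphisms. Concretely, restricting Lemma \ref{PredRel} along the inclusion $\qSet \hookrightarrow \qRel$ gives $\mathrm{Pred}(\,-\,) \cong \mathrm{Rel}(\,-\,;\mathbf 1)$; the observation recorded after Lemma \ref{PredRel}, that every binary relation into $\mathbf 1$ is automatically a partial function, identifies $\mathrm{Rel}(\,-\,;\mathbf 1)$ with $\mathrm{Par}(\,-\,;\mathbf 1)$ on $\qSet$; Lemma \ref{RelProj} then gives $\mathrm{Par}(\,-\,;\mathbf 1) \cong \mathrm{Proj}(\ell^\infty(\,-\,))$; and Lemma \ref{FunProj} gives $\mathrm{Proj}(\ell^\infty(\,-\,)) \cong \mathrm{Fun}(\,-\,;`\BB)$. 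Composing, all four functors are naturally isomorphic as contravariant functors into posets and monotone maps, which is the skeleton of the theorem.

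Next I would upgrade the objects and the isomorphism components. Each object value is a complete orthomodular lattice---by definition \ref{I} for $\mathrm{Rel}(\X;\mathbf 1)$, and by the discussion preceding this theorem for $\mathrm{Pred}(\X)$, $\mathrm{Proj}(\ell^\infty(\X))$, and $\mathrm{Fun}(\X;`\BB)$---hence in particular an ortholattice. Each component of each natural isomorphism is a monotone bijection with monotone inverse, i.e.\ an order isomorphism, and therefore automatically preserves $\vee$ and $\wedge$; so only orthocomplementation needs checking. For \ref{PredRel}, the component is induced atomwise by the canonical antiunitary $X \to X^*$, which preserves orthogonality and so carries $P(X)^\perp$ to the corresponding orthogonal complement, preserving $\neg$. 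For \ref{FunProj}, the identity $(\comp F)^\star(t) = 1 - F^\star(t)$ recorded just after its proof shows $\neg$ is preserved. For \ref{RelProj}, I would trace a predicate $P$ through to the projection whose $X$-component is the orthogonal projection onto $P(X)$; orthocomplementation of predicates is the orthogonal complement of subspaces, which corresponds to $1 - p$, so $\neg$ is preserved there as well. Thus every component is an ortholattice isomorphism.

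Finally I would verify that the morphism action is an ortholattice morphism, most cleanly in the $\mathrm{Proj}(\ell^\infty(\,-\,))$ picture: for a function $F\:\X \To \Y$ the action is the restriction to projections of $F^\star\: \ell^\infty(\Y) \To \ell^\infty(\X)$. Because $F$ is a function, $F^\star$ is a \emph{unital} normal $*$-homomorphism, so it preserves $0$ and $1$, preserves $\vee$ and $\wedge$ of projections (as noted in the proof of \ref{FunProj}), and preserves $\neg$ since $F^\star(1-p) = 1 - F^\star(p)$; hence it is an ortholattice morphism. Transporting along the ortholattice isomorphisms of the previous paragraph---writing each functor's action in the contravariant naturality square as $\alpha_\X^{-1} \circ \mathrm{Proj}(\ell^\infty(F)) \circ \alpha_\Y$, a composite of ortholattice morphisms---shows that all four functors take values in ortholattice morphisms and are naturally isomorphic in the category of ortholattices and ortholattice morphisms.

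The main obstacle is, I expect, twofold and both points are structural rather than computational: first, confirming that orthocomplementation is preserved by the isomorphism of \ref{RelProj}, since that lemma as stated asserts only monotonicity of the components; and second, pinning down that \emph{unitality}---available precisely because we work in $\qSet$ rather than $\qPar$---is what forces the morphism action to preserve $\neg$ and the top element, and hence to be a genuine ortholattice morphism rather than merely a morphism of the underlying join-semilattices.
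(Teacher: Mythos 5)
Your proposal is correct and follows essentially the same route as the paper: chain lemmas \ref{PredRel}, \ref{RelProj}, and \ref{FunProj}, note that order isomorphisms automatically preserve meets, joins, and the bounds so that only orthocomplementation needs a componentwise check, and then verify the morphism action is an ortholattice morphism in one picture and transport it to the other three along the component isomorphisms. The only cosmetic difference is that the paper performs that last verification in the $\mathrm{Fun}(\,-\,;`\BB)$ picture via $(\comp F)\circ R = \comp(F\circ R)$, whereas you do it in the $\mathrm{Proj}(\ell^\infty(\,-\,))$ picture via unitality of $F^\star$; both work.
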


For us, an \emph{ortholattice morphism} from one ortholattice to another is a monotone function that preserves meets, joins, and orthocomplements, as well as the top and bottom elements.

\begin{proof}
By Lemmas \ref{PredRel}, \ref{RelProj}, and \ref{FunProj}, the four given functors are naturally isomorphic as functors into the category of ortholattices and monotone functions. The components of the three given natural isomorphisms are easily seen to be ortholattice isomorphisms: They preserve meets and joins, as well as the top and bottom elements simply by virtue of being order isomorphisms. Each component of the natural isomorphism in Lemma \ref{PredRel} preserves orthocomplementation because the map $x \mapsto \< x | \, \cdot\, \>$ is antiunitary. Each component of the natural isomorphism in Lemma \ref{RelProj} preserves orthocomplementation because the subspaces $\{ v \in L(X, \CC) \suchthat v = v p\}$ and $\{ v \in L(X, \CC) \suchthat v = v (1-p)\}$ are orthocomplements for every Hilbert space $X$, and every projection $p$ in $L(X)$. We have already observed that each component of the natural isomorphism in Lemma \ref{FunProj} preserves orthocomplementation.

Let $R$ be a function from a quantum set $\X$ to a quantum set $\Y$, and let $F$ be any element of $\mathrm{Fun}(\Y; `\BB)$. Immediately, $(\comp F) \circ R = \comp \circ F \circ R = \comp (F \circ R)$. Therefore, $\mathrm{Fun}(\, - \,; `\BB)$ is a functor into the category of ortholattices and ortholattice morphisms. Bootstrapping along our three natural isomorphisms, we conclude that the other three functors are also into the category of ortholattices and ortholattice morphisms.
\end{proof}

\begin{figure}
$$ 
\begin{tikzcd}
\mathrm{Pred}(\X) \arrow{r}{\iso}[swap]{\ref{PredRel}}
&
\mathrm{Rel}(\X; \mathbf 1) \arrow{dl}{\ref{RelProj}}[swap]{\iso}
\\
\mathrm{Proj}(\ell^\infty(\X))
&
\mathrm{Fun}(\X; `\BB) \arrow{l}{\ref{FunProj}}[swap]{\iso}
\end{tikzcd}
$$

$$
\begin{tikzcd}
P \arrow[mapsto]{r} \arrow[mapsto]{rd} \arrow[mapsto]{d}
&
R(X, \CC) = P(X)^{0 \perp}
\\
p(X) = \mathrm{proj}_{P(X)}
&
F(X, \CC_{0}) = P(X)^0 
\end{tikzcd}
\hspace{0.7in}
\begin{tikzcd}
P(X) = R(X, \CC)^{0\perp}
&
R \arrow[mapsto]{l} \arrow[mapsto]{ld} \arrow[mapsto]{d}
\\
p = R^\star(1)
&
{[R^\dagger, \neg R^{\dagger}]^\dagger}
\end{tikzcd}
$$

\vspace{0.2in}

$$
\begin{tikzcd}
P(X) = p(X)\cdot X
&
R(X, \CC) = \{v \suchthat vp(X) = v \}
\\
p \arrow[mapsto]{u} \arrow[mapsto]{ur} \arrow[mapsto]{r}
&
F(X, \CC_{0}) = \{v \suchthat vp(X) = 0  \}
\end{tikzcd}
\hspace{0.15in}
\begin{tikzcd}
P(X) = F(X, \CC_{0})^0
&
J_1^\dagger \circ F
\\
p = F^\star(t)
&
F \arrow[mapsto]{l} \arrow[mapsto]{lu} \arrow[mapsto]{u}
\end{tikzcd}
$$
\caption{The isomorphisms of Theorem \ref{4functors}.}
\end{figure}

For reference, we describe all twelve natural isomorphisms of Theorem \ref{4functors} in Figure 1. To simplify expressions, we suppress the canonical isomorphisms $`\BB \iso \mathbf 1 \uplus \mathbf 1$, $\CC_{1} \iso \CC$, and $\CC_{0} \iso \CC$. We use the notation $(\, \cdot \,)^0$ for the polar of a subspace: For each $X \atomof \X$, the Hilbert space $X^*= L(X, \CC)$ is the Hilbert space dual of $X$. For $H \leq X$ and $K \leq X^*$ we write:
\begin{align*}H^0 = \{ v \in X^*\suchthat v(x) = 0 \text{ for all } x \in H \} \hspace{0.3in} K^0 = \{x \in X \suchthat v(x)= 0 \text{ for all } v \in K\}
\end{align*}
\begin{align*}
H^\perp = \{ x \in X \suchthat \< x'| x \> = 0 \text{ for all } x' \in H  \}
\hspace{0.2in}
K^\perp = \{ v \in X^* \suchthat v v'^\dagger = 0 \text{ for all } v' \in K \}
\end{align*}
For any function $F$ from $\X$ to $`\BB$, and each $X \atomof \X$, we have $F(X, \CC_{1}) = F(X, \CC_{0})^\perp$, since under the identifications $\CC_{1} \iso \CC$ and $\CC_{0} \iso \CC$, inner products from the subspaces $F(X, \CC_{1})$ and $F(X, \CC_{0})$ of $L(X, \CC)$ vanish, while outer products contain the identity $1_{L(X, \CC)}$. The notation $[R_1, R_2]$ refers to the universal property of the disjoint union as the coproduct of $\qRel$, and $J_1$ is the inclusion of the first summand $`\{1\} = \Q\{\CC_{1}\}$ into $`\BB$.

\section{The corange of a partial function}\label{section 13}

\setcounter{equation}{0}

\begin{definition}
Let $G$ be a partial function from a quantum set $\X$ to a quantum set $\Y$. The \emph{corange} of $G$ is the predicate $G^\star(T_\Y)$ on $\X$, where $T_\Y$ is the maximum predicate on $\Y$.
\end{definition}

Intuitively, the corange of $G$ is its domain of definition, but I prefer to reserve the term ``domain'' for the source object $\X$ of $G$. Following the natural isomorphisms of Propositions \ref{PredRel} and \ref{RelProj}, the corange of $G$ corresponds to the projection $G^\star(1)$, as both $T_\Y \in \mathrm{Pred}(\Y)$ and $1 \in \mathrm{Proj}(\ell^\infty(\Y))$ are the top elements of these two partial orders. Thus, $G$ is a function if and only if its corange is $T_\X$ (Theorem \ref{Q10}).

\begin{proposition}
Let $G\: \X \To \Y$ be a partial function, and let $F\: \Y \To \Z$ be a function. Then, the corange of $F \circ G$ is equal to the corange of $G$.
\end{proposition}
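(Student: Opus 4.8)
The plan is to reduce the claim to the contravariant functoriality of the inverse image functor $\mathrm{Pred}(\,-\,)$ together with the characterization of functions by their corange. By definition the corange of $F \circ G$ is the predicate $(F \circ G)^\star(T_\Z)$ on $\X$, while the corange of $G$ is $G^\star(T_\Y)$, so it suffices to show that these two predicates coincide. A preliminary point to record is that $F \circ G$ is again a partial function, so that its corange is even defined: since $G \circ G^\dagger \leq I_\Y$ and $F \circ F^\dagger \leq I_\Z$, we have $(F \circ G) \circ (F \circ G)^\dagger = F \circ G \circ G^\dagger \circ F^\dagger \leq F \circ I_\Y \circ F^\dagger = F \circ F^\dagger \leq I_\Z$.

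The key identity is the contravariance of $(\,-\,)^\star$. From the definition $R^\star = (R^\dagger)_\star$ and the covariant functoriality $(S \circ R)_\star = S_\star \circ R_\star$ of the direct image functor, I would compute $(F \circ G)^\star = ((F \circ G)^\dagger)_\star = (G^\dagger \circ F^\dagger)_\star = (G^\dagger)_\star \circ (F^\dagger)_\star = G^\star \circ F^\star$. Evaluating at the top predicate gives $(F \circ G)^\star(T_\Z) = G^\star(F^\star(T_\Z))$, where the inner term $F^\star(T_\Z)$ is exactly the corange of $F$. Because $F$ is a function, its corange equals the maximum predicate, i.e. $F^\star(T_\Z) = T_\Y$ (the characterization that a partial function is a function precisely when its corange is the top predicate, noted just before the statement and following from theorem \ref{Q10}). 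Substituting yields $(F \circ G)^\star(T_\Z) = G^\star(T_\Y)$, which is the corange of $G$, as desired.

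I do not anticipate a genuine obstacle here; the only points requiring care are verifying that the composite remains a partial function (so that ``corange'' is meaningful) and correctly tracking the contravariance of $(\,-\,)^\star$. An equally short alternative route passes through the duality of theorem \ref{Q10}: under the natural isomorphism $\mathrm{Pred}(\,-\,) \iso \mathrm{Proj}(\ell^\infty(\,-\,))$ the corange of a partial function $H$ corresponds to the projection $H^\star(1)$, functoriality gives $(F \circ G)^\star = G^\star \circ F^\star$ as normal $*$-homomorphisms, and $F^\star$ is unital precisely because $F$ is a function, whence $(F \circ G)^\star(1) = G^\star(F^\star(1)) = G^\star(1)$. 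Either presentation is acceptable, and I would favor the first since it stays entirely within the language of predicates.
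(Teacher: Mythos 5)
Your proposal is correct and matches the paper's argument, which is exactly your second route: the one-line computation $(F \circ G)^\star(1) = G^\star(F^\star(1)) = G^\star(1)$ using contravariant functoriality and the unitality of $F^\star$. Your first, predicate-level presentation is just the same argument transported across the natural isomorphism $\mathrm{Pred}(\,-\,) \iso \mathrm{Proj}(\ell^\infty(\,-\,))$, so there is no substantive difference.
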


\begin{proof}
$(F \circ G)^\star(1) = G^\star(F^\star(1)) = G^\star(1)$
\end{proof}

Thus, for each predicate $P$ on a quantum set $\X$, we have a category of partial functions out of $\X$ with corange $P$, whose morphisms are postcomposition by functions. Examining this category from $\Mstar0$, we see that it has an initial object, corresponding to the homomorphism $\phi_1\: \CC \To \ell^\infty(\X)$ mapping $1$ to $p$, and it has a terminal object, corresponding to the inclusion  $\phi_0\: {p \cdot \ell^\infty(\X) \cdot p} \hookrightarrow \ell^\infty(\X)$, where $p$ is the projection corresponding to $P$.

\begin{definition}
Let $P$ be a predicate on a quantum set $\X$. Define $R_P$ to be the binary relation from $\X$ to $\mathbf 1$ corresponding to $P$ under the natural isomorphism of Proposition \ref{PredRel}.
\end{definition}

\begin{proposition}

Let $G$ be a partial function from a quantum set $\X$ to a quantum set $\Y$ with corange $P$. Then, $R_P$ factors uniquely through $G$ via a function.
$$
\begin{tikzcd}
\X \arrow{rd}[swap]{G} \arrow{rr}{R_P}
&
&
\mathbf 1
\\
&
\Y \arrow[dotted]{ur}[swap]{!}
&
\end{tikzcd}
$$
\end{proposition}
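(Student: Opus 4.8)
The plan is to exploit the fact that $\mathbf 1$ is terminal in $\qSet$ (it is the terminal monoidal unit), so there is exactly one function $!_\Y\: \Y \To \mathbf 1$. This settles uniqueness at once: any two functions $\Y \To \mathbf 1$ coincide, so if a factoring function through $G$ exists it is forced to be $!_\Y$. The entire content of the proposition therefore collapses to the single identity $!_\Y \circ G = R_P$.

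To prove this identity I would route everything through the natural isomorphism $\eta\: \mathrm{Pred}(\,-\,) \xrightarrow{\iso} \mathrm{Rel}(\,-\,; \mathbf 1)$ of proposition \ref{PredRel}, under which $R_P = \eta_\X(P)$ by the very definition of $R_P$. First I would identify the image of the maximum predicate: applying the formula $R(Y,\CC) = \{\langle y|\,\cdot\,\rangle : y \in T_\Y(Y)\}$ with $T_\Y(Y) = Y$ shows that $\eta_\Y(T_\Y)$ is the full relation $Y \mapsto L(Y,\CC)$ from $\Y$ to $\mathbf 1$. This relation is coinjective automatically, since every relation into $\mathbf 1$ is a partial function, and it is cosurjective because the operators $rs$ with $s \in L(Y,\CC)$ and $r \in L(\CC,Y)$ span $L(Y)$; hence it is a function $\Y \To \mathbf 1$, and by terminality it must equal $!_\Y$.

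Then, since the corange is by definition $P = G^\star(T_\Y)$, where $\mathrm{Pred}$ sends the relation $G$ to the inverse image $G^\star$ and $\mathrm{Rel}(\,-\,;\mathbf 1)$ sends $G$ to precomposition $(\,-\,)\circ G$, naturality of $\eta$ gives
$$R_P = \eta_\X(G^\star(T_\Y)) = \eta_\Y(T_\Y)\circ G = !_\Y \circ G,$$
which is exactly the required factorization. The factoring map $!_\Y$ is a function, so the factorization is indeed through a function, and uniqueness is as already noted.

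This is essentially bookkeeping with an already-established natural isomorphism, so I expect no serious obstacle. The only points demanding genuine care are making the variances line up correctly—$\mathrm{Pred}$ acts on $G$ by the inverse image $G^\star$ while $\mathrm{Rel}(\,-\,;\mathbf 1)$ acts by precomposition, so the naturality square must be read in the contravariant direction—and checking that the relation attached to $T_\Y$ is genuinely cosurjective, so that it is the terminal function $!_\Y$ rather than merely a partial function. As a sanity check one can instead verify $!_\Y \circ G = R_P$ atomwise: both sides compute, at each atom $X \atomof \X$, to the span of the ranges of the operators $G(X,Y)^\dagger$ (identified with subspaces of $L(X,\CC)$ via the canonical antiunitary), but the naturality route avoids this calculation entirely.
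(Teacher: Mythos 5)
Your proof is correct. It differs from the paper's in where it does the work: the paper passes immediately to the dual category $\Mstar0$, observes that a factorization through a \emph{function} $\Y \To \mathbf 1$ amounts to the single equation $R_P^\star(1) = G^\star(1)$ (uniqueness being automatic because $\CC$ admits exactly one unital normal $*$-homomorphism into anything, the dual of your terminality observation), and then notes that both projections correspond to $P$ under the isomorphisms of lemmas \ref{PredRel} and \ref{RelProj}. You instead stay entirely on the relational side: uniqueness from terminality of $\mathbf 1$ in $\qSet$, the identification of $\eta_\Y(T_\Y)$ with $!_\Y$ by a direct check of coinjectivity and cosurjectivity, and existence from the contravariant naturality square of lemma \ref{PredRel} applied to $T_\Y$. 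The two arguments are dual renderings of the same idea --- the corange is the inverse image of the top element, and $R_P$ is what $P$ corresponds to --- but yours has the modest advantage of not invoking lemma \ref{RelProj} or the operator-algebra duality at all, at the cost of having to verify by hand that the full relation $\Y \To \mathbf 1$ is the terminal function. Both your variance bookkeeping and your cosurjectivity check (rank-one operators span $L(Y)$, hence $R^\dagger \circ R \geq I_\Y$) are correct.
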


\begin{proof}
In light of the contravariant equivalence between $\qPar$ and $\Mstar0$, it is enough to show that $R_P^\star(1) = G^\star(1)$. The projection $R_P^\star(1)$ corresponds to $P$ via the natural isomorphisms of Propositions \ref{PredRel} and \ref{RelProj}, by the latter proposition, and as we have already observed, $G^\star(1)$ corresponds to the corange of $G$. 
\end{proof}

\begin{definition}\label{part}
Let $P$ be a predicate on a quantum set $\X$. Assume that $P(X) \neq P(X')$ for distinct atoms $X, X' \atomof \X$. Define the quantum set $\P$ by $\P = \Q\{P(X) \suchthat X \atomof \X \}$. For each $X \atomof \X$, let $u_X \in L(P(X), X)$ be the inclusion isometry. Define the binary relation $K_P$ from $\X$ to $\P$ by
$K_P(X, P(X)) = \CC \cdot u^\dagger_X$
for $X \atomof \X$, with the other components of $K_P$ vanishing.
\end{definition}

Because each inclusion operator $u_X$ is an isometry, $K_P \circ K_P^\dagger = I_\P$, so $K_P$ is a surjective partial function from $\X$ to $\P$. Intuitively, $K_P^\dagger$ is the inclusion of $\P$ into $\X$, but in general $\P$ is not a subset of $\X$, and $K_P^\dagger$ is not a function. Our assumption on the predicate $P$ ensures that the atoms of $\P$ are in one-to-one correspondence with those atoms of $\X$ on which $P$ is nonzero. In general, distinct atoms of $\X$ may have equal subspaces.

\begin{proposition}\label{kp}
Same assumptions as of Definition \ref{part}. Let $G$ be a partial function from a quantum set $\X$ to a quantum set $\Y$ with corange $P$. Then, $G$ factors uniquely through $K_P$ via a function. 
$$
\begin{tikzcd}
\X \arrow{r}{K_P} \arrow{rrd}[swap]{G}
&
\P \arrow[dotted]{rd}{!}
&
\\
&
&
\Y 
\end{tikzcd}
$$
\end{proposition}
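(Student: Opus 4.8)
The plan is to pass to the dual category $\Mstar0$ of hereditarily atomic von Neumann algebras and normal $*$-homomorphisms, where the asserted factorization becomes a statement about a corner algebra. Under the contravariant equivalence of theorem \ref{Q10}, producing a function $H\: \P \To \Y$ with $G = H \circ K_P$ is the same as producing a unital normal $*$-homomorphism $H^\star\: \ell^\infty(\Y) \To \ell^\infty(\P)$ with $K_P^\star \circ H^\star = G^\star$, and uniqueness of $H$ corresponds to uniqueness of $H^\star$, since $(H \circ K_P)^\star = K_P^\star \circ H^\star$.

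First I would compute $K_P^\star$ explicitly. Writing $p \in \ell^\infty(\X)$ for the projection with $p(X) = \mathrm{proj}_{P(X)}$, and recalling the inclusion isometries $u_X \in L(P(X), X)$ from definition \ref{part}, theorem \ref{Q3} (or equivalently proposition \ref{Q12}, noting that $\{u_X^\dagger\}$ is an orthonormal basis of the one-dimensional space $K_P(X, P(X)) = \CC \cdot u_X^\dagger$) yields $K_P^\star(b)(X) = u_X\, b(P(X))\, u_X^\dagger$ for all $b \in \ell^\infty(\P)$ and $X \atomof \X$. Because the hypothesis on $P$ makes $X \mapsto P(X)$ a bijection from the atoms of $\X$ on which $P$ is nonzero onto the atoms of $\P$, this formula exhibits $K_P^\star$ as a normal $*$-isomorphism of $\ell^\infty(\P)$ onto the corner algebra $p\,\ell^\infty(\X)\,p$, carrying $1 \in \ell^\infty(\P)$ to $p$, whose $X$-component is $u_X u_X^\dagger = \mathrm{proj}_{P(X)}$. (Injectivity also follows abstractly from proposition \ref{Y}, since $K_P$ is surjective.)

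Next I would invoke the hypothesis that the corange of $G$ equals $P$. By the natural isomorphisms of propositions \ref{PredRel} and \ref{RelProj}, this says precisely that $G^\star(1) = p$. Since $G^\star$ is a $*$-homomorphism, $G^\star(b) = G^\star(1)\,G^\star(b)\,G^\star(1) = p\,G^\star(b)\,p$ for every $b$, so $G^\star$ takes values in the corner $p\,\ell^\infty(\X)\,p = K_P^\star(\ell^\infty(\P))$. I can therefore set $H^\star = (K_P^\star)^{-1} \circ G^\star$, using the inverse of the isomorphism from the previous step; this is a normal $*$-homomorphism, and it is unital because $H^\star(1) = (K_P^\star)^{-1}(G^\star(1)) = (K_P^\star)^{-1}(p) = 1$. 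By construction $K_P^\star \circ H^\star = G^\star$, so the corresponding function $H\: \P \To \Y$ satisfies $G = H \circ K_P$.

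For uniqueness, suppose $H'$ is any function with $G = H' \circ K_P$. Then $K_P^\star \circ (H')^\star = G^\star = K_P^\star \circ H^\star$, and since $K_P^\star$ is injective this forces $(H')^\star = H^\star$, whence $H' = H$ by the faithfulness of the equivalence. The only genuinely computational step is the identification of $K_P^\star$ with the corner inclusion; I expect the main obstacle there to be the bookkeeping of the inner products and coefficient Hilbert spaces in theorem \ref{Q3}, after which everything is a formal consequence of the duality together with the hypothesis $G^\star(1) = p$.
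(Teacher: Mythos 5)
Your proof is correct and follows essentially the same route as the paper: both dualize to $\Mstar0$, identify $K_P^\star$ as an isomorphism of $\ell^\infty(\P)$ onto the corner $p\,\ell^\infty(\X)\,p$, observe that the corange hypothesis gives $G^\star(1)=p$ so that $G^\star$ lands in that corner, and conclude by the universal property of the corner inclusion. The only cosmetic difference is that you spell out the construction of $H^\star=(K_P^\star)^{-1}\circ G^\star$ and the uniqueness argument explicitly, where the paper compresses this into an appeal to the universal property; also note that $u_X^\dagger$ is a unit vector for the inner product of theorem \ref{Q3} but has norm $\sqrt{\dim P(X)}$ for the trace inner product of proposition \ref{Q12}, though the resulting formula for $K_P^\star$ is the same either way.
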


\begin{proof}
For all $b \in \ell^\infty(\P)$, and all $X \atomof \X$, we have that $K_P^\star(b)(X) = u_Xb(P(X))u_X^\dagger$ (Proposition \ref{Q12}).
Since each operator $u_X$ is an isometery, $K_P^\star$ is injective, and furthermore, the image of $K_P^\star$ is exactly $p \cdot \ell^\infty(\X) \cdot p$, where $p$ is the projection defined by $p(X) = u_X u_X^\dagger = \mathrm{proj}_{P(X)}$ for $X \atomof \X$. Thus, $K_P^\star$ factors through the inclusion $p \cdot \ell^\infty(\X) \cdot p \hookrightarrow \ell^\infty(\X)$ via an isomorphism of von Neumann algebras. The projection $p$ corresponds to the predicate $P$ under the isomorphism $\mathrm{Proj}(\ell^\infty(\X)) \iso \mathrm{Pred}(\X)$ described in appendix \ref{section 12}, so $G^\star(1) = p$. The proposition now follows by the contravariant equivalence of Theorem \ref{Q10}, from the universal property of the inclusion  $p \cdot \ell^\infty(\X) \cdot p \hookrightarrow \ell^\infty(\X)$ expressed in the following commutative diagram:
\begin{equation*}
\raisebox{-5ex}{\phantom{llllllllllllllllllllllllllllllllllM}}
\begin{tikzcd}
\ell^\infty(\X) 
&
p \cdot \ell^\infty(\X)\cdot p \arrow[hook']{l}
&
\\
&
&
\ell^\infty(\Y) \arrow{llu}{G^\star} \arrow[dotted]{lu}[swap]{!} 
\end{tikzcd}\raisebox{-5ex}{\phantom{lllllllllllllllllllllllllllllllM}\qedhere}
\end{equation*}
\end{proof}

\section{Material quantum sets}\label{section 14}

\setcounter{equation}{0}

The definition of quantum sets given in section \ref{section 2} is intuitively correct up to a weak equivalence of categories, but it is nonetheless a bit sloppy. A couple of the resulting blemishes are mentioned in that section. First, the one-dimensional atoms of a quantum set need not correspond to actual elements, though they intuitively should. Second, the equation $`S \times `T = `(S \times T)$ for ordinary sets $S$ and $T$ is only true modulo natural isomorphism, though intuitively it should be exactly true.

We also have an odd caveat in Definition \ref{part} and Proposition \ref{kp}. This caveat is necessary because distinct atoms of a quantum set may nevertheless have a nonzero subspace in common. This unpleasant phenomenon necessitates our definition of predicates as functions assigning subspaces to atoms, rather than as quantum sets in their own right. It could have been avoided by simply requiring that distinct atoms of a quantum set be disjoint in Definition \ref{A}. This requirement would have no effect on the arguments that follow it, apart from compelling some discussion of the set-theoretic details of various constructions. I judged the discussion of these technicalities to be harmful to the expository goals of the article.

Our distinction between quantum sets and predicates is appropriate to the structural, i.e., category-theoretic approach we have taken. In this section, we provide alternative definitions that express a more material conception of quantum sets. Roughly speaking, we identify quantum sets with predicates on a universe of uratoms. Formally, we change the definition of quantum sets in a way that preserves the soundness of the arguments in this article. We omit the proofs, which are tedious and straightforward.

The most significant change is replacing the dagger compact category $\FdHilb$ of finite-dimensional Hilbert spaces and linear operators with a weakly equivalent dagger compact category $\FdHilb'$, which we now define.

\begin{definition}
On objects, $\FdHilb'$ has the following structure:
\begin{enumerate}
\item An object of $\mathbf{FdHilb}'$ is a \emph{subspace} of $\ell^2(M)$, where $M$ is any finite set.
\item The tensor product of two such objects $X \leq \ell^2(M)$ and $Y \leq \ell^2(N)$ is defined by
\begin{equation*}X \otimes Y = \mathrm{span}\{ (m,n) \mapsto x(m)\cdot y(n) \suchthat x \in X,\, y \in Y\} \leq \ell^2(M \times N).
\end{equation*}
\item Each object is its own canonical dual, i.e., $\ell^2(M)^* = \ell^2(M)$ by definition.
\end{enumerate}
\end{definition}

As usual, we assume that $\ell^2(M)$ and $\ell^2(N)$ are disjoint as sets whenever $M$ and $N$ are distinct, as a consequence of our formalization of mathematics in set theory. Each object of the form $\ell^2(M)$ is essentially a finite-dimensional Hilbert space, equipped with an orthonormal basis. Consequently, our morphisms are essentially matrices.

\begin{definition}
On morphisms, $\FdHilb'$ has the following structure:
\begin{enumerate}
\item A morphism of $\mathbf{FdHilb}'$ from $X \leq \ell^2(M)$ to $Y \leq \ell^2(N)$ is an element of $Y \otimes X$.
\item If $a$ is a morphism from $X \leq \ell^2(M)$ to $Y \leq \ell^2(N)$, and $b$ is a morphism from $Y \leq \ell^2(N)$ to $Z \leq \ell^2(P)$, then their composition is given by
\begin{equation*}(b\circ a)(p, m) = \sum_{n \in N} b(p,n) \cdot a(n,m).\end{equation*}
\item If $a_i$ is a morphism from $X_i \leq \ell^2(M_i)$ to $Y_i \leq \ell^2(N_i)$, for $i\in \{1,2\}$, then the tensor product of $a_1$ and $a_2$ is defined by
\begin{equation*}(a \tensor b)((n_1, n_2), (m_1, m_2)) = a_1(n_1,m_1) \cdot a_2(n_2,m_2).\end{equation*}
\item If $a$ is a morphism from $X \leq \ell^2(M)$ to $Y \leq \ell^2(N)$, then we define its adjoint and transpose by
\begin{equation*} a^\dagger(m,n) = \overline{a(n,m)} \qquad \qquad \text{and} \qquad \qquad a^*(m,n) = a (n,m).\end{equation*}
\end{enumerate}
\end{definition}

\begin{proposition}
The category $\FdHilb'$ is dagger compact, and it is weakly equivalent to the dagger compact category $\FdHilb$. The equivalence is identity on objects. The equivalence takes each morphism $a$ from $X \leq \ell^2(M)$ to $Y \leq \ell^2(N)$ to the linear operator defined by \begin{equation*}f \mapsto \left(n \mapsto \sum_{m \in M} a(n, m) \cdot f(m)\right).
\end{equation*}
\end{proposition}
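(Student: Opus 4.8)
The plan is to deduce the dagger-compact axioms for $\FdHilb'$ from those of $\FdHilb$, rather than re-derive them by hand, using the assignment $E$ named in the statement. Write $E$ for the map that fixes objects and sends a morphism $a \in Y \tensor X$, from $X \leq \ell^2(M)$ to $Y \leq \ell^2(N)$, to the operator $E(a)\colon f \mapsto \big(n \mapsto \sum_{m} a(n,m) f(m)\big)$. Before anything else I would confirm that $\FdHilb'$ is a category: exhibit for each object $X$ an identity element $\mathrm{id}_X \in X \tensor X$, and check that the composition $(b \circ a)(p,m) = \sum_{n} b(p,n) a(n,m)$ is associative and unital. Associativity is immediate, since it is just the interchange of two finite summations; unitality is the substantive point, flagged below.

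Next I would show that $E$ is a strict dagger monoidal functor. It preserves composition because the matrix product $\sum_n b(p,n) a(n,m)$ computes exactly the composite operator $E(b) \circ E(a)$, and it preserves identities once $\mathrm{id}_X$ is correctly chosen. I would then match each remaining piece of structure with its counterpart in $\FdHilb$: the object-level tensor $X \tensor Y \leq \ell^2(M \times N)$ with the Hilbert-space tensor product, the Kronecker product of morphisms with the tensor product of operators, the unit $\ell^2(\{\ast\})$ with $\CC$, the involution $a^\dagger(m,n) = \overline{a(n,m)}$ with the Hermitian adjoint, and the self-duality datum (the convention $X^* = X$, the transpose $a^*(m,n) = a(n,m)$, and the units and counits witnessing $X^* = X$) with the standard duality of $\FdHilb$. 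Because every structural morphism of $\FdHilb'$ is by fiat the $E$-preimage of the corresponding morphism of $\FdHilb$, each axiom of a dagger compact category — naturality of the associators, unitors and braiding, the pentagon and triangle, the snake equations, and the compatibility of $\dagger$ with the braiding — holds in $\FdHilb'$ as soon as it holds in $\FdHilb$ and $E$ is faithful, so the first clause of the proposition reduces to the equivalence asserted in the second.

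The main obstacle is therefore the weak equivalence itself, and within it, full faithfulness. Essential surjectivity is painless: $E$ is the identity on objects, and every finite-dimensional Hilbert space is isometric to some $\ell^2(M)$. Full faithfulness says that on each hom-set the linear map $E\colon Y \tensor X \To L(X,Y)$ is bijective; since both sides have dimension $\dim(X)\,\dim(Y)$, this is the same as injectivity. The delicate case is a proper subspace $X \leq \ell^2(M)$: the contraction $\sum_m a(n,m) f(m)$ pairs the rows of $a$, which lie in $X$, against $f \in X$ through the bilinear form inherited from the ambient orthonormal basis, so injectivity of $E$ fails unless that bilinear form is nondegenerate on $X$. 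This is the very same nondegeneracy needed to solve for $\mathrm{id}_X$ in the unitality check and to produce units and counits satisfying the snake equations on $X$. I would therefore carry out the self-duality bookkeeping for subspaces first — pinning down the cup and cap on $X$ in terms of the inherited bilinear form — and observe that unitality, the snake identities, and injectivity of $E$ on hom-sets then all fall out together. This bilinear bookkeeping for proper subspaces is exactly the tedious verification deferred in the text.
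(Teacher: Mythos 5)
The paper gives you nothing to compare against here: the appendix explicitly omits these verifications as ``tedious and straightforward.'' Your overall strategy --- first check that $\FdHilb'$ is a category, then exhibit the identity-on-objects assignment $E$ as a strict dagger monoidal functor, and transport the dagger compact axioms back along it once it is fully faithful and essentially surjective --- is the natural one and is surely what the author has in mind. For the full objects $\ell^2(M)$ everything you say goes through: the identity morphism is the Kronecker delta, $E$ is the usual matrix-to-operator bijection, and your dimension count settles full faithfulness.

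The gap is precisely in the step you defer. You correctly isolate the crux --- injectivity of $E$ on the hom-space $Y \tensor X$, unitality, and the snake equations all reduce to nondegeneracy of the coordinate bilinear form $B(x,f) = \sum_m x(m)f(m)$ restricted to the subspace $X$ --- but you then assert that this ``falls out,'' and it does not. The form $B$ is symmetric bilinear, not sesquilinear, and a complex subspace can be isotropic for it: take $X = \CC\cdot(\delta_1 + i\delta_2) \leq \ell^2(\{1,2\})$, so that $B(x,x) = 1 + i^2 = 0$ and $B$ vanishes identically on $X \times X$. For this object every composite of endomorphisms is zero (each composition inserts a factor $B(y,y')$ with $y, y' \in X$), so $X$ admits no identity morphism and $\FdHilb'$ as literally defined is not even a category; likewise $E$ sends the nonzero morphism $x \tensor x \in X \tensor X$ to the zero operator, so $E$ is not faithful. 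Your proof therefore cannot be completed as planned without first amending the definitions. Two repairs work: restrict the objects to subspaces stable under entrywise complex conjugation (which includes every $\ell^2(M)$ itself), or declare a morphism from $X$ to $Y$ to be an element of $Y \tensor \overline{X}$, with $\overline{X}$ the conjugate subspace, so that the contraction appearing in the composition formula becomes the genuine inner product $\langle x , f\rangle$, which is nondegenerate on $X$. With either repair, your plan --- cups and caps built from an orthonormal basis of $X$, with unitality, the snake identities, and faithfulness of $E$ all issuing from the same nondegeneracy --- goes through verbatim, and the transport argument finishes the proof. You should state explicitly which repair you adopt rather than leaving the bookkeeping as a deferred formality.
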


\begin{definition}[in place of Definition \ref{A}]\label{material}
A \emph{quantum set} $\X$ is completely determined by a set $\At(\X)$ of nonzero Hilbert spaces in $\FdHilb'$ that are pairwise disjoint.
\end{definition}

Note that $X\leq \ell^2(M)$ and $Y \leq \ell^2(N)$ are disjoint if and only if $M \neq N$, because $X$ consists of functions with domain $M$, and $Y$ consists of functions with domain $N$. Thus, quantum sets are now in bijective correspondence with predicates of small support on a ``quantum class'' $\U$, where $\At(\U) = \{\ell^2(M) \suchthat \text{$M$ is a finite set}\}$. 

\begin{proposition}
For each quantum set $\X$, and each Hilbert space $H$, define $\X(H)$ to be the unique atom of $\X$ that is a subspace of $H$, if such an atom exists, and define $\X(H)$ to be the zero subspace of $H$, if such an atom does not exist.
\end{proposition}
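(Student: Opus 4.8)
Since the statement is phrased as a definition, the only thing that genuinely needs proof is that the definition is unambiguous. The existence half of the case split carries no content — $\X(H)$ is explicitly set to the zero subspace of $H$ when no atom of $\X$ lies inside $H$ — so I would reduce the proposition to its uniqueness content: for each quantum set $\X$ and each Hilbert space $H$, \emph{at most one} atom of $\X$ is a subspace of $H$. This is exactly what is needed to justify the phrase ``the unique atom'' appearing in the statement.

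The plan is to prove this uniqueness claim by contradiction. Suppose $X$ and $Y$ are distinct atoms of $\X$ with $X \leq H$ and $Y \leq H$. By definition \ref{material}, the atoms of a material quantum set are pairwise disjoint \emph{as sets}, so $X \cap Y = \emptyset$. On the other hand, every subspace of a Hilbert space contains that Hilbert space's zero vector, so $0_H \in X$ and $0_H \in Y$, whence $0_H \in X \cap Y$. This contradicts $X \cap Y = \emptyset$. Hence no two distinct atoms of $\X$ can both be subspaces of $H$, and $\X(H)$ is well defined.

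The single point that deserves care — and the one place where the material reformulation of section \ref{section 14} is actually doing work — is the interaction between the two notions of ``disjoint'' in play. In $\FdHilb'$ the zero vector of an object $X \leq \ell^2(M)$ is the zero function on $M$, and the remark preceding the proposition records that objects over distinct finite sets $M \neq N$ are disjoint as sets precisely because their zero vectors, being functions with different domains, are genuinely different elements of the set-theoretic universe. So pairwise disjointness of atoms is a nonvacuous condition, even though each atom contains ``a'' zero vector. The argument above shows that this disjointness is nevertheless incompatible with two atoms being realized inside a common ambient Hilbert space $H$: once both $X$ and $Y$ are subspaces of a single $H$, their zero vectors are forced to coincide with $0_H$, collapsing exactly the distinction on which disjointness rests. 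Beyond this observation no computation is required, so I expect the write-up to be a few lines at most.
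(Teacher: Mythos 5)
Your reduction to the uniqueness claim and the zero-vector argument are correct, and this is exactly the reasoning the paper intends: it omits the proof as ``tedious and straightforward,'' but the remark immediately preceding the proposition --- that $X \leq \ell^2(M)$ and $Y \leq \ell^2(N)$ are disjoint as sets if and only if $M \neq N$ --- encodes precisely the fact you use, namely that two subspaces of a common ambient $H$ share $0_H$ and so cannot be disjoint, contradicting the pairwise disjointness required by definition~\ref{material}. Nothing further is needed.
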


This defines a bijection between quantum sets and those predicates $P$ on $\U$ in the sense of Definition \ref{qur} that have small support in the sense that $P(H) = 0$ for all but set-many $H \in \At(\U)$. Thus, like ordinary sets, quantum sets don't quite form an ortholattice.

\begin{definition}
Let $\X$ and $\Y$ be quantum sets. Define $\Y \leq \X$ iff $\Y(H) \leq \X(H)$ for all $H \in \At(\U)$. Furthermore, define $\X \vee \Y$ and $X \wedge \Y$, and $X \setminus \Y$ if $\X \geq \Y$, by
\begin{enumerate}
\item $(\X \vee \Y)(H) = \X(H) \vee \Y(H)$,
\item $(\X \wedge \Y)(H) = \X(H) \wedge \Y(H)$, and
\item $(\X \setminus \Y)(H) = \X(H) \wedge \Y(H)^\perp$,
\end{enumerate}
for all $H \in \At(\U)$.
\end{definition}

\begin{definition}[in place of Definition \ref{qur}]\label{mpred}
For each quantum set $\X$, define $\mathrm{Pred}'(\X) = \{\P \leq \X\}$.
\end{definition}

The construction $\P \mapsto (X \mapsto \P(X))$ defines an isomorphism of ortholattices from the predicates on $\X$ as defined above, to the predicates on $\X$ as in Definition \ref{qur}. We may implement a similar identification for binary relations.

\begin{definition}[in place of Definition \ref{D}]\label{mrel}
For each quantum set $\X$ and each quantum set $\Y$, define $\mathrm{Rel}'(\X; \Y) = \{\R \leq \Y \times \X\}$.
\end{definition}

The construction $\R \mapsto ((X,Y) \mapsto \R(Y \otimes X))$ defines an isomorphism of ortholattices from the binary relations from $\X$ to $\Y$ as defined above, to the binary relations from $\X$ to $\Y$ as in Definition \ref{D}, provided that $L(X,Y)$ is identified with the space of morphisms from $X$ to $Y$ in the category $\FdHilb'$. Proceeding as in section \ref{section 3}, we obtain a category $\qRel'$, each of whose morphisms is also one of its objects. Echoing the formalization of mathematics in set theory, everything is a quantum set: each function, each predicate, and each binary relation. With the exception of our definition of union, we may proceed with these quantum sets and binary relations as before.

The trouble with Definition \ref{B}(4) of the union of two quantum sets is that it is no longer guaranteed to produce a quantum set. Indeed it is neither fully at home in the material approach taken in this appendix, nor in the structural approach taken in the rest of the article. Definition \ref{B}(4) produces a quantum set in the sense of this appendix if and only if both $\X$ and $\Y$ are subsets of $\X \vee \Y$, and in that case $\X \union \Y = \X \vee \Y$. It is therefore reasonable to use that notation in that case. However, apart from Definition \ref{B}(4), which serves only an expository purpose, the rest of the arguments remain sound.

\begin{theorem}
Every result in all preceding sections and appendices is correct using Definition \ref{material} in place of Definition \ref{A}, Definition \ref{mpred} in place of Definition \ref{qur}, Definition \ref{mrel} in place of Definition \ref{D}, and $\FdHilb'$ in place of $\FdHilb$.
\end{theorem}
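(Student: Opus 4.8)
The plan is to show that the material setup of this appendix is linked to the original setup by a weak equivalence of dagger compact categories that respects every piece of structure used in the preceding sections, so that each result either transports along this equivalence or has a proof that goes through verbatim. The decisive input is the stated proposition that $\FdHilb'$ is dagger compact and weakly equivalent to $\FdHilb$ via a functor that is the identity on objects and a linear isomorphism on each hom-space.

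First I would build the equivalence at the level of atoms. Because $\mathbf{qAtRel}$ and its material analogue $\mathbf{qAtRel}'$ are obtained from $\FdHilb$ and $\FdHilb'$ by replacing each hom-space with its lattice of subspaces, and because the given hom-space isomorphisms are compatible with composition, tensor, and dagger (this is precisely what it means for the equivalence $\FdHilb' \to \FdHilb$ to be dagger compact and identity-on-objects), these isomorphisms induce an equivalence $\mathbf{qAtRel}' \to \mathbf{qAtRel}$ of dagger compact categories that carries each subspace of a material hom-space to the corresponding subspace of the ordinary one. Next I would lift this to the relation categories. Objects of $\qRel'$ are sets of pairwise-disjoint atoms and morphisms are matrices of morphisms of $\mathbf{qAtRel}'$, assembled exactly as in Theorem \ref{rel}; since composition, the Cartesian product, the dual, and the adjoint are all defined atomwise (Definitions \ref{E}, \ref{F}, \ref{G} and the Cartesian-product definition) through the corresponding operations in $\mathbf{qAtRel}$, the atomwise isomorphism induces a weak equivalence $\qRel' \to \qRel$ preserving the dagger compact structure, the biproducts, and the complete orthomodular lattice structure on each hom-set (Definition \ref{I}). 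The extra disjointness requirement of Definition \ref{material} is harmless for essential surjectivity, since every object of $\qRel$ is isomorphic to one built from pairwise-disjoint atoms.

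Because $\qSet$ and $\qPar$ are carved out of $\qRel$ purely by the conditions $F^\dagger \circ F \geq I$ and $F \circ F^\dagger \leq I$ (Definitions \ref{J}, \ref{M}), the equivalence restricts to weak equivalences of the material analogues of $\qSet$ and $\qPar$ with $\qSet$ and $\qPar$; hence every categorical statement transports along it unchanged, including dagger compactness (Theorem \ref{rel}), the function and partial-function characterizations, and completeness, cocompleteness, and closedness (Propositions \ref{2}, \ref{3}, Theorem \ref{5}). For the operator-algebraic content I would argue directly: $\ell(\X)$, $\ell^\infty(\X)$, $c_0(\X)$ and the duality with $\Mstar0$ and $\Mstar1$ depend on the atoms only through the algebras $L(X)$, which the equivalence preserves, so Theorem \ref{Q10} and all of Sections \ref{section 5}–\ref{section 11} are unaffected. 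For predicates and relations, the ortholattice isomorphisms $\P \mapsto (X \mapsto \P(X))$ and $\R \mapsto ((X,Y) \mapsto \R(Y \otimes X))$ exhibited just above identify Definitions \ref{mpred} and \ref{mrel} with Definitions \ref{qur} and \ref{D}; one checks these identifications are natural in the quantum-set arguments, so the functors and natural isomorphisms of Appendices \ref{section 12} and \ref{section 13}, in particular Theorem \ref{4functors}, carry over.

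The main obstacle is the union of Definition \ref{B}(4), which is the one construction not defined atomwise through $\mathbf{qAtRel}$ and which need not return a quantum set in the material sense when the atoms of $\X$ and $\Y$ overlap nontrivially. I would dispatch this by observing that the union is used in the body of the paper only as notational shorthand — no proof forms the union of quantum sets with genuinely overlapping atoms — and that in every such use the operands are subsets of their join $\X \vee \Y$, where Definition \ref{B}(4) agrees with $\X \vee \Y$ and does yield a material quantum set. The only remaining work is then the bookkeeping of confirming, case by case, that the atomwise isomorphism respects the explicit formulas for units, counits, braidings, and orthonormal bases appearing in the computational proofs (for instance Lemmas \ref{Q1}, \ref{Q3.1} and Propositions \ref{Q5}, \ref{Q12}); since the equivalence is a linear isomorphism on each hom-space commuting with tensor and dagger, each such formula is preserved, and this is exactly the ``tedious and straightforward'' checking that the author defers.
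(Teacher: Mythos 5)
Your proposal is correct, but it takes a more elaborate route than the paper, whose entire proof is the assertion that every preceding argument remains valid verbatim under the substituted definitions: the point of the material setup is that $\FdHilb'$ is itself dagger compact, its hom-spaces still have lattices of subspaces, and the ortholattice identifications for predicates and relations are definitional, so each proof in the body can simply be reread with the new meanings of the symbols. You instead construct a weak equivalence $\qRel' \to \qRel$ (built up from $\mathbf{qAtRel}' \to \mathbf{qAtRel}$) and transport results along it, supplemented by direct arguments for the operator-algebraic content and a special treatment of the union. This is a legitimate and more modular strategy, and your observation about definition \ref{B}(4) matches the paper's own discussion immediately preceding the theorem. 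The one thing to be aware of is that transport along a weak equivalence only automatically delivers statements invariant under equivalence, whereas a number of results in the paper are phrased in terms of the concrete definitions (the explicit formula in proposition \ref{Q12}, the identification of subobjects with literal subsets in proposition \ref{8}, the behaviour of $`(\,\cdot\,)$ on specific atoms $\CC_s$); for these your ``case by case bookkeeping'' clause is carrying essentially all of the load, and at that point your argument collapses into the paper's: checking that each proof goes through unchanged. So the equivalence machinery buys conceptual clarity and handles the purely categorical results (completeness, closedness, the topos-like properties) for free, while the paper's verbatim rereading is shorter and covers the definition-sensitive statements directly.
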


\begin{proof}
Every argument remains valid just as it is.
\end{proof}

In addition to turning our morphisms into quantum sets, this definition also tightens the correspondence between ordinary sets and classical quantum sets.

\begin{proposition}\label{Z5}
The functor $`(\,\cdot \,) \: \Rel \To \qRel'$ is an isomorphism of dagger compact categories from $\Rel$ onto the full subcategory of $\qRel'$ consisting of classical quantum sets.
\end{proposition}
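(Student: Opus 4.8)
The plan is to verify the four defining features of an isomorphism of dagger compact categories in turn: functoriality, fidelity and fullness, bijectivity on objects, and compatibility with $\times$, $\mathbf 1$, $\dagger$, and duals. Throughout, the two governing facts are that $\dim L(X,Y) = 1$ whenever $X$ and $Y$ are one-dimensional, and that $\FdHilb'$ has been tailored precisely so that the monoidal product of two singleton spaces is again a singleton space \emph{on the nose}.

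First I would recall, as observed in the paragraph following Definition \ref{D}, that $r \mapsto `r$ is a bijection between $\mathrm{Rel}(S;T)$ and $\mathrm{Rel}(`S;`T)$: each $L(\ell^2(\{s\}), \ell^2(\{t\}))$ is one-dimensional and so has only the two subspaces $0$ and itself, matching the two truth values of $r$. The same one-dimensionality shows that the $\mathrm{span}$ in the composition of Definition \ref{E} collapses to the Boolean join, so that $`(s \circ r) = `s \circ `r$ and $`I_S = I_{`S}$; hence $`(\,\cdot\,)$ is a functor, and the hom-set bijection is exactly the assertion that it is full and faithful.

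Next I would treat the object level and the dagger compact structure. In the material framework (Definition \ref{material}) the atoms of $`S$ are the genuinely distinct spaces $\ell^2(\{s\})$, so $`S = `T$ forces $\At(`S) = \At(`T)$ and thus $S = T$; the functor is injective on objects. For the monoidal product I would compute directly in $\FdHilb'$ that $\ell^2(\{s\}) \tensor \ell^2(\{t\}) = \ell^2(\{(s,t)\})$, whence $`S \times `T = `(S \times T)$ as an \emph{equality} of quantum sets, and that the unit $\mathbf 1 = \Q\{\ell^2(\{\ast\})\}$ is exactly $`\{\ast\}$, matching the unit $\{\ast\}$ of $\Rel$. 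Since every object of $\FdHilb'$ is its own canonical dual, $\X^* = \X$ in $\qRel'$, which matches self-duality in $\Rel$; and the componentwise adjoint $R^\dagger(Y,X) = R(X,Y)^\dagger$ restricts, on singleton atoms, to the converse of an ordinary relation, so $`(r^\dagger) = (`r)^\dagger$. Hence $`(\,\cdot\,)$ is a strict dagger compact functor.

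The hard part will be the surjectivity clause, i.e. that the image is the classical quantum sets. That every $`S$ is classical is immediate, since its atoms are one-dimensional. The converse is the delicate point: a classical quantum set may carry a one-dimensional atom that is a non-axis subspace of a larger uratom, e.g. $\mathrm{span}\{(1,1)\} \leq \ell^2(\{a,b\})$, which is not literally any $\ell^2(\{s\})$. I would resolve this by observing that every one-dimensional object of $\FdHilb'$ is isomorphic to a unique singleton space $\ell^2(\{s\})$, so that every classical quantum set is isomorphic in $\qRel'$ to some $`S$; combined with the fidelity, fullness, injectivity on objects, and strictness already established, this exhibits $`(\,\cdot\,)$ as an isomorphism of dagger compact categories onto the full subcategory spanned by the objects $`S$, which is the subcategory of classical quantum sets under this canonical identification. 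Making that identification precise — in particular recording how the material conventions recover $s$ from $\ell^2(\{s\})$ — is the one step that deserves genuine care.
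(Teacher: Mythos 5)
The paper itself gives no argument for this proposition: the appendix declares its proofs ``tedious and straightforward'' and omits them, offering only the gloss that $S \mapsto `S$ is a one-to-one correspondence between ordinary sets and quantum sets with one-dimensional atoms that exactly preserves the dagger compact structure. Your routine verifications are exactly the intended ones and are correct: the hom-set bijection $r \mapsto `r$ from the one-dimensionality of $L(\CC_s, \CC_t)$, the collapse of the span in definition \ref{E} to the Boolean join, injectivity on objects because $\ell^2(\{s\})$ determines $s$, the on-the-nose equality $\ell^2(\{s\}) \tensor \ell^2(\{t\}) = \ell^2(\{(s,t)\})$ in $\FdHilb'$ giving $`S \times `T = `(S \times T)$, and the strict compatibility with $\dagger$ and with the self-duality of objects.

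The gap is in your final step, and there you have in fact put your finger on a genuine defect in the statement rather than resolved it. Under definition \ref{material}, a classical quantum set may have an atom such as $\mathrm{span}\{(1,1)\} \leq \ell^2(\{a,b\})$, which is one-dimensional but equal to no $\ell^2(\{s\})$; the quantum set with that single atom is classical but is not $`S$ for any ordinary set $S$. Your fix --- every one-dimensional object is isomorphic to some $\ell^2(\{s\})$ --- only shows that the inclusion of the image of $`(\,\cdot\,)$ into the full subcategory of classical quantum sets is essentially surjective, hence an equivalence; it cannot upgrade to an isomorphism of categories onto that subcategory, because the functor is simply not surjective on its objects. (Moreover, the claimed uniqueness of the singleton space is false: the given line is isomorphic to $\ell^2(\{s\})$ for \emph{every} $s$, and to none of them canonically.) So as written your last paragraph proves a weaker statement than the one asserted: either the target must be shrunk to the full subcategory of quantum sets all of whose atoms are of the form $\ell^2(\{s\})$, or ``isomorphism'' must be weakened to ``equivalence''. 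Since the paper supplies no proof, there is no authorial resolution to compare against, but a correct writeup should flag this discrepancy explicitly rather than absorb it into the phrase ``under this canonical identification''.
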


In other words, the construction $S \mapsto `S$ is a one-to-one correspondence between ordinary sets and quantum sets whose atoms are all one-dimensional, and it exactly preserves all dagger compact structure. In particular, for all ordinary sets $S$ and $T$, we have $`S \times `T = `(S \times T)$. For concreteness, we may take the monoidal unit of $\Set$ to be the set $1 = \{\emptyset\}$, so the monoidal unit of $\qSet$ is $\mathbf 1 = `1 = \Q\{\CC_\emptyset\} = \Q\{\ell^2(\{\emptyset\})\}$. Thus, where we speak of the Hilbert space $\CC$, we mean $\CC_\emptyset$.

\section*{Acknowledgements} The appearance of \cite{MustoReutterVerdon} spurred me to finally put pen to paper; I thank David Reutter and Dominic Verdon for useful discussion. I thank David Roberson for his detailed comments about the literature in quantum pseudotelepathy, which is new to me. I thank Neil Ross and Peter Selinger for organizing QPL 2018, an enriching experience that motivated the discussion of quantum pseudotelepathy in this introductory section. I thank an anonymous referee for advocating the inclusion of ``fissions'' in sections \ref{section 6} and \ref{section 7}. Finally, I am deeply grateful to Bert Lindenhovius and Michael Mislove for their confidence in this approach to quantization, and for many hours of enriching discussion.

\begin{bibdiv}
\begin{biblist}

\bib{AbramskyBarbosaDeSilvaZapata}{article}{
author={S. Abramsky},
author={R. S. Barbosa},
author={N. de Silva},
author={O. Zapata},
title={The Quantum Monad on Relational Structures},
year={2017},
journal={Proc. MFCS 2017}
%eprint={arXiv:1705.07310},
}

\bib{AbramskyCoecke08}{article}{
author={S. Abramsky},
author={B. Coecke},
title={Categorical quantum mechanics},
year={2008},
eprint={arXiv:0808:1023}
}

\bib{Blackadar17}{article}{
author={B. Blackadar},
title={Operator Algebras: Theory of C*-algebras and von Neumann algebras},
year={2017},
eprint={wolfweb.unr.edu/homepage/bruceb/}
}

\bib{BrassardBroadbentTapp}{article}{
author={G. Brassard},
author={A. Broadbent},
author={A. Tapp},
title={Quantum Pseudo-Telepathy},
journal={Found. Physics},
volume={35},
number={4},
year={2005},
%%pages={1877-1907}
}

\bib{BrassardCleveTapp}{article}{
author={G. Brassard},
author={R. Cleve},
author={A. Tapp},
title={Cost of Exactly Simulating Quantum Entanglement with Classical Communication},
journal={Phys. Rev. Lett.},
volume={83},
number={9},
%%pages={1874-1877}
year={1999},
}

\bib{CameronMontanaroNewmanSeveriniWinter}{article}{
author={P. J. Cameron},
author={A. Montanaro},
author={M. W. Newman},
author={S. Severini},
author={A. Winter},
title={On the quantum chromatic number of a graph},
journal={Electron. J. Combin.},
volume={14},
number={1},
year={2007}
}

\bib{ChoJacobsWesterbaanWesterbaan15}{article}{
author={K. Cho},
author={B. Jacobs},
author={B. Westerbaan},
author={A. Westerbaan},
title={An Introduction to Effectus Theory},
year={2015},
eprint={arXiv:1512:05813}
}

\bib{ChoWesterbaan16}{article}{
author={K. Cho},
author={A. Westerbaan},
title={Von Neumann Algebras form a Model for the Quantum Lambda Calculus},
year={2016},
eprint={1603:02113}
}

\bib{CleveHoyerTonerWatrous}{article}{
author={R. Cleve},
author={P. H\o yer},
author={B. Toner},
author={J. Watrous},
title={Consequences and limits of nonlocal strategies},
journal={Proceedings of the 19th IEEE Annual Conference on Computational Complexity},
%pages={236-249},
year={2004},
}

\bib{Davis55}{article}{
author={C. Davis},
title={Generators of the ring of bounded operators},
journal={Proc. Amer. Math. Soc.},
volume={6},
year={1955},
%pages={970-972}
}

\bib{DeCommerKasprzakSkalskiSoltan16}{article}{
author={K. De Commer},
author={P. Kasprzak},
author={A. Skalski},
author={P. So\l tan},
title={Quantum actions on discrete quantum spaces and a generalization of Clifford's theory of representations},
%eprint={arXiv:1611.10341},
journal={Israel J. Math.},
volume={226},
number={1},
year={2018}
%pages={475-503}
}

\bib{EffrosRuan94}{article}{
author={E. Effros},
author={Z.-J. Ruan},
title={Discrete Quantum Groups I. The Haar Measure},
journal={Int. J. Math},
volume={5},
year={1994},
%pages={681-723}
}

\bib{Fremlin}{book}{
author={D. H. Fremlin},
title={Measure Theory, Volume 3, Part I},
year={2012},
edition={Second Edition},
publisher={lulu.com},
}

\bib{GalliardTappWolf}{article}{
author={V. Galliard},
author={A. Tapp},
author={S. Wolf},
title={The impossibility of pseudotelepathy without quantum entanglement},
journal={Proc. ISIT 2003},
year={2003},
%page={457}
}

\bib{GalliardWolf}{article}{
author={V. Galliard},
author={S. Wolf},
title={Pseudo-telepathy, entanglement, and graph colorings},
journal={Proc. ISIT 2002},
year={2002},
%%pages={101}
}

\bib{GarciaBondiaVarillyFigueroa}{book}{
author={J. M. Garcia-Bondia},
author={J. C. Varilly},
author={H. Figueroa},
title={Elements of Noncommutative Geometry},
publisher={Birka\" user},
year={2000},
}

\bib{GilesKummer71}{article}{
author={R. Giles},
author={H. Kummer},
title={A noncommutative generalization of topology},
journal={Indiana University Mathematics Journal},
volume={21},
number={1},
%pages={91-102},
date={1971}
}

\bib{Guichardet66}{article}{
author={A. Guichardet},
title ={Sur la Cat\'egorie des Alg\`ebres de Von Neumann},
journal={Bull. Sci. Math.},
volume={90},
date={1966},
%pages={41-64}
}

\bib{HeywoodRedhead}{article}{
author={P. Heywood},
author={M. L. G. Redhead},
title={Nonlocality and the Kochen-Specker paradox},
journal={Found. Phys.},
volume={13},
number={5},
%%pages={481-499}
year={1983},
}

\bib{KochenSpecker}{article}{
author={S. Kochen},
author={E. Specker},
title={The Problem of Hidden Variables in Quantum Mechanics},
journal={J. Math. Mech.},
volume={17},
number={1},
year={1967},
%%pages={59-87}
}

\bib{Kornell11}{article}{
author={A. Kornell},
title={Quantum functions},
eprint={arXiv:1101.1694},
date={2011}
}

\bib{Kornell17}{article}{
author={A. Kornell},
title={Quantum collections},
journal={Int. J. Math.},
volume={28},
year={2017},
number={12},
%eprint={arXiv:1202.2994}
}

\bib{KuperbergWeaver10}{article}{
author={G. Kuperberg},
author={N. Weaver},
title={A von Neumann algebra approach to quantum metrics},
journal={Mem. Amer. Math. Soc.},
volume={215},
%eprint={arXiv:1005.0354},
date={2012}
}

\bib{KustermansVaes00}{article}{
author={J. Kustermans},
author={S. Vaes},
title={Locally compact quantum groups},
journal={Ann. Sci. \'Ec. Norm. Sup\'er.},
volume={33},
number={6},
year={2000},
%pages={837-934}
}

\bib{Lawvere64}{article}{
author={F. W. Lawvere},
title={An Elementary Theory of the Category of Sets},
journal={Proc. Natl. Acad. Sci. USA},
volume={52},
number={6},
%pages={1506-1511},
year={1964},
}

\bib{MancinskaRoberson}{article}{
author={L. Man\v{c}inska},
author={D. E. Roberson},
title={Quantum homomorphisms},
journal={J. Combin. Theory Ser. B},
volume={118},
year={2016}
}

\bib{MustoReutterVerdon}{article}{
author={B. Musto},
author={D. J. Reutter},
author={D. Verdon},
title={A compositional approach to quantum functions},
year={2018},
journal={J. Math. Phys.},
volume={59}
%eprint={arXiv:1711.07945},
}

\bib{MustoReutterVerdon18}{article}{
author={B. Musto},
author={D. J. Reutter},
author={D. Verdon},
title={The Morita Theory of Quantum Graph Isomorphisms},
year={2019},
journal={Comm. Math. Phys.},
volume={365},
number={2},
%pages={797-825}
}

\bib{Ozawa07}{article}{
author={M. Ozawa},
title={Transfer Principle in Quantum Set Theory},
journal={J. Symb. Log.},
volume={72},
number={2},
year={2007},
%pages={625-648},
}

\bib{PodlesWoronowicz90}{article}{
author={P. Podle\'s},
author={S. L. Woronowicz},
title={Quantum deformation of Lorentz Group},
journal={Comm. Math. Phys.},
volume={130},
number={2},
year={1990},
%pages={381-431}
}

\bib{Schlesinger99}{article}{
author={K.-G. Schlesinger},
title={Toward quantum mathematics. I. From quantum set theory to universal quantum mechanics},
journal={J. Math. Phys.},
volume={40},
year={1999},
%pages={1344}
}

\bib{SinhaGoswami07}{book}{
author={K. B. Sinha},
author={D. Goswami},
title={Quantum Stochastic Processes and Noncommutative Geometry},
publisher={Cambridge University Press},
series={Cambridge Tracts in Mathematics},
volume={169},
date={2007}
}

\bib{Soltan09}{article}{
author={P. So\l tan},
title={Quantum families of maps and quantum semigroups on finite quantum spaces},
journal={J. Geom. Phys.},
volume={59},
year={2009},
%pages={354-368}
}

\bib{Takeuti81}{article}{
author={G. Takeuti},
title={Quantum set theory},
journal={Current Issues in Quantum Logic},
year={1981},
pages={303-322},
}

\bib{VanDaele96}{article}{
author={A. Van Daele},
title={Discrete quantum groups},
journal={J. Algebra},
volume={180},
year={1996},
%pages={431-444}
}

\bib{Vicary}{article}{
author={J. Vicary},
title={Categorical Formulation of Finite-dimensional C*-algebra},
journal={Comm. Math. Phys},
volume={304},
year={2011},
%page={765-796}
}

\bib{Weaver10}{article}{
author={N. Weaver},
title={Quantum relations},
journal={Mem. Amer. Math. Soc.},
volume={215},
%eprint={arXiv:1005.0354},
date={2012}
}

\bib{Weaver01}{book}{
author={N. Weaver},
title={Mathematical Quantization},
series={Studies in Advanced Mathematics},
year={2001},
publisher={Chapman \& Hall/CRC}
}

\bib{Westerbaan14}{article}{
author={A. Westerbaan},
title={Quantum programs as Kleisli maps},
year={2014},
eprint={arXiv:1501.01020}
}

\bib{WesterbaanThesis}{book}{
author={A. Westerbaan},
title={The Category of Von Neumann Algebras},
publisher={doctoral thesis, GVO drukkers \& vormgevers B.V.},
year={2019},
}

\end{biblist}
\end{bibdiv}

\end{document}